\documentclass{m2an}

\usepackage{amsmath,amsfonts,amssymb,mathtools}
\usepackage{graphicx,subfigure}
\usepackage{braket,dsfont,url,enumerate}

\usepackage{pdfpages} 
\usepackage{cmap}
\usepackage{hyperref}
\usepackage{color,dsfont,braket}
\usepackage{bbm}
\usepackage[headings]{fullpage}
\usepackage{fullpage}
\usepackage{fancyhdr}
\usepackage{times}
\usepackage{cmap}
\usepackage{pgf}
\usepackage{comment}
\usepackage{todonotes}

\usepackage{tikz}
\usepackage{pgfplotstable}

\setlength{\marginparwidth}{2cm}

\renewcommand{\u}{\mathbf{u}}
\newcommand{\uu}{\mathbf{u}}

\newcommand{\ww}{\mathbf{w}}
\newcommand{\vv}{\mathbf{v}}
\newcommand{\oo}{\mathbf{0}}

\newcommand{\g}{\mathbf{g}}

\newcommand{\R}{\mathbb{R}}

\newcommand{\N}{\mathbb{N}}

\newcommand{\Th}{\mathcal{T}_h}

\renewcommand{\div}{\text{div }}

\newcommand{\Om}{\Omega}

\newcommand{\IOprod}[1]{\left( #1 \right)_{I\times \Omega}}
\newcommand{\ImOprod}[1]{\left( #1 \right)_{I_m\times \Omega}}
\newcommand{\IOpair}[1]{\left\langle #1 \right\rangle_{I\times \Omega}}
\newcommand{\psikh}{\psi_{kh}}
\renewcommand{\textforall}{\text{for all }}

\newcommand{\revB}[1]{\textcolor{black}{#1}}

\definecolor{darkred}{rgb}{.7,0,0}

\definecolor{green}{rgb}{0,0.7,0}

\DeclarePairedDelimiter{\norm}{\lVert}{\rVert}

\newcommand{\na}{\nabla}
\newcommand{\pa}{\partial}

\newcommand{\IOm}{I\times \Om}
\newcommand{\ImOm}{I_m\times \Om}

\newcommand{\curl}{\operatorname{curl}}
\newcommand{\Curl}{\operatorname{\mathbf{curl}}}
\newcommand{\vertiii}[1]{{\left\vert\kern-0.25ex\left\vert\kern-0.25ex\left\vert #1
    \right\vert\kern-0.25ex\right\vert\kern-0.25ex\right\vert}}

\newcommand{\half}{\frac{1}{2}}

\newcommand{\mce}{\mathcal{E}_h}

\newcommand{\mct}{\mathcal{T}_h}

\newcommand{\avg}[1]{\left\{\hspace{-0.055in}\left\{#1\right\}\hspace{-0.055in}\right\}}

\newcommand{\jump}[1]{\left[\hspace{-0.035in}\left[#1\right]\hspace{-0.035in}\right]}

\newcommand{\Oprod}[1]{\left(#1\right)_{\Omega}}
\newcommand{\Opair}[1]{\left\langle#1\right\rangle_{\Omega}}

\newcommand{\V}{\mathbf{V}}
\renewcommand{\H}{\mathbf{H}}
\newcommand{\nn}{\mathbf{n}}
\newcommand{\VV}{\mathbb{V}}
\newcommand{\WW}{\mathbb{W}}
\newcommand{\XX}{\mathbb{X}}
\newcommand{\YY}{\mathbb{Y}}



\definecolor{darkred}{rgb}{.7,0,0}
\definecolor{green}{rgb}{0,0.7,0}



\newtheorem{theorem}{Theorem}[section]
\newtheorem{lemma}[theorem]{Lemma}
\newtheorem{proposition}[theorem]{Proposition}
\newtheorem{corollary}[theorem]{Corollary}

\theoremstyle{definition}
\newtheorem{remark}[theorem]{Remark}
\newtheorem{assumption}{Assumption}

\usepackage{zref-clever}
\zcsetup{cap,countertype={subsection=subsection}}

\zcRefTypeSetup{assumption}{Name-sg = Assumption, name-sg=assumption,Name-pl=Assumptions,name-pl=assumptions}
\zcRefTypeSetup{remark}{Name-sg = Remark, name-sg=remark,Name-pl=Remarks,name-pl=remarks}
\zcRefTypeSetup{proposition}{Name-sg = Proposition,Name-pl=Propositions}
\zcRefTypeSetup{corollary}{Name-sg =Corollary,Name-pl=Corollaries}
\zcRefTypeSetup{subsection}{Name-sg=Subsection,Name-pl=Subsections}

\AddToHook{env/proposition/begin}{\zcsetup{countertype={theorem=proposition}}}
\AddToHook{env/lemma/begin}{\zcsetup{countertype={theorem=lemma}}}
\AddToHook{env/assumption/begin}{\zcsetup{countertype={theorem=assumption}}}
\AddToHook{env/remark/begin}{\zcsetup{countertype={theorem=remark}}}
\AddToHook{env/corollary/begin}{\zcsetup{countertype={theorem=corollary}}}

\newcommand{\Cref}[1]{\zcref{#1}}

\subjclass{ 76M10, 35G16, 65M15, 65M60, 65N30}

\begin{document}
\title{Fully discrete error analysis of finite element discretizations of time-dependent
        Stokes equations in a stream-function formulation}

\author{Dmitriy Leykekhman}\address{Department of Mathematics,
               University of Connecticut,
              Storrs,
            CT~06269, USA (\email{dmitriy.leykekhman@uconn.edu}). }
\author{Boris Vexler}\address{Chair of Optimal Control, Department for Mathematics, School of Computation, Information and Technology, Technical University of Munich, Boltzmannstra{\ss}e 3, 85748 Garching b. Munich, Germany
(\email{vexler@tum.de, wagnerja@cit.tum.de}). }
\author{Jakob Wagner}
\sameaddress{2}

\begin{abstract}
In this paper we establish best-approximation-type error estimates for fully discrete Galerkin solutions of 
the time-dependent Stokes problem
using the stream-function formulation. The resulting equations involve the spatial biharmonic operator and the time derivative of the Laplacian.
For the time discretization we use the discontinuous Galerkin method
of arbitrary degree,
whereas we  present the space discretization in a general framework. This makes our result applicable for a wide variety of space discretization methods, provided some Galerkin orthogonality conditions are satisfied.
As an example, conformal $C^1$ and $C^0$  interior penalty methods are covered by our analysis. The results do not require any additional regularity assumptions beyond the natural regularity given by the domain and data and can be used for optimal control problems. A numerical example illustrates the theoretical findings. 
\end{abstract}

\keywords{
Stokes problem, time-dependent Stokes, parabolic problems, stream-function formulation, finite elements, discontinuous Galerkin, error estimates, best approximation, fully discrete
}

\maketitle

%
%
%
%
%

\section{Introduction}

In this paper we consider the following time-dependent Stokes problem with no-slip boundary conditions, 
\begin{equation}\label{eq:transient:Stokes}
    \begin{aligned}
	\partial_t\u-\Delta \u+ \nabla p &= \g \quad &\text{in }I\times\Omega, \\
	\nabla \cdot \u &= 0 \quad &\text{in } I\times\Omega, \\
	\u &= \oo \quad  &\text{on } I\times\partial \Omega, \\
	\u(0) &= \u_0 &\text{in } \Omega.
    \end{aligned}
\end{equation}
We assume that $\Omega\subset \mathbb{R}^2$  is a bounded, polygonal and simply connected 
domain, $T>0$ and $I=(0,T]$.
The finite element approximation of Stokes and Navier-Stokes problems is a mature and well-researched topic due to its importance for understanding and computing fluid problems. The literature is immense, we would only like to mention the monographs \cite{Glowinski_2003, 1986Girault, temam_navier_stokes_2001}.  

The numerical, and in particular finite element approximation of the problem, has a long history and the research is still very active. There are many challenges one faces in order to construct  proper finite element spaces. It is well-known that the spaces for the discrete velocity $\u_h$ and the discrete pressure $p_h$ cannot be chosen arbitrarily and the corresponding finite element spaces must satisfy the {\it inf-sup} condition. In addition, the continuity equation, i.e.,  $\nabla \cdot \u = 0$, is not trivial to satisfy on the discrete level pointwise, and usually requires either high order methods \cite{AinsworthM_ParkerC_2021, FalkR_NeilanM_2013, ScottR_VogeliusM_1985} or very special constructions  \cite{ GuzmanJ_NeilanM_2014b, GuzmanJ_NeilanM_2014}. The time discretization poses new challenges. For example, on non-matching meshes, a discretely divergence free function on 
  the previous mesh is not discretely divergence free on the next mesh and needs to be projected, which
leads to oscillations in the pressure.
All these challenges are well-known and are addressed in more or less satisfactory fashion and constitute the standard part of the finite element theory. 
Recently, the  finite element theory and practices have been viewed in a new light and many standard finite element approximation schemes faced some criticism. The main objection of  many standard finite element methods, like Taylor-Hood or MINI elements, are the facts that the discrete pressure and velocity are coupled and the error estimates for the velocity contain a pressure term. As a result, in some situations, the perturbations of the source term by a gradient field is not completely absorbed by the discrete pressure alone and may lead to a large error for the computed velocity \cite{JohnV_LinkeA_MerdonC_NeilanM_RebholzL_2017}. This observation has recently led to the development of various  {\it pressure-robust} methods \cite{AhmedN_BarrenecheaG_BurmanE_GuzmanJ_LinkeA_MerdonC_2021, AhmedN_LinkeA_MerdonC_2018, LinkeA_MerdonC_NeilanM_2020}. 

An old two-dimensional approach that addresses all the above-mentioned points  is the stream-function method where the velocity $\u$ is taken as vectorial curl of a scalar function $\psi$
\begin{equation}\label{eq:u_psi_relationship}
\u = \Curl(\psi):=(\partial_2 \psi, -\partial_1 \psi)^T.
\end{equation}
Its dual operator is given by the scalar curl of a vector field $\vv$
$$
  \curl(\vv) = \partial_2 \vv_1 - \partial_1 \vv_2.
$$
For $\vv \in H^1_0(\Om)^2, \psi \in H^1_0(\Om)$, there hold the following identities, to be understood in 
$H^{-1}(\Om)^2$ and $H^{-1}(\Om)$, respectively:
\begin{align}
  \Curl(\curl(\vv)) &= (\partial_2^2 \vv_1 - \partial_2 \partial_1 \vv_2,-\partial_1 \partial_2 \vv_1+\partial_1^2 \vv_2)^T = \Delta \vv - \nabla (\div \vv)
  \label{eq:Curlcurl}\\
  \curl(\Curl(\psi)) &= \Delta \psi. \label{eq:curlCurl}
\end{align}
For velocities $\u$ satisfying the form \eqref{eq:u_psi_relationship}, it clearly holds $\div\u=0$,
and taking formally the negative scalar curl of the equation
\eqref{eq:transient:Stokes}, we find that the scalar function $\psi$ satisfies the following fourth order  equation with $f = -\curl(\g)$
\begin{equation}\label{eq:transient:Fourth}
    \begin{aligned}
	-\partial_t\Delta\psi+\Delta^2 \psi &= f \quad &\text{in }I\times\Omega, \\
	\psi=\partial_n\psi &=  0 \quad  &\text{on } I\times\partial \Omega, \\
	\psi(0) &= \psi_0 &\text{in } \Omega,
    \end{aligned}
\end{equation}
where $\psi_0$ is the solution to 
\begin{equation}\label{eq:initial_psi0}
    \begin{aligned}
	-\Delta \psi_0 &= -\curl(\u_0) \quad &\text{in }\Omega, \\
	\psi_0 &=  0 \quad  &\text{on } \partial \Omega.
    \end{aligned}
\end{equation}
For simply connected $\Omega$, the equations  \eqref{eq:transient:Stokes} and \eqref{eq:transient:Fourth} 
are equivalent, for a proof, we refer to \cite{GuermondJL_QuartapelleL_1994}.
 Although the stationary problem is well investigated, (cf. Parts 4 and 5 in \cite{GunzburgerM_2012}),
it seems there is no clean theory of fully discrete finite element error estimates for Galerkin methods for the time-dependent problem.
For example, a relatively recent paper \cite{AdakD_MoraD_NatarajanS_SilgadoA_2021} treated backward Euler for virtual elements and derived some fully discrete error estimates in the $L^\infty(H^1)$-norm. 
However, in order to achieve these estimates, the authors rely on regularity in time for spatial error estimates
and had to impose overly strict assumptions on 
$\partial_{tt} \psi$ and $\g \in L^\infty(I;H^{r-1}(\Om)^2)$ for some $r>\frac12$.
Another recent paper \cite{KimQuasiGeostrophic2025} discusses discretization of a similar equation with $C^1$ conforming elements in space and backward Euler in time. 

In this paper, we take a first step in the direction of developing an elegant theory for the fully discrete error estimates. 
\revB{The numerical analysis for discontinuous Galerkin discretizations in time for parabolic problems goes back to \cite{jamet_galerkin_type_1978,ErikssonJohnsonThomee:1985}, see also \cite{LarssoThomeeWahlbin:1998}.
Based on the ideas of (fully) discrete maximal parabolic regularity for this class of methods, see, \cite{LeykekhmanD_VexlerB_2017a,LeykekhmanVexler:2018SINUM}, best-approximation-type error estimates in various norms were developed for linear parabolic \cite{LeykekhmanD_VexlerB_2016a,LeykekhmanVexler:2017SINUM_Gradient}, transient Stokes \cite{Behringer_Leykekhman_Vexler_2022,LeykekhmanVexler:2024Calcolo}, and Navier-Stokes equations \cite{VexlerWagner:2024M2AN}.}
We establish a best-approximation-type error estimate  for discontinuous Galerkin discretizations in time of arbitrary degree and finite element discretization in space  (that include conformal $C^1$ elements as well as $C^0$ interior penalty methods) of \eqref{eq:transient:Fourth}, namely
\begin{equation}\label{eq:psi:best_approx_intro}
      \norm{\na(\psi-\psi_{{k}h})}_{L^2(I\times\Om)} 
      \leq C  \Big(\norm{\na(\psi-{\chi})}_{L^2(I\times\Om)} 
          +  \norm{\na(\psi - R_h\psi)}_{L^2(I\times\Om)}
      +   \norm{\na(\psi - \pi_k\psi)}_{L^2(I\times\Om)} \Big), %
\end{equation}
where $\psi_{kh}$ is the fully  discrete Galerkin solution for \eqref{eq:transient:Fourth}, $R_h$ is the Galerkin projection of the stationary biharmonic problem, $\pi_k$ is a certain time projection, and  $\chi$ is an arbitrary element from the finite dimensional space. 
Under low regularity assumptions, i.e., $f \in L^2(I;H^{-1}(\Om))$ and $\psi_0 \in H^2_0(\Om)$, 
using the approximation properties of the above-mentioned projections, we can establish the following  optimal convergence result 
\begin{equation}\label{eq:psi:rates}
    \norm{\na(\psi-\psi_{kh})}_{L^2(I\times\Om)} 
    \leq C  (k+h^2 )\Big(
     \|f\|_{L^{2}(I;H^{-1}(\Omega))}+\|\psi_0\|_{H^2_0(\Om)}
  \Big) 
\end{equation}
on convex polygonal domains.
The minimal assumptions we put on the data are satisfied by the usual assumptions on data for the time-dependent Stokes problem, namely $\g \in L^2(I;\H)$ and $\uu_0 \in H^1_0(\Om)^2$ with $\nabla \cdot \uu_0 = 0$.
For a precise introduction of the space $\H$, see \Cref{subsec:stokes_theory}.
Indeed we will argue later, see \eqref{eq:curl_isom} and \Cref{lemm:initial_data_stability},
that under those assumptions it holds $f \in L^2(I;H^{-1}(\Om))$ and $\psi_0 \in H^2_0(\Om)$.
As a result, taking $\u_{kh}=\Curl(\psi_{kh})$ we obtain for the velocity solution 
of \eqref{eq:transient:Stokes} 
\begin{equation}\label{eq:psi:rates velocity}
    \begin{aligned}
        \norm{\u-\u_{kh}}_{L^2(I\times \Om)}
    &=\norm{\Curl(\psi-\psi_{kh})}_{L^2(I\times \Om)}
    = \norm{\na(\psi-\psi_{kh})}_{L^2(I\times \Om)}\\
    & \leq C  (k+h^2)\left(\norm{\g}_{L^2(I\times \Om)} + \norm{\uu_0}_{H^1_0(\Om)}\right).
    \end{aligned}
\end{equation}
    As we have formulated our main result \eqref{eq:psi:best_approx_intro} in a best-approximation-type form, higher order estimates
    are also possible, assuming higher regularity of $\psi$.
    
    This paper presents our initial step in the numerical analysis of stream-function formulations.
    We plan to extend our results to surface flows, which are inherently two-dimensional (see \cite{MR4160329,NeilanWan:2025} for the stationary case), to multiply connected domains, see \cite{GuermondJL_QuartapelleL_1997}, as well as to the Navier–Stokes equations. Moreover, the only natural (low) regularity assumptions we make on the data allow application to optimal control problems, cf. \cite{VexlerMeidner2025}.
    
    The remainder of the paper is structured as follows:
    in \Cref{sec:continuous}, we summarize some results of the continuous Stokes problem,
    and establish some results regarding the associated stationary and time-dependent biharmonic equations.
    In \Cref{sec: discretization} we introduce the space and time discretizations, keeping the 
    discussion of the space discretization very short and general, and establish the first stability result
    for the fully discrete problem.
    We proceed to prove the best-approximation-type error estimate in \Cref{sec:error_analysis}, with 
    our main result being \Cref{thm:error fully discrete}. After this, we discuss the specific orders of 
    convergence for special choices of the space discretization.
    In \Cref{sec:numerical_experiments} we present some numerical experiments illustrating our theoretical findings. In \Cref{sec:appendix}, we discuss the sharpness of our regularity/stability results from \Cref{thm:existence_uniqueness_standard_regularity} by  providing a counterexample, which shows that higher regularity of the time-derivative of the solution cannot be expected.


\section{Continuous problem}\label{sec:continuous}

\subsection{Notation}
We will use the standard notation for Lebesgue and Hilbert spaces. Most of our analysis will be $L^2$ based and we will denote the space and space-time inner-products by 
$$
\Oprod{f,g} =\int_\Omega fg\, dx\quad \text{and}\quad (f,g)_{I\times\Omega}=\int_I(f,g)_\Omega dt,
$$
respectively, with the corresponding norms
$$
\|f\|^2_\Omega =\int_\Omega |f|^2 \, dx\quad \text{and}\quad \|f\|^2_{I\times\Omega}=\int_I\|f\|^2_\Omega dt.
$$
When the situation is less clear we will use the standard notation for the norms, $L^2(\Om)$, $H^2(\Om)$ etc. 
We will denote by $\langle\cdot,\cdot\rangle$ a dual pairing between a function space and its dual.

\subsection{Function spaces}

We define the following functional spaces.
$$
\begin{aligned}
H^1_0(\Om) &=\{v\in H^1(\Om)\ |\ v=0\quad \text{on $\partial \Om$}\},\\
H^2_0(\Om) &=\{v\in H^2(\Om)\ |\ v=\partial_nv=0\quad \text{on $\partial \Om$}\},\\
H^{-2}(\Om)&=\left(H^2_0(\Om)\right)^*,\quad H^{-1}(\Om)=\left(H^1_0(\Om)\right)^*.\\
\end{aligned}
$$
\begin{lemma}\label{lemm:h2_norm_equivalence}
  On the function space $H^2_0(\Om)$, 
  the norm $\norm{v}_{H^2_0(\Om)} := \norm{\Delta v}_{L^2(\Om)}$ is equivalent to $\norm{v}_{H^2(\Om)}$.
\end{lemma}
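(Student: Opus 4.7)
The plan is to prove the two inequalities $\|\Delta v\|_{L^2(\Omega)} \le C_1 \|v\|_{H^2(\Omega)}$ and $\|v\|_{H^2(\Omega)} \le C_2 \|\Delta v\|_{L^2(\Omega)}$ separately. The first direction is immediate, since $\Delta v = \partial_1^2 v + \partial_2^2 v$ gives $\|\Delta v\|_{L^2(\Omega)} \le \sqrt{2}\, \|D^2 v\|_{L^2(\Omega)} \le \sqrt{2}\, \|v\|_{H^2(\Omega)}$.

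For the nontrivial direction I would first establish the integration-by-parts identity
\begin{equation*}
    \|\Delta v\|_{L^2(\Omega)}^2 \;=\; \sum_{i,j=1}^{2}\|\partial_i \partial_j v\|_{L^2(\Omega)}^2 \;=\; \|D^2 v\|_{L^2(\Omega)}^2
    \qquad \text{for all } v \in H^2_0(\Omega).
\end{equation*}
This is proved first for $v \in C^\infty_c(\Omega)$ by writing $\int_\Omega \partial_i^2 v\, \partial_j^2 v\, dx$ and integrating by parts twice (once in $x_i$, once in $x_j$), so that all boundary terms vanish and one obtains $\int_\Omega (\partial_i\partial_j v)^2\, dx$. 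The identity then extends to $H^2_0(\Omega) = \overline{C^\infty_c(\Omega)}^{H^2}$ by density, using that both sides are continuous with respect to the $H^2(\Omega)$ norm.

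Next, I would bound the lower order terms by Poincaré's inequality. Since $v \in H^2_0(\Omega)$, in particular $v \in H^1_0(\Omega)$, so Poincaré gives $\|v\|_{L^2(\Omega)} \le C_P \|\nabla v\|_{L^2(\Omega)}$. Moreover, for $v \in C^\infty_c(\Omega)$ one has $\partial_i v \in C^\infty_c(\Omega)$, and by density this extends to the statement that $\partial_i v \in H^1_0(\Omega)$ for every $v \in H^2_0(\Omega)$. Applying Poincaré componentwise yields $\|\partial_i v\|_{L^2(\Omega)} \le C_P \|\nabla \partial_i v\|_{L^2(\Omega)} \le C_P \|D^2 v\|_{L^2(\Omega)}$, and consequently
\begin{equation*}
    \|v\|_{H^2(\Omega)}^2 = \|v\|_{L^2(\Omega)}^2 + \|\nabla v\|_{L^2(\Omega)}^2 + \|D^2 v\|_{L^2(\Omega)}^2 \le C\, \|D^2 v\|_{L^2(\Omega)}^2 = C\, \|\Delta v\|_{L^2(\Omega)}^2,
\end{equation*}
where in the last step I invoke the identity established above.

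The only mildly delicate point is the extension of the integration-by-parts identity to $H^2_0(\Omega)$ and the fact that $\partial_i v \in H^1_0(\Omega)$ for $v \in H^2_0(\Omega)$; both follow cleanly from the density of $C^\infty_c(\Omega)$ in $H^2_0(\Omega)$ (which is how the space is defined here, modulo the equivalent trace characterization). No regularity of $\partial \Omega$ beyond what is assumed (polygonal, hence Lipschitz) is required, since the argument only uses integration by parts for compactly supported test functions and standard Poincaré inequalities.
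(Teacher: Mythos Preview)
Your proof is correct and follows exactly the approach the paper indicates: the paper's proof is a one-line sketch stating that the result ``follows by integration by parts and Poincar\'e inequality for the lower order terms,'' and you have spelled out precisely those two ingredients in detail.
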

\begin{proof}
The proof follows by integration by parts and Poincaré inequality for the lower order terms.
\end{proof}

    \subsection{Results on the time-dependent Stokes problem}\label{subsec:stokes_theory}

    For a better comparison of the stream-function formulation to the classic formulation of the Stokes problem,
    let us recall some well known results.
    Most commonly, the Stokes equations on the continuous level are discussed in divergence-free spaces,
    to which end we introduce the notations
    \begin{align*}
        \V := \set{\vv \in H^1_0(\Om)^2 | \nabla \cdot \vv = 0}
        \quad \text{ and } \quad 
        \H := \set{\vv \in L^2(\Om)^2| \nabla \cdot \vv = 0 \land \nn\cdot \vv = 0},
    \end{align*}
    where by $\nn \cdot \vv$ we denote the normal trace of $\vv$. It is straightforward to verify the 
    following result
    \begin{lemma}\label{lemm:V_norm_equivalence}
        On the space $H^1_0(\Om)^2$, the following norms are equivalent:
        $$\norm{\vv}_{H^1_0(\Om)} \cong \norm{\na \cdot \vv}_{L^2(\Om)} + \norm{\curl(\vv)}_{L^2(\Om)}.$$
        Especially, $\norm{\curl(\vv)}_{L^2(\Om)}$ induces a norm on $\V$ equivalent to the $H^1_0(\Om)$ norm.
    \end{lemma}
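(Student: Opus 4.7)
The plan is to establish the much sharper pointwise-integrated identity
\begin{equation*}
  \norm{\na \vv}_{L^2(\Om)}^2 = \norm{\na \cdot \vv}_{L^2(\Om)}^2 + \norm{\curl(\vv)}_{L^2(\Om)}^2
  \quad \text{for all } \vv \in H^1_0(\Om)^2,
\end{equation*}
from which both claims of the lemma follow at once: the first since $\sqrt{a^2+b^2}$ and $a+b$ are equivalent on $\mathbb{R}_{\geq 0}^2$ and by the Poincar\'e inequality $\norm{\vv}_{H^1_0(\Om)}$ may be taken as $\norm{\na \vv}_{L^2(\Om)}$; the second since $\vv\in\V$ forces $\na\cdot\vv=0$, leaving only the curl term.

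First I would verify the identity for test functions $\vv=(v_1,v_2)\in C_c^\infty(\Om)^2$ by a direct pointwise expansion. Using $\curl(\vv)=\partial_2 v_1 - \partial_1 v_2$ and $\na \cdot \vv = \partial_1 v_1 + \partial_2 v_2$, one finds pointwise
\begin{equation*}
  (\na\cdot\vv)^2 + (\curl\vv)^2 - |\na\vv|^2
  = 2\bigl(\partial_1 v_1\,\partial_2 v_2 - \partial_2 v_1\,\partial_1 v_2\bigr).
\end{equation*}
The integrals of the two cross terms coincide: integrating $\partial_1 v_1\,\partial_2 v_2$ by parts in $x_1$ (boundary term vanishes by compact support) gives $-\int_\Om v_1\,\partial_1\partial_2 v_2\,dx$, and integrating $\partial_2 v_1\,\partial_1 v_2$ by parts in $x_2$ gives the same expression. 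Hence the right-hand side integrates to zero and the identity holds on $C_c^\infty(\Om)^2$.

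Second, I would extend to all of $H^1_0(\Om)^2$ by density. Since $C_c^\infty(\Om)^2$ is dense in $H^1_0(\Om)^2$ and the three quantities $\norm{\na\vv}_{L^2(\Om)}$, $\norm{\na\cdot\vv}_{L^2(\Om)}$, $\norm{\curl(\vv)}_{L^2(\Om)}$ are continuous with respect to the $H^1_0(\Om)$-norm, the identity passes to the limit.

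No step here is a genuine obstacle; the only subtle point is that the cancellation of the cross terms relies crucially on the vanishing boundary trace, so a density argument from $C_c^\infty(\Om)^2$ (rather than from $C^\infty(\overline{\Om})^2$) is essential. On a general simply connected polygonal domain, the identity itself would fail without the $H^1_0$-condition, and this is reflected in the fact that the equivalence in the lemma is stated only on $H^1_0(\Om)^2$.
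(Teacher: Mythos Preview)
Your proof is correct and essentially identical to the paper's own argument: both establish the identity $\norm{\na \vv}_{L^2(\Om)}^2 = \norm{\na \cdot \vv}_{L^2(\Om)}^2 + \norm{\curl(\vv)}_{L^2(\Om)}^2$ for $\vv \in C_c^\infty(\Om)^2$ by expanding the squares and cancelling the cross terms $2(\partial_1 v_1\,\partial_2 v_2 - \partial_2 v_1\,\partial_1 v_2)$ via integration by parts, and then pass to $H^1_0(\Om)^2$ by density. Your remark on the necessity of the zero boundary trace is apt and not made explicit in the paper.
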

    \begin{proof}
      For $\vv \in C^\infty_0(\Om)^2$ we obtain using the definition of $\nabla \cdot$ and $\curl$, as well as 
      integration by parts
      \begin{align*}
        \|\nabla \cdot \vv\|_{L^2(\Om)}^2 + \|\curl(\vv)\|_{L^2(\Om)}^2
        &= \int_{\Om} (\partial_1 \vv_1 + \partial_2 \vv_2)^2 + (\partial_1 \vv_2 - \partial_2 \vv_1)^2 \, dx\\
        &= \int_\Om \sum_{i,j=1}^2 (\partial_i \vv_j)^2 + 2 \partial_1 \vv_1 \partial_2 \vv_2 - 2 \partial_1 \vv_2 \partial_2 \vv_1 \, dx\\
        &= \int_\Om \nabla \vv : \nabla \vv - 2 \vv_1 \partial_1 \partial_2 \vv_2 + 2 (\partial_2 \partial_1 \vv_2) \vv_1 \, dx\\
        &= \int_\Om \nabla \vv : \nabla \vv \, dx = \|\vv\|_{H^1_0(\Om)}^2.
      \end{align*}
      The statements of the lemma then follow from density of $C^\infty_0(\Om)$ in $H^1_0(\Om)$, and the
      definition of $\V$.
    \end{proof}
    
    Using these spaces, the stationary Stokes operator $A\colon D(A) \to \H$ is given by 
    \begin{equation}\label{eq:def_stokes_operator}
        \Oprod{A \uu,\vv} = \Oprod{\nabla \uu,\nabla \vv} \quad \text{ for all } \vv \in \V,
    \end{equation}
    where $D(A) := \{\vv \in \V: \Delta \vv \in L^2(\Om)^2\}$.
    The operators $\curl, \Curl$ lead to the following isomorphisms between scalar valued Sobolev spaces and 
    divergence-free vector spaces, see \cite[Lemma 2]{GuermondJL_QuartapelleL_1994}:
    \begin{align}
        \Curl\colon& H^1_0(\Om) \to \H, && \text{and}& \Curl\colon& H^2_0(\Om) \to \V,\label{eq:Curl_isom}\\ 
        \curl\colon& \H \to H^{-1}(\Om), && \text{and} & \curl\colon& \V^* \to H^{-2}(\Om).\label{eq:curl_isom}
    \end{align}
    \revB{The isomorphism property $\curl\colon \V^* \to H^{-2}(\Om)$ follows directly from the isomorphism $\Curl\colon H^2_0(\Om) \to \V$ by the fact that $\curl$ is the adjoint operator for $\Curl$.}
    
    The time-dependent Stokes equations are described by the spaces
    \begin{align*}
        \VV & := L^2(I;\V) \cap H^1(I;\V^*) \hookrightarrow C(\bar I;\H),\\
        \WW & := L^2(I;\V \cap H^2(\Om)^2) \cap H^1(I;\H) \hookrightarrow C(\bar I;\V).
    \end{align*}
    The weak formulation of the time-dependent Stokes equations reads:
    Find $\uu \in \VV$ such that $\uu(0) = \uu_0$ and
    \begin{equation}\label{eq:stokes_weak_formulation}
        \IOpair{\partial_t \uu,\vv} + \IOprod{\nabla \uu,\nabla \vv} 
        = \IOpair{\g,\vv} \quad \textforall \vv \in L^2(I;\V),
    \end{equation}
    where $\IOpair{\cdot,\cdot}$ denotes the duality pairing between $L^2(I;\V)$ and $L^2(I;\V^*)$.
    There holds the following result, see 
    \cite[Chapter III, Theorem 1.1 Proposition 1.2]{temam_navier_stokes_2001}:
    \begin{proposition}
        Let $\g \in L^2(I;\V^*)$ and $\uu_0 \in \H$, then \eqref{eq:stokes_weak_formulation}
        has a unique weak solution $\uu \in \VV$ with
        \begin{equation*}
            \norm{\uu}_{\VV} \le C(\norm{\g}_{L^2(I;\V^*)} + \norm{\uu_0}_{\H}).
        \end{equation*}
        If $\g \in L^2(I;L^2(\Om)^2)$ and $\uu_0 \in \V$, then $\uu$ additionally satisfies
        \begin{equation*}
            \norm{\partial_t \uu}_{L^2(I;\H)} + \norm{A \uu}_{L^2(I;\H)} \le
            C(\norm{\g}_{L^2(I;L^2(\Om))} + \norm{\uu_0}_{\V}),
        \end{equation*}
        where $A$ is the Stokes operator defined in \eqref{eq:def_stokes_operator}.
        If moreover $\Omega$ is convex, then $\uu \in \WW$ with a bound
        \begin{equation*}
            \norm{\uu}_{\WW} \le C (\norm{\g}_{L^2(I;L^2(\Om))} + \norm{\uu_0}_{\V}).
        \end{equation*}
    \end{proposition}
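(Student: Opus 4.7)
The plan is to proceed with the classical Faedo--Galerkin method using a spectral basis of $\V$. Since the Stokes operator $A$ defined in \eqref{eq:def_stokes_operator} is a self-adjoint positive operator on $\H$ with compact inverse (by Rellich's theorem applied to $A^{-1}\colon \H \to \V \hookrightarrow \H$), there exists an orthonormal basis $\{w_j\}_{j\ge 1} \subset D(A)$ of $\H$ consisting of eigenfunctions $A w_j = \lambda_j w_j$, and this basis is also orthogonal in $\V$ with respect to the inner product $\Oprod{\nabla\cdot,\nabla\cdot}$. I would seek approximations $\uu_m(t) = \sum_{j=1}^m g_{j,m}(t)\, w_j$ solving the finite-dimensional ODE system obtained by testing \eqref{eq:stokes_weak_formulation} against $w_j$ for $j=1,\ldots,m$, with initial data given by the $\H$-projection of $\uu_0$ onto $\mathrm{span}\{w_1,\ldots,w_m\}$. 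Standard ODE theory yields a unique global solution, and the whole task reduces to deriving a priori estimates uniform in $m$ and passing to the limit.

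For the first estimate, I would test the Galerkin equation with $\uu_m$ itself. Using $\Oprod{\partial_t \uu_m,\uu_m} = \tfrac{1}{2}\tfrac{d}{dt}\|\uu_m\|_{\H}^2$ and the duality bound $\IOpair{\g,\uu_m} \le \|\g\|_{L^2(I;\V^*)} \|\uu_m\|_{L^2(I;\V)}$ combined with Young's inequality, Gr\"onwall gives a bound on $\uu_m$ in $L^\infty(I;\H) \cap L^2(I;\V)$ in terms of $\|\g\|_{L^2(I;\V^*)} + \|\uu_0\|_{\H}$. The bound on $\partial_t \uu_m$ in $L^2(I;\V^*)$ then follows directly from the equation, because the projection onto $\mathrm{span}\{w_1,\ldots,w_m\}$ is a bounded operator on $\V^*$ (by self-adjointness of the spectral basis). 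Weak-$*$ and weak compactness extract a subsequence converging to some $\uu \in \VV$, and linearity of the bilinear form allows passage to the limit. Uniqueness and the initial condition follow from the usual energy identity: if two solutions exist, their difference satisfies a homogeneous equation, and testing with the difference yields $\|\uu-\tilde\uu\|_{\H}^2 \equiv 0$.

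For the second, stronger estimate under $\g \in L^2(I;L^2(\Om)^2)$ and $\uu_0 \in \V$, I would test with $A\uu_m$ (which lies in the Galerkin subspace since the $w_j$ are eigenfunctions). The key identity $\Oprod{\partial_t \uu_m, A\uu_m} = \tfrac{1}{2}\tfrac{d}{dt}\|\nabla \uu_m\|_{L^2(\Om)}^2$ together with $\Oprod{A\uu_m,A\uu_m} = \|A\uu_m\|_{\H}^2$ and Cauchy--Schwarz on the right-hand side yields, after integration in time, a uniform bound on $\|A\uu_m\|_{L^2(I;\H)}$ and hence on $\|\partial_t\uu_m\|_{L^2(I;\H)}$ via the equation. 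The initial norm $\|\nabla\uu_m(0)\|$ is controlled by $\|\uu_0\|_{\V}$ since the $w_j$ are $\V$-orthogonal. Weak compactness and lower semicontinuity transfer both estimates to the limit $\uu$. Finally, to upgrade to $\uu \in \WW$ on a convex polygonal domain, I would invoke the stationary regularity result that $A\uu(t) \in \H$ implies $\uu(t) \in H^2(\Om)^2$ with $\|\uu(t)\|_{H^2(\Om)} \le C\|A\uu(t)\|_{\H}$; this is the Kellogg--Osborn $H^2$-regularity theorem for the stationary Stokes system on a convex polygonal domain, and integrating in $t$ yields the desired $\WW$-bound.

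The main obstacle is really only the last step: the $H^2$-regularity on convex polygons is not immediate from classical elliptic theory because the domain has corners, and the usual pressure-velocity coupling must be handled with care. For the first two bounds, the machinery is routine energy estimates. The limit passage is standard provided one is careful about the time derivative: one needs an Aubin--Lions compactness argument to identify $\partial_t \uu$ and to justify that $\uu(0) = \uu_0$ in the correct trace sense given by the embeddings $\VV \hookrightarrow C(\bar I;\H)$ and $\WW \hookrightarrow C(\bar I;\V)$ already noted in the paper.
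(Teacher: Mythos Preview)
The paper does not prove this proposition at all: it is stated with a reference to \cite[Chapter III, Theorem 1.1, Proposition 1.2]{temam_navier_stokes_2001} and no argument is given. Your Faedo--Galerkin sketch using the spectral basis of the Stokes operator is precisely the classical proof found in that reference, and the steps you outline (energy estimate by testing with $\uu_m$, higher regularity by testing with $A\uu_m$, passage to the limit, Kellogg--Osborn for $H^2$ regularity on convex polygons) are correct. So your proposal is sound and aligned with the standard literature the paper defers to.
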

    We have already stated, that the initial data $\psi_0$ for the stream-function formulation and 
    the data $\uu_0$ for the standard formulation have to satisfy the relation \eqref{eq:initial_psi0}.
    In the following lemma, we establish that the regularity assumptions posed on $\psi_0$ are actually natural
    and hold true, if the corresponding standard assumptions are posed on $\uu_0$.
    \begin{lemma}\label{lemm:initial_data_stability}
        Let $\uu_0 \in \H$ and $\psi_0$ solve \eqref{eq:initial_psi0}. Then $\psi_0 \in H^1_0(\Om)$, 
        and there holds a bound 
        \begin{equation*}
            \norm{\psi_0}_{H^1_0(\Om)} \le C \norm{\uu_0}_{\H}.
        \end{equation*}
        If furthermore, $\uu_0 \in \V$, then $\psi_0 \in H^2_0(\Om)$ and there holds the bound
        \begin{equation*}
            \norm{\psi_0}_{H^2_0(\Om)} \le C \norm{\uu_0}_{\V}.
        \end{equation*}
    \end{lemma}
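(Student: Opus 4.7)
The plan is to bypass solving the Poisson problem \eqref{eq:initial_psi0} directly and instead exploit the $\Curl$ isomorphisms \eqref{eq:Curl_isom} together with the identity $\curl(\Curl(\psi)) = \Delta \psi$ from \eqref{eq:curlCurl}. The idea in both cases is the same: construct a candidate stream function $\tilde\psi$ from $\uu_0$ at the appropriate regularity level, verify that it satisfies \eqref{eq:initial_psi0} in the weak sense, and then identify $\tilde\psi$ with $\psi_0$ by uniqueness so that the stability bound from the isomorphism is inherited for free.

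For the first statement, since $\Curl\colon H^1_0(\Om) \to \H$ is an isomorphism, there is a unique $\tilde\psi \in H^1_0(\Om)$ with $\Curl(\tilde\psi) = \uu_0$ and $\norm{\tilde\psi}_{H^1_0(\Om)} \le C \norm{\uu_0}_{\H}$. Applying the scalar curl to this identity and invoking \eqref{eq:curlCurl} yields $\Delta \tilde\psi = \curl(\uu_0)$ in $H^{-1}(\Om)$, i.e.\ $\tilde\psi$ solves \eqref{eq:initial_psi0} weakly. Uniqueness of the Dirichlet problem for $-\Delta$ with right-hand side in $H^{-1}(\Om)$ forces $\psi_0 = \tilde\psi$, and the first bound follows.

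For the second statement, I would proceed analogously with the second isomorphism $\Curl\colon H^2_0(\Om) \to \V$. For $\uu_0 \in \V$ this produces a unique $\tilde\psi \in H^2_0(\Om)$ with $\norm{\tilde\psi}_{H^2_0(\Om)} \le C \norm{\uu_0}_{\V}$, where the target norm may be taken as $\norm{\Delta \cdot}_{L^2(\Om)}$ in view of \Cref{lemm:h2_norm_equivalence}. Since $H^2_0(\Om) \hookrightarrow H^1_0(\Om)$, the weak identification with $\psi_0$ established in the first step still applies, thereby upgrading the regularity of $\psi_0$ to $H^2_0(\Om)$ and yielding the stronger estimate.

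The only point requiring any care, which I do not foresee as a serious obstacle, is making sure that taking the scalar curl of the weak identity $\Curl(\tilde\psi) = \uu_0$ is legitimate at each regularity level and that \eqref{eq:curlCurl} applies; this is precisely the content of \eqref{eq:curl_isom}, which maps $\H$ into $H^{-1}(\Om)$ and is automatic once $\uu_0 \in \V$ has an $L^2$ curl. The whole proof thus reduces to bookkeeping with the $\curl$–$\Curl$ duality and invoking uniqueness for the Laplace Dirichlet problem in the appropriate dual pair.
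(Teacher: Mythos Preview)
Your proposal is correct and arrives at the same conclusion, but via a genuinely different route than the paper. You treat the $\Curl$ isomorphisms \eqref{eq:Curl_isom} as black boxes: take the preimage $\tilde\psi$ of $\uu_0$, apply $\curl$ and invoke \eqref{eq:curlCurl} to see that $\tilde\psi$ solves the Poisson problem \eqref{eq:initial_psi0}, and then identify $\tilde\psi$ with $\psi_0$ by uniqueness of the Dirichlet Laplacian. The paper instead works in the opposite direction for the $\V$ case: it starts from the Poisson solution $\psi_0$, shows that $\Curl\psi_0-\uu_0$ is a gradient $\nabla p$, proves $p$ is harmonic with vanishing Neumann data and hence zero, and concludes $\Curl\psi_0=\uu_0$, from which $\partial_1\psi_0,\partial_2\psi_0\in H^1_0(\Om)$ and thus $\psi_0\in H^2_0(\Om)$. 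In effect the paper reproves inline the surjectivity of $\Curl\colon H^2_0(\Om)\to\V$, whereas you simply quote it. Your argument is shorter and perfectly legitimate since \eqref{eq:Curl_isom} is already stated as a cited fact; the paper's version is more self-contained and makes transparent why no convexity assumption on $\Omega$ is needed for the $H^2_0$ regularity.
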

    Before we prove the lemma, note that the result $\psi_0 \in H^2_0(\Om)$ only poses a requirement on $\uu_0$
    and we do not require, e.g., convexity of $\Omega$, which would be otherwise a standard assumption to achieve
    $H^2$ regularity.
    \begin{proof}
        For the case $\uu_0 \in \H$ there is nothing to prove, as this is directly given by the weak
        formulation of \eqref{eq:initial_psi0}.
        Hence assume now $\uu_0 \in \V$. 
        Our goal is to show the result by extending the proof of \cite[Lemma 2]{GuermondJL_QuartapelleL_1994}.
        As a result of the identity \eqref{eq:curlCurl}, there holds 
        \begin{equation*}
            \curl(\Curl \psi_0 - \uu_0) = 0.
        \end{equation*}
          By \cite[Lemma 3]{GuermondJL_QuartapelleL_1994} there exists a unique $p \in H^1(\Om)/\R$ such 
          that $\Curl \psi_0 - \uu_0 = \nabla p$.
        Due to the first isomorphism of \eqref{eq:Curl_isom} $\Curl \psi_0 \in \H$ and by the assumption
        $\uu_0 \in \V \subset \H$, it holds $\nabla p \in \H$. The definition of $\H$ then implies 
        $\Delta p = 0$ and $\partial_\nn p = 0$. Hence $p$ is constant and due to $p \in H^1(\Om)/\R$ it 
        actually vanishes. This implies $\Curl \psi_0 = \uu_0 \in \V \subset H^1_0(\Om)^2$.
        By definition of $\Curl$, this implies 
        \begin{equation*}
            \partial_1 \psi_0, \partial_2 \psi_0 \in H^1_0(\Om).
        \end{equation*}
        Hence especially $\psi_0 \in H^2(\Om)$ and 
        $\partial_\nn \psi_0 
        =0$ on $\partial \Om$.
        This shows $\psi_0 \in H^2_0(\Om)$.
        The norm bound can then be obtained using \Cref{lemm:h2_norm_equivalence,lemm:V_norm_equivalence},
        which yields
        \begin{equation*}
            \norm{\psi_0}_{H^2_0(\Om)} \cong \norm{\Delta \psi_0}_{L^2(\Om)}
            = \norm{\curl \uu_0}_{L^2(\Om)} \cong \norm{\uu_0}_{\V}.
        \end{equation*}
    \end{proof}

\subsection{Results for stationary biharmonic equation}

Before analyzing the time-dependent problem, we recall some basic results for the biharmonic problem with Dirichlet (clamped) boundary conditions,
\begin{equation}\label{eq:Fourth}
    \begin{aligned}
	\Delta^2 w &= f \quad &\text{in }\Omega, \\
	w=\partial_n w &=  0 \quad  &\text{on } \partial \Omega.
	    \end{aligned}
\end{equation}
The weak formulation for \eqref{eq:Fourth} reads
\begin{equation}\label{eq: weak}
w\in H^2_0(\Om) \quad : \quad	\Oprod{\Delta w, \Delta \varphi} 
= \Opair{f, \varphi} \quad \textforall \varphi\in H^2_0(\Om).
\end{equation}
The following results can be found, e.g., in \cite[Sec. 5.9]{Kondratev_1967}.
\begin{proposition}\label{prop: elliptic}
Let $\Om\subset \mathbb{R}^2$ be a bounded Lipschitz domain and $f\in H^{-2}(\Om)$.
Then \eqref{eq: weak} has a unique weak solution $w\in H^2_0(\Om)$ and there exists a constant $C$ such that 
\begin{equation}\label{eq: H2 regularity}
\|w\|_{H^{2}(\Om)}\le C\|f\|_{H^{-2}(\Om)}.
\end{equation}
If in addition $\Om$ is convex and $f\in H^{-1}(\Om)$, then the weak solution $w$ of  \eqref{eq: weak} fulfills $w\in H^3(\Om)\cap H^2_0(\Om)$ and there exists a constant $C$ such that 
\begin{equation}\label{eq: H3 regularity}
\|w\|_{H^{3}(\Om)}\le C\|f\|_{H^{-1}(\Om)}.
\end{equation}
\end{proposition}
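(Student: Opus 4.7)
The proposition consists of two claims with rather different difficulty levels, which I would treat in turn. For the first claim, my plan is a direct application of the Lax--Milgram lemma to the weak formulation \eqref{eq: weak}. The bilinear form $a(w,\varphi) := (\Delta w,\Delta\varphi)_\Om$ is continuous on $H^2_0(\Om)$ by Cauchy--Schwarz, and Lemma \ref{lemm:h2_norm_equivalence} provides coercivity via $a(w,w) = \|\Delta w\|_{L^2(\Om)}^2 \ge c \|w\|_{H^2(\Om)}^2$. For $f \in H^{-2}(\Om)$ the right-hand side $\langle f,\cdot\rangle$ is bounded on $H^2_0(\Om)$ by definition, so Lax--Milgram yields a unique $w \in H^2_0(\Om)$ with $\|\Delta w\|_{L^2(\Om)} \le \|f\|_{H^{-2}(\Om)}$, and another invocation of the norm equivalence produces \eqref{eq: H2 regularity}.

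For the second claim, rather than analyzing biharmonic corner singularities directly, I would reduce the statement to $H^2$-regularity for the stationary Stokes problem via the $\Curl$--$\curl$ isomorphisms \eqref{eq:Curl_isom}--\eqref{eq:curl_isom}. Set $\uu := \Curl w$, which lies in $\V$ by \eqref{eq:Curl_isom}; since $f \in H^{-1}(\Om)$, the isomorphism \eqref{eq:curl_isom} produces a unique $\g \in \H$ with $\curl \g = -f$ and $\|\g\|_{L^2(\Om)} \le C\|f\|_{H^{-1}(\Om)}$. Applying the polarization identity underlying Lemma \ref{lemm:V_norm_equivalence} to the two divergence-free fields $\Curl w$ and $\Curl\varphi$ then gives
\begin{equation*}
  (\na \uu, \na \Curl\varphi)_\Om = (\curl\Curl w, \curl\Curl\varphi)_\Om = (\Delta w,\Delta\varphi)_\Om = \langle f,\varphi\rangle = (\g, \Curl\varphi)_\Om,
\end{equation*}
where the final equality integrates $\curl$ by parts. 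Since $\Curl$ maps $H^2_0(\Om)$ onto $\V$, this identifies $\uu$ as the weak solution of the stationary Stokes problem with $L^2$ source $\g$.

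The main obstacle at this stage is the $H^2$-regularity of stationary Stokes on a convex polygon, which is the genuinely nontrivial ingredient; I would invoke the standard corner-regularity analysis in the spirit of \cite{Kondratev_1967}, where the hypothesis that all interior angles are strictly below $\pi$ keeps the relevant operator-pencil eigenvalues in the regular half-plane and yields $\|\uu\|_{H^2(\Om)} \le C\|\g\|_{L^2(\Om)}$. Translating this back through $\uu = \Curl w$, the two first-order partial derivatives of $w$ belong to $H^2(\Om)$, hence $w \in H^3(\Om)\cap H^2_0(\Om)$ with the quantitative bound \eqref{eq: H3 regularity}.
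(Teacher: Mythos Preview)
Your argument is correct. The paper itself does not prove this proposition; it merely records that the results ``can be found, e.g., in \cite[Sec.~5.9]{Kondratev_1967}.'' Your treatment is therefore strictly more detailed than the paper's.

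For the first claim your Lax--Milgram argument is the standard one and matches what any reader would supply. For the second claim your route differs from the paper's direct citation: you transfer the question to the stationary Stokes system via the $\Curl$/$\curl$ isomorphisms and then invoke $H^2$ Stokes regularity on convex polygons. This is valid and fits the spirit of the paper nicely, since it makes explicit the equivalence between biharmonic $H^3$ regularity and Stokes $H^2$ regularity that underlies the whole stream-function approach. Two minor remarks are worth keeping in mind. First, the detour is slightly circular at the level of citations: Kondrat'ev's weighted-space analysis treats the biharmonic operator directly, so if you are going to cite \cite{Kondratev_1967} for the Stokes step you may as well cite it (or the later refinements in \cite{blum_boundary_1980}) for the biharmonic $H^3$ bound itself. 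If you want the reduction to carry genuine weight, it is cleaner to point to an independent source for Stokes regularity on convex polygons (Kellogg--Osborn or Dauge). Second, the proposition as stated allows general convex Lipschitz domains, not only polygons; your corner argument covers the polygonal case relevant to the paper, and the general convex case for Stokes is also classical, but you should say which you are claiming.
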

The above result yields, that the operator $\mathcal A\colon H^2_0(\Om) \to H^{-2}(\Om)$, defined by 
    \begin{equation}\label{eq:def_biharmonic_operator}
        \Opair{\mathcal A w,\varphi} = \Oprod{\Delta w, \Delta \varphi} \qquad \textforall \varphi \in H^2_0(\Om)
    \end{equation}
    is an isomorphism.
We immediately obtain the following corollary.
\begin{corollary}\label{cor:biharmonic}
Let $g \in H^1(\Omega)$ and $w \in H^2_0(\Omega)$ be the weak solution to the problem 
\begin{equation}\label{eq:Fourth weak}
	\Oprod{\Delta w, \Delta \varphi} = \Oprod{\na g,\na \varphi} \quad \textforall \varphi\in H^2_0(\Om).
	\end{equation}
	Then there exists a constant $C$  such that
	\[
	\|w\|_{H^2_0(\Omega)} \le C \|\na g\|_{L^2(\Om)}.
	\]
  If $\Omega$ is in addition convex, then $w \in H^3(\Omega)\cap H^2_0(\Omega)$ and
\begin{equation}\label{eq: H3 regularity 2}
\|w\|_{H^{3}(\Om)}\le C\|\na g\|_{L^2(\Om)}.
\end{equation}	
\end{corollary}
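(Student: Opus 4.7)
The plan is to recast the right-hand side of \eqref{eq:Fourth weak} as a functional on $H^2_0(\Om)$ (respectively $H^1_0(\Om)$) and then apply \Cref{prop: elliptic} directly. Note that $g$ is only assumed to lie in $H^1(\Om)$ and not in $H^1_0(\Om)$, so one cannot just identify the right-hand side with $-\Delta g$; instead, one must work with the bilinear pairing.

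First, I define the linear functional
\begin{equation*}
    F \colon H^1_0(\Om) \to \R, \quad \Opair{F,\varphi} := \Oprod{\na g, \na \varphi}.
\end{equation*}
Cauchy--Schwarz gives $\abs{\Opair{F,\varphi}} \le \norm{\na g}_{L^2(\Om)} \norm{\na \varphi}_{L^2(\Om)}$, so $F \in H^{-1}(\Om)$ with $\norm{F}_{H^{-1}(\Om)} \le \norm{\na g}_{L^2(\Om)}$. Because of the continuous embedding $H^2_0(\Om) \hookrightarrow H^1_0(\Om)$ (with the embedding constant controlled by Poincaré's inequality), the restriction of $F$ to $H^2_0(\Om)$ defines an element of $H^{-2}(\Om)$ satisfying
\begin{equation*}
    \norm{F}_{H^{-2}(\Om)} \le C \norm{F}_{H^{-1}(\Om)} \le C \norm{\na g}_{L^2(\Om)}.
\end{equation*}

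Now \eqref{eq:Fourth weak} is exactly the weak formulation \eqref{eq: weak} with right-hand side $F$. The first part of \Cref{prop: elliptic} therefore yields a unique solution $w \in H^2_0(\Om)$ with
\begin{equation*}
    \norm{w}_{H^2(\Om)} \le C \norm{F}_{H^{-2}(\Om)} \le C \norm{\na g}_{L^2(\Om)},
\end{equation*}
and the first claim follows upon combining this with the norm equivalence from \Cref{lemm:h2_norm_equivalence}.

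For the convex case, since we already established $F \in H^{-1}(\Om)$ with $\norm{F}_{H^{-1}(\Om)} \le \norm{\na g}_{L^2(\Om)}$, the second statement of \Cref{prop: elliptic} applies and immediately gives $w \in H^3(\Om) \cap H^2_0(\Om)$ together with
\begin{equation*}
    \norm{w}_{H^3(\Om)} \le C \norm{F}_{H^{-1}(\Om)} \le C \norm{\na g}_{L^2(\Om)}.
\end{equation*}
There is no real obstacle here; the only point that needs care is resisting the temptation to integrate by parts and write $F = -\Delta g$, which is not well-defined for $g \in H^1(\Om) \setminus H^1_0(\Om)$. Working with the functional $F$ directly avoids this issue and makes both bounds immediate consequences of \Cref{prop: elliptic}.
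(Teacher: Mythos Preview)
Your proof is correct and is precisely the argument the paper has in mind: the corollary is stated immediately after \Cref{prop: elliptic} with the words ``We immediately obtain the following corollary'' and no further proof, and what you wrote is exactly the natural way to spell out that implication. Your observation that one should treat the right-hand side as a functional in $H^{-1}(\Om)$ rather than integrate by parts is apt, though for the first estimate one could alternatively test \eqref{eq:Fourth weak} with $\varphi=w$ and use Poincar\'e to obtain $\|\Delta w\|_{L^2(\Om)}\le C\|\nabla g\|_{L^2(\Om)}$ directly.
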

If $\Om$ is polygonal, but not necessarily convex, the results of 
  \cite{bacuta_shift_2002,blum_boundary_1980}
     yield, that there exists 
    some $\alpha_\Om > 0$, such that for any $0 \le \alpha < \alpha_\Om$, the solution $w$ of \eqref{eq: weak}
    for the data $f \in H^{-2 + \alpha}(\Om)$ satisfies a bound
    \begin{equation*}
        \|w\|_{H^{2 + \alpha}(\Om)} \le C \norm{f}_{H^{-2+\alpha}(\Om)}.
    \end{equation*}
We consider also an associated eigenvalue problem,  the so called buckling eigenvalue problem
\begin{equation}\label{eq:egenv}
\begin{aligned}
	\Delta^2 w &= -\lambda \Delta w && \text{in } \Omega,\\
	w = \partial_n w &=0 && \text{on } \partial \Omega,
\end{aligned}
\end{equation}
see, e.g., \cite{Wieners:1996,Antunes:2011}.

\begin{lemma}\label{lemma:eigen}
There exists a sequence of eigenvalues of \eqref{eq:egenv}
\[
0 < \lambda_1 \le \lambda_2 \le \dots, \quad \text{with } \lambda_k \to \infty, \;\text{for } k \to \infty
\]
and the corresponding eigenfunctions $w_k \in H^2_0(\Omega)$, which are orthogonal with respect to both $H^2_0(\Omega)$ and $H^1_0(\Omega)$ inner products, i.e.
\[
\Oprod{\Delta w_k,\Delta w_j} = \Oprod{\nabla w_k,\nabla w_j} = 0 \quad \text{for } k \neq j
\]
and form a basis for both $H^2_0(\Omega)$ and $H^1_0(\Omega)$.
\end{lemma}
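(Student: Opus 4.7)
The plan is to reduce the buckling eigenvalue problem to a spectral problem for a compact, self-adjoint, positive operator on a Hilbert space, and then invoke the classical spectral theorem for compact self-adjoint operators. I would equip $H^2_0(\Omega)$ with the inner product $a(u,v) := \Oprod{\Delta u,\Delta v}$, which by Lemma \ref{lemm:h2_norm_equivalence} is equivalent to the standard $H^2$ inner product.

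Next, I would introduce the solution operator $T\colon H^2_0(\Omega) \to H^2_0(\Omega)$, defined by letting $Tf$ be the unique solution, provided by Corollary \ref{cor:biharmonic}, of $\Oprod{\Delta (Tf),\Delta\varphi} = \Oprod{\na f,\na \varphi}$ for all $\varphi \in H^2_0(\Omega)$. Symmetry of the right-hand side immediately gives self-adjointness of $T$ with respect to $a$, and positivity follows from $a(Tf,f) = \|\na f\|_{L^2(\Omega)}^2$, which is strictly positive whenever $f \ne 0$ by the Poincar\'e inequality. Compactness of $T$ is then obtained by combining the stability bound $\|Tf\|_{H^2_0(\Omega)} \le C\|\na f\|_{L^2(\Omega)}$ of Corollary \ref{cor:biharmonic} with the compact Rellich-Kondrachov embedding $H^2_0(\Omega) \hookrightarrow H^1_0(\Omega)$: any bounded sequence in $H^2_0(\Omega)$ admits a subsequence converging in $H^1_0(\Omega)$, whose image under $T$ is then Cauchy in $H^2_0(\Omega)$.

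The spectral theorem then yields an $a$-orthonormal basis $\{w_k\}$ of $H^2_0(\Omega)$ with $T w_k = \mu_k w_k$, where $\mu_k > 0$ and $\mu_k \to 0$. Setting $\lambda_k := 1/\mu_k$ and reordering so the sequence is increasing produces $0 < \lambda_1 \le \lambda_2 \le \ldots$ with $\lambda_k \to \infty$, while unfolding the definition of $T$ shows $\Oprod{\Delta w_k, \Delta \varphi} = \lambda_k \Oprod{\na w_k, \na \varphi}$ for all $\varphi \in H^2_0(\Omega)$, i.e. the weak form of \eqref{eq:egenv}. The $H^2_0$-orthogonality is just the $a$-orthogonality, and the $H^1_0$-orthogonality for $k \neq j$ follows from $\Oprod{\na w_k, \na w_j} = a(T w_k, w_j) = \mu_k a(w_k, w_j) = 0$.

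The part that requires the most care is the basis property in $H^1_0(\Omega)$, since the eigenvalue problem is naturally posed only on $H^2_0(\Omega)$. Here I would argue that finite linear combinations of the $w_k$ are dense in $H^2_0(\Omega)$ by the spectral theorem, hence dense in $H^1_0(\Omega)$ by continuity of the embedding together with the fact that $C^\infty_c(\Omega) \subset H^2_0(\Omega)$ is already dense in $H^1_0(\Omega)$. Combined with the $H^1_0$-orthogonality shown above, and after appropriate normalization, this yields the desired orthogonal basis of $H^1_0(\Omega)$.
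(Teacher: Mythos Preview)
Your proof is correct and uses the same core idea as the paper: reduce to the spectral theorem for a compact self-adjoint solution operator $T$ associated with the biharmonic problem. The only difference is the choice of ambient Hilbert space. The paper defines $T\colon H^1_0(\Omega)\to H^1_0(\Omega)$ (with values in $H^2_0(\Omega)$), checks that $T$ is symmetric and positive with respect to the $H^1_0$ inner product, and obtains compactness from the embedding $H^2_0(\Omega)\hookrightarrow H^1_0(\Omega)$; the spectral theorem then directly yields an orthonormal basis of $H^1_0(\Omega)$, and the $H^2_0$-orthogonality and basis property are left implicit. You instead work on $(H^2_0(\Omega),a)$, which gives the $H^2_0$-basis for free and forces you to supply the density argument for $H^1_0(\Omega)$---which you do correctly. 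Each choice makes one of the two basis statements immediate and leaves the other as a short additional step; both are equally valid realizations of the same argument.
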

\begin{proof}
We consider a bounded linear operator $T \colon H^1_0(\Omega) \to H^1_0(\Omega)$ defined as follows:
\begin{equation}\label{opeartor:T}
	T(g)\in H^2_0(\Omega) \quad:\quad \Oprod{\Delta T(g), \Delta \phi} 
  = \Oprod{\nabla g,\nabla \phi} \quad \text{for all } \phi \in H^2_0(\Omega).
\end{equation}
Note, that by \Cref{cor:biharmonic} for every $g \in H^1_0(\Omega)$, $T(g)\in H^2_0(\Omega)$ is well-defined. Since $H^2_0(\Omega)$ is compactly embedded into $H^1_0(\Omega)$ the operator $T \colon H^1_0(\Omega) \to H^1_0(\Omega)$ is compact. It is straightforward to check that $T$ is symmetric with respect to the $H^1_0(\Omega)$ inner product, i.e. 
\[
\Oprod{\nabla T(g), \nabla \phi} = \Oprod{\nabla g, \nabla T(\phi)} 
\quad \text{for all } g,\phi \in H^1_0(\Omega),
\]
and positive definite, since
\[
\Oprod{\nabla T(g), \nabla g} = \|\Delta T(g)\|^2_{L^2(\Omega)} \ge 0
\]
and $\Oprod{\nabla T(g), \nabla g} =0$ implies $T(g)=0$ and thus $g=0$. Thus, $T$ is also injective. We apply the
spectral theorem for compact normal operators, see, e.g., \cite[Theorem 12.12]{Alt:2016}, and obtain the result of the lemma.
\end{proof}

\begin{remark}
The image of the operator $T \colon H^1_0(\Omega) \to H^1_0(\Omega)$ defined in \eqref{opeartor:T} is given as
\[
\operatorname{Im}(T) = \Set{w \in H^2_0(\Omega) | \text{there exists } g \in H^1_0(\Omega) \text{ fulfilling~\eqref{eq:Fourth weak}}}.
\]
By \Cref{cor:biharmonic} there holds $\operatorname{Im}(T) = H^3(\Omega)\cap H^2_0(\Omega)$ if $\Omega$ is convex. Since the operator $T$ is injective we can define the inverse
\[
T^{-1} \colon \operatorname{Im}(T) \to H^1_0(\Omega),
\]
which fulfills
\[
\Oprod{\nabla T^{-1}(w),\nabla \phi} = \Oprod{\Delta w, \Delta \phi} 
\quad \text{for all } \phi \in H^2_0(\Omega).
\]
In the numerical analysis below, we use an operator $A_h$ \eqref{eq:Ah}, which is a discrete analog of  $T^{-1}$.
\end{remark}

\subsection{Results for time-dependent problems}
We call $\psi$ a weak solution of \eqref{eq:transient:Fourth} if 
\begin{equation}\label{eq:XXdefinition}
\psi \in \XX := \Set{v \in L^2(I;H^2_0(\Omega)) | \partial_t \Delta v \in L^2(I;H^{-2}(\Omega))}
\end{equation}
fulfills
\begin{equation}\label{eq:transient_biharmonic_weak}
    \begin{aligned}
	-\langle\partial_t \Delta \psi, \varphi\rangle+(\Delta \psi, \Delta \varphi)_{\IOm} &= \langle f, \varphi \rangle \quad &\text{for all }\; \varphi\in \YY := L^2(I;H^2_0(\Omega)), \\
	\psi(0) &= \psi_0. &
    \end{aligned}
\end{equation}
Here, $\langle \cdot, \cdot \rangle$ denotes the duality product between $L^2(I;H^{-2}(\Omega))$ 
and $L^2(I;H^2_0(\Omega))$.

Before we prove the existence and regularity results for \eqref{eq:transient_biharmonic_weak}, we 
    show two technical results regarding continuity in time, if some Hilbert space regularities are known.
\begin{lemma}\label{lemm:H1_continuity}
    Let $\psi \in L^2(I;H^2_0(\Om))$ satisfy $\partial_t \Delta \psi \in L^2(I;H^{-2}(\Om))$.
    Then (after a possible redefinition on a zero measure subset of $I$)
    $\psi \in C(\bar I;H^1_0(\Om))$ and there holds a bound
    \begin{equation*}
        \norm{\psi}_{C(\bar I;H^1_0(\Om))} 
        \le C \left( \norm{\psi}_{L^2(I;H^2_0(\Om))} 
        + \norm{\partial_t \Delta \psi}_{L^2(I;H^{-2}(\Om))}\right).
    \end{equation*}
    Thus the space $\XX$ defined in \eqref{eq:XXdefinition} satisfies the continuous embedding:
    $\XX \hookrightarrow C(\bar I;H^1_0(\Om))$.
\end{lemma}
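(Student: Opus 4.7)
My plan is to reduce the statement to a classical abstract embedding theorem by working with $v := -\Delta \psi$ instead of $\psi$ directly. By the hypothesis $\psi \in L^2(I;H^2_0(\Om))$, \Cref{lemm:h2_norm_equivalence} gives $v \in L^2(I;L^2(\Om))$, and the additional assumption rewrites as $\partial_t v \in L^2(I;H^{-2}(\Om))$. I would then apply the standard Lions-Magenes embedding for the Gelfand triple $L^2(\Om) \hookrightarrow H^{-1}(\Om) \hookrightarrow H^{-2}(\Om)$, where $H^{-1}(\Om)$ is taken as the pivot Hilbert space, equipped with the inner product induced by the Riesz isomorphism $-\Delta\colon H^1_0(\Om) \to H^{-1}(\Om)$. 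This yields (after possible redefinition on a null set) $v \in C(\bar I; H^{-1}(\Om))$ together with the continuous bound
\begin{equation*}
\|v\|_{C(\bar I;H^{-1}(\Om))} \le C \left( \|v\|_{L^2(I;L^2(\Om))} + \|\partial_t v\|_{L^2(I;H^{-2}(\Om))}\right).
\end{equation*}

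Next, since $-\Delta\colon H^1_0(\Om) \to H^{-1}(\Om)$ is an isomorphism, I recover $\psi = (-\Delta)^{-1} v$ and transport the regularity and the bound to conclude $\psi \in C(\bar I; H^1_0(\Om))$ with the norm estimate claimed in the lemma (using $\|v\|_{L^2(I;L^2)} \le C \|\psi\|_{L^2(I;H^2_0)}$, which again is \Cref{lemm:h2_norm_equivalence}). The embedding statement for $\XX$ is then immediate from its definition in \eqref{eq:XXdefinition}.

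The main obstacle is verifying cleanly that $L^2(\Om) \hookrightarrow H^{-1}(\Om) \hookrightarrow H^{-2}(\Om)$ is a Gelfand triple in the precise sense required by the Lions-Magenes embedding, i.e.\ that the Riesz identification of $H^{-1}(\Om)$ with its own dual is compatible with the duality pairing between $L^2$ and $H^{-2}$ used to define $\partial_t v$. A fully equivalent and perhaps more direct path is to invoke the interpolation identity $[L^2(\Om), H^{-2}(\Om)]_{1/2} = H^{-1}(\Om)$ together with the general continuous embedding $L^2(I;X) \cap H^1(I;Y) \hookrightarrow C(\bar I; [X,Y]_{1/2})$; I would choose whichever variant matches the conventions used elsewhere in the paper. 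A more self-contained alternative, avoiding abstract interpolation, is to establish the product rule
\begin{equation*}
\tfrac{d}{dt}\|\nabla \psi(t)\|_{L^2(\Om)}^2 = -2 \langle \partial_t \Delta \psi(t), \psi(t)\rangle_{H^{-2},H^2_0}
\end{equation*}
via a density/mollification argument, which gives absolute continuity of $t \mapsto \|\nabla \psi(t)\|_{L^2}^2$ on $\bar I$ and, after integration and Cauchy-Schwarz in time, the claimed pointwise bound.
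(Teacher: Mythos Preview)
Your proposal is correct, and in fact your third ``self-contained alternative'' (the product rule $\tfrac{d}{dt}\|\nabla\psi\|^2 = -2\langle\partial_t\Delta\psi,\psi\rangle$ via mollification) is precisely the route the paper takes: it mollifies in time, shows $\{\nabla\psi_\varepsilon\}$ is Cauchy in $C(\bar I;L^2(\Om)^2)$ by integrating this identity, and then upgrades to $C(\bar I;H^1_0(\Om))$ by a closedness argument.

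Your primary approach, setting $v=-\Delta\psi$ and invoking the abstract embedding $L^2(I;L^2)\cap H^1(I;H^{-2})\hookrightarrow C(\bar I;H^{-1})$, is genuinely different and more economical once the abstract result is granted. Of your two variants, the interpolation route $[L^2(\Om),H^{-2}(\Om)]_{1/2}=H^{-1}(\Om)$ is the cleaner one; the Gelfand-triple formulation with $H^{-1}(\Om)$ as pivot requires exactly the compatibility check you flag, and it is not entirely innocent (the dual of $L^2(\Om)$ with respect to the $H^{-1}$ pivot is not literally $H^{-2}(\Om)$ without passing through the $-\Delta$ isomorphism). The paper's hands-on mollification argument trades this bookkeeping for a fully self-contained proof and has the side benefit of providing a template reused verbatim in the subsequent \Cref{lemm:H20_continuity}, where the relevant interpolation identity is less standard.
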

\begin{proof}
    We adapt the proof of \cite[Section 5.9, Theorem 3]{Evans:2010}, and for any $\varepsilon > 0$ 
    introduce a mollifier $\eta_\varepsilon$ \revB{in time},
    which leads to the mollified functions $\psi_\varepsilon = \eta_\varepsilon * \psi$.
    \revB{Note that $\psi_\varepsilon \to \psi$ in $L^2(I;H^2_0(\Om))$,
        $\partial_t \Delta \psi_\varepsilon \to \partial_t \Delta \psi$ in $L^2(I;H^{-2}(\Om))$ and 
        $\psi_\varepsilon(t) \to \psi(t)$ in $H^2_0(\Om)$ for a.e. 
        $t \in I$, see \cite[Appendix C, Theorem 7]{Evans:2010}.
    }
    For $\varepsilon,\delta > 0$ it holds due to the smoothness in time of $\psi_\varepsilon,\psi_\delta$ and 
    $\psi(s) \in H^2_0(\Om)$ for a.e. $s \in I$
    \begin{align*}
        \partial_t \left(\norm{\nabla \left(\psi_\varepsilon(\tau) - \psi_\delta(\tau)\right)}_{L^2(\Om)}^2\right)
        &= 2\Oprod{\partial_t \nabla (\psi_\varepsilon(\tau) - \psi_\delta(\tau)),\nabla(\psi_\varepsilon(\tau) - \psi_\delta(\tau))}\\
        &= - 2\Oprod{\partial_t \Delta (\psi_\varepsilon(\tau) - \psi_\delta(\tau)),(\psi_\varepsilon(\tau) - \psi_\delta(\tau))}
    \end{align*}
    for all \revB{$\tau \in I$. By the fundamental theorem of calculus, integrating the above 
    identity over any arbitrary subinterval $(s,t)\subset I$,}
    it holds for all $s,t \in I$
    \begin{equation*}
        \norm{\nabla \left(\psi_\varepsilon(t) - \psi_\delta(t)\right)}_{L^2(\Om)}^2
        =
        \norm{\nabla \left(\psi_\varepsilon(s) - \psi_\delta(s)\right)}_{L^2(\Om)}^2
        - \int_s^t 2\Oprod{\partial_t \Delta (\psi_\varepsilon(\tau) - \psi_\delta(\tau)),(\psi_\varepsilon(\tau) - \psi_\delta(\tau))} \, d \tau.
    \end{equation*}
    \revB{Applying Cauchy-Schwarz inequality to the time integral and extending the domain of 
        integration to the whole $I$ gives
    \begin{equation*}
        \norm{\nabla \left(\psi_\varepsilon(t) - \psi_\delta(t)\right)}_{L^2(\Om)}^2
        \le
        \norm{\nabla \left(\psi_\varepsilon(s) - \psi_\delta(s)\right)}_{L^2(\Om)}^2
        + 2\norm{\partial_t \Delta (\psi_\varepsilon - \psi_\delta)}_{L^2(I;H^{-2}(\Om))}
            \norm{\psi_\varepsilon - \psi_\delta}_{L^2(I;H^2_0(\Om))}.
    \end{equation*}
    Due to $\psi_\varepsilon \to \psi$ a.e. in $I$, we can now} fix some $s \in I$ for 
    which $\psi_\varepsilon(s) \to \psi(s)$ in $H^1_0(\Om)$. 
    \revB{As $\varepsilon, \delta \to 0$, all terms on the right-hand side converge to $0$.
        Moreover, since the right-hand side is independent of $t \in I$, we have thus shown
    \begin{equation*}
        \limsup_{\varepsilon,\delta \to 0} \, \sup_{t \in I} \, 
        \norm{\nabla \left(\psi_\varepsilon(t) - \psi_\delta(t)\right)}_{L^2(\Om)}^2 = 0,
    \end{equation*}
    i.e., $\nabla \psi_\varepsilon$ is a Cauchy sequence in $C(\bar I;L^2(\Om)^2)$.
    Hence, it has a limit in $C(\bar I;L^2(\Om)^2)$, and since $\psi_\varepsilon(t) \to \psi(t)$ in $H^1_0(\Om)$
    for a.e. $t \in I$, a redefinition of $\psi$ on a set of measure zero gives
    }
    \begin{equation}\label{eq:time_continuity_nabla}
        \nabla \psi \in C(\bar I;L^2(\Om)^2).
    \end{equation}
    Now that we have fixed a representation of $\psi$ with $\nabla \psi$ being continuous in time, we have 
    to take special care with further redefinitions on zero sets.
    To conclude the proof, note that $\psi \in L^2(I;H^2_0(\Om))$ especially implies that 
    $\psi(t) \in H^1_0(\Om)$ for a.e. $t \in I$. Hence, for every $t^* \in \bar I$, there exists a sequence 
    $\{t_n\} \subset I$ with $t_n \to t^*$, $\psi(t_n) \in H^1_0(\Om)$ for all $n$ and by 
    \eqref{eq:time_continuity_nabla}
    \begin{equation}\label{eq:nabla_convergence}
     \norm{\nabla \psi(t_n) - \nabla \psi(t^*)}_{L^2(\Om)} \to 0, \ n\to \infty.
    \end{equation}
    Especially $\{\psi(t_n)\}$ is a Cauchy sequence in $H^1_0(\Om)$ and thus 
    $\psi(t_n) \to \psi^* \in H^1_0(\Om)$ for some $\psi^* \in H^1_0(\Om)$.
    From \eqref{eq:nabla_convergence}, we deduce $\nabla \psi(t^*) = \nabla \psi^*$, and especially
    that $\psi^*$ is independent of the explicit choice of the sequence $\{t_n\}$.
    Thus, we can redefine $\psi(t^*) := \psi^*$, without breaking the property \eqref{eq:time_continuity_nabla}.
    \revB{Let us lastly show the proposed estimate. By applying the same arguments as above 
        to $\psi_\varepsilon$ alone, instead of the difference $\psi_\varepsilon - \psi_\delta$, we obtain
        for any $t,s \in I$
    \begin{equation*}
        \norm{\nabla \psi_\varepsilon(t)}_{L^2(\Om)}^2
        = \norm{\nabla \psi_\varepsilon(s)}_{L^2(\Om)}^2
        - \int_s^t 2\Oprod{\partial_t \Delta \psi_\varepsilon(\tau),\psi_\varepsilon(\tau)} \, d \tau.
    \end{equation*}
    Since in the above construction $\psi$ is the pointwise limit of $\psi_\varepsilon$, this implies also
    \begin{align*}
        \norm{\nabla \psi(t)}_{L^2(\Om)}^2
        & = \norm{\nabla \psi(s)}_{L^2(\Om)}^2
        - \int_s^t 2\Opair{\partial_t \Delta \psi(\tau),\psi(\tau)} \, d \tau\\
        & \le 
         \norm{\nabla \psi(s)}_{L^2(\Om)}^2
        + 2\norm{\partial_t \Delta \psi}_{L^2(I;H^{-2}(\Om))} \norm{\psi}_{L^2(I;H^2_0(\Om))}.
    \end{align*}
    Instead of fixing $s$, we now may integrate with respect to $s$, which yields 
    \begin{equation*}
        T \norm{\nabla \psi(t)}_{L^2(\Om)}^2
        = \int_0^T \norm{\nabla \psi(s)}_{L^2(\Om)}^2 \, d s
        + 2 T \norm{\partial_t \Delta \psi}_{L^2(I;H^{-2}(\Om))} \norm{\psi}_{L^2(I;H^2_0(\Om))}.
    \end{equation*}
    Dividing by $T$, applying the estimate $\norm{\nabla v}_{L^2(\Om)} \le \norm{v}_{H^2_0(\Om)}$ for any
    $v \in H^2_0(\Om)$ to the first term on the right-hand side,
    and Youngs inequality to the second term on the right-hand side gives
    \begin{equation*}
        \norm{\nabla \psi(t)}_{L^2(\Om)}^2 
        \le C\left( \norm{\partial_t \Delta \psi}_{L^2(I;H^{-2}(\Om))}^2 + \norm{\psi}_{L^2(I;H^2_0(\Om))}^2\right).
    \end{equation*}
    Taking the square root concludes the proof.
    }
\end{proof}
\begin{lemma}\label{lemm:H20_continuity}
    Let $\psi \in L^2(I;H^2_0(\Om))$ satisfy $\partial_t \psi \in L^2(I;H^{1}_0(\Om))$ 
    and $\mathcal A \psi \in L^2(I;H^{-1}(\Om))$.
    Then (after a possible redefinition on a measure zero subset of $I$)
    $\psi \in C(\bar I;H^2_0(\Om))$ and there holds a bound
    \begin{equation*}
        \norm{\psi}_{C(\bar I;H^2_0(\Om))} 
        \le C \left( \norm{\mathcal A \psi}_{L^2(I;H^{-1}(\Om))} 
        + \norm{\partial_t \psi}_{L^2(I;H^1_0(\Om))}\right).
    \end{equation*}
\end{lemma}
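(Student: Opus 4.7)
The approach is to adapt the mollification argument from the proof of Lemma \ref{lemm:H1_continuity}, but now to lift it to the $H^2_0(\Om)$ level rather than the $H^1_0(\Om)$ level. Denoting by $\psi_\varepsilon = \eta_\varepsilon * \psi$ a temporal mollification, the plan is first to show that $\{\psi_\varepsilon\}$ is Cauchy in $C(\bar I;H^2_0(\Om))$ and thus converges to a continuous representative, and then to extract the stated a priori estimate from essentially the same computation.

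The central identity, valid because the mollified functions are smooth in time and lie spatially in $H^2_0(\Om)$ for every $t$, reads
\[
\partial_t \|\Delta(\psi_\varepsilon - \psi_\delta)(t)\|_{L^2(\Om)}^2
= 2 \Oprod{\Delta \partial_t(\psi_\varepsilon - \psi_\delta), \Delta(\psi_\varepsilon - \psi_\delta)}
= 2 \Opair{\mathcal A(\psi_\varepsilon - \psi_\delta), \partial_t(\psi_\varepsilon - \psi_\delta)},
\]
where the last pairing is to be interpreted as the $H^{-1}(\Om)$--$H^1_0(\Om)$ duality (this reinterpretation is the key step, discussed below). Integrating from $s$ to $t$ and applying Cauchy--Schwarz yields
\[
\sup_{t\in \bar I} \|\Delta(\psi_\varepsilon - \psi_\delta)(t)\|_{L^2(\Om)}^2
\le \|\Delta(\psi_\varepsilon - \psi_\delta)(s)\|_{L^2(\Om)}^2
+ 2 \|\mathcal A(\psi_\varepsilon - \psi_\delta)\|_{L^2(I;H^{-1}(\Om))} \|\partial_t(\psi_\varepsilon - \psi_\delta)\|_{L^2(I;H^1_0(\Om))}.
\]
Choosing $s \in I$ to be a Lebesgue point of $\psi$ viewed as a map into $H^2_0(\Om)$ makes the first term vanish as $\varepsilon,\delta\to 0$, while the second vanishes by standard mollifier properties applied to $\mathcal A \psi$ and $\partial_t\psi$. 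This produces the Cauchy property and, after a redefinition on a set of measure zero exactly as in the proof of Lemma \ref{lemm:H1_continuity}, the desired continuous representative.

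For the norm bound I would apply the same integrated identity to $\psi_\varepsilon$ itself (rather than the difference) and then average over the base point $s \in I$ to avoid any pointwise dependence, obtaining
\[
|I| \sup_{t \in \bar I} \|\Delta \psi_\varepsilon(t)\|_{L^2(\Om)}^2
\le \|\psi_\varepsilon\|_{L^2(I;H^2_0(\Om))}^2
+ 2 |I| \|\mathcal A \psi_\varepsilon\|_{L^2(I;H^{-1}(\Om))} \|\partial_t \psi_\varepsilon\|_{L^2(I;H^1_0(\Om))}.
\]
Passing to $\varepsilon \to 0$ and using the isomorphism $\mathcal A \colon H^2_0(\Om) \to H^{-2}(\Om)$ from Proposition \ref{prop: elliptic} together with the continuous embedding $H^{-1}(\Om) \hookrightarrow H^{-2}(\Om)$ absorbs the $L^2(I;H^2_0(\Om))$ contribution into $\|\mathcal A \psi\|_{L^2(I;H^{-1}(\Om))}$ and yields the claimed bound.

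The main technical obstacle is the passage from the $H^{-2}(\Om)$--$H^2_0(\Om)$ duality in which the calculus identity for $\partial_t \|\Delta \cdot\|^2$ naturally lives to the sharper $H^{-1}(\Om)$--$H^1_0(\Om)$ duality needed to invoke the assumed regularities $\mathcal A \psi \in L^2(I;H^{-1}(\Om))$ and $\partial_t \psi \in L^2(I;H^1_0(\Om))$. Since $\mathcal A \psi_\varepsilon = \eta_\varepsilon * \mathcal A \psi$ inherits membership in $L^2(I;H^{-1}(\Om))$ and $H^2_0(\Om) \subset H^1_0(\Om)$ is dense-enough for the two pairings to agree on $H^2_0(\Om)$-test functions, this identification is justified by a continuity/approximation argument, but it must be set up carefully since neither $\Delta \partial_t(\psi_\varepsilon - \psi_\delta)$ nor its test partner are a priori controlled in the norms actually appearing in the final estimate.
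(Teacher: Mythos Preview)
Your proposal is correct and follows essentially the same mollification-and-Cauchy route as the paper. The paper sidesteps what you flag as the main technical obstacle by observing that $\partial_t \psi_\varepsilon = \psi * (\partial_t \eta_\varepsilon)$ lies in $H^2_0(\Om)$ for every $t$ (since $\psi(t) \in H^2_0(\Om)$ a.e.), so the identity $\Oprod{\Delta \partial_t \psi_\varepsilon, \Delta \psi_\varepsilon} = \Opair{\mathcal A \psi_\varepsilon, \partial_t \psi_\varepsilon}_{H^{-1},H^1_0}$ follows directly from the definition of $\mathcal A$ and the compatibility of the two dual pairings, with no separate density argument needed.
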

\begin{proof}
  Similarly to the proof of \Cref{lemm:H1_continuity}, we consider the mollified functions 
  $\psi_\varepsilon = \psi*\eta_\varepsilon$. As 
  $\partial_t(\psi*\eta_\varepsilon) = \psi *(\partial_t \eta_\varepsilon)$,
  and $\psi(t) \in H^2_0(\Om)$ for 
  a.e. $t \in I$, it holds $\partial_t(\psi*\eta_\varepsilon) \in H^2_0(\Om)$ for all $t \in I$, and thus
  we obtain via integration by parts:
  \begin{align*}
        \partial_t \left(\norm{\Delta \left(\psi_\varepsilon(t) - \psi_\delta(t)\right)}_{L^2(\Om)}^2\right)
        &= 2\Oprod{\partial_t \Delta (\psi_\varepsilon(t) - \psi_\delta(t)),\Delta(\psi_\varepsilon(t) - \psi_\delta(t))}\\
        &= 2\revB{\Opair{\partial_t (\psi_\varepsilon(t) - \psi_\delta(t)), \mathcal A(\psi_\varepsilon(t) - \psi_\delta(t))}}.
  \end{align*}
   By the fundamental theorem of calculus, it holds for all $s,t \in I$
    \begin{equation*}
        \norm{\Delta \left(\psi_\varepsilon(t) - \psi_\delta(t)\right)}_{L^2(\Om)}^2
        =
        \norm{\Delta \left(\psi_\varepsilon(s) - \psi_\delta(s)\right)}_{L^2(\Om)}^2
        + \int_s^t 2\revB{\Opair{\partial_t (\psi_\varepsilon(\tau) - \psi_\delta(\tau)),
            \mathcal A(\psi_\varepsilon(\tau) - \psi_\delta(\tau))}} \, d \tau.
    \end{equation*}
    Hence, fixing any $s \in I$ with $\Delta \psi_\varepsilon (s) \to \Delta \psi(s)$ in $L^2(\Om)$:
    \begin{equation*}
        \limsup_{\varepsilon,\delta \to 0} \, \sup_{t \in I} \,
        \norm{\Delta \left(\psi_\varepsilon(t) - \psi_\delta(t)\right)}_{L^2(\Om)}^2
        \le \lim_{\varepsilon,\delta \to 0} \norm{\partial_t  (\psi_\varepsilon - \psi_\delta)}_{L^2(I;H^1_0(\Om))}\norm{\mathcal A(\psi_\varepsilon - \psi_\delta)}_{L^2(I;H^{-1}(\Om))}=0.
    \end{equation*}
    Thus, we obtain as before, that 
    \begin{equation*}
      \Delta \psi \in C(\bar I;L^2(\Om)).
    \end{equation*}
    As in the proof of \Cref{lemm:H1_continuity}, using this result, and the fact that 
    $H^2_0(\Om) \subset H^2(\Om)$ is closed, we deduce that in fact $\psi \in C(\bar I;H^2_0(\Om))$.
\end{proof}

For \eqref{eq:transient_biharmonic_weak} we have the following existence and stability result. 
\begin{theorem}[Existence, uniqueness and stability]\label{thm:existence_uniqueness_standard_regularity}
Let $f \in L^2(I;H^{-2}(\Om))$ and $\psi_0 \in H^1_0(\Om)$. Then there exists a unique solution $\psi\in \XX$ to \eqref{eq:transient_biharmonic_weak}. Moreover, $\psi\in C(\bar{I}; H^1_0(\Om))$ and the following estimate holds
$$
\|\partial_t \Delta \psi\|_{L^2(I;H^{-2}(\Omega))} 
+ \|\psi\|_{L^2(I;H^2_0(\Omega))}
+\|\psi\|_{C(\bar{I}; H^1_0(\Om))}\le C\left(\|f\|_{L^2(I;H^{-2}(\Om))}+\|\na \psi_0\|_{L^2(\Om)}\right).
$$
\end{theorem}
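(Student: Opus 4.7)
The plan is to construct the solution by a Faedo--Galerkin method based on the buckling eigenfunctions $\{w_k\}_{k \ge 1}$ from \Cref{lemma:eigen}, which form an orthogonal basis of both $H^1_0(\Om)$ (with inner product $(\na\cdot,\na\cdot)$) and $H^2_0(\Om)$ (with inner product $(\Delta\cdot,\Delta\cdot)$). For each $N \in \N$ I consider the ansatz $\psi_N(t) = \sum_{k=1}^N c_k(t) w_k$, determine the coefficients by testing \eqref{eq:transient_biharmonic_weak} against each $w_j$ with $j \le N$, and prescribe $c_j(0)$ as the $H^1_0$-Fourier coefficient of $\psi_0$. Thanks to the double orthogonality $(\na w_k,\na w_j) = (\Delta w_k,\Delta w_j) = 0$ for $k \ne j$, the Galerkin system decouples into independent scalar linear ODEs of the form $\|\na w_j\|_{L^2(\Om)}^2 (\dot c_j + \lambda_j c_j) = \langle f, w_j\rangle$, each of which is uniquely solvable, so $\psi_N$ is well-defined.

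The next step is to derive uniform bounds. Testing the Galerkin equation with $\psi_N$ itself and identifying $-\langle \partial_t \Delta \psi_N, \psi_N\rangle = \tfrac{1}{2} \partial_t \|\na \psi_N\|_{L^2(\Om)}^2$ via integration by parts yields the energy identity
\begin{equation*}
    \tfrac{1}{2}\partial_t \|\na \psi_N\|_{L^2(\Om)}^2 + \|\Delta \psi_N\|_{L^2(\Om)}^2 = \langle f, \psi_N\rangle.
\end{equation*}
Estimating the right-hand side by $\|f\|_{H^{-2}(\Om)} \|\psi_N\|_{H^2_0(\Om)}$, using the norm equivalence of \Cref{lemm:h2_norm_equivalence} together with Young's inequality, and integrating in time produces
\begin{equation*}
    \|\psi_N\|_{L^\infty(I;H^1_0(\Om))}^2 + \|\psi_N\|_{L^2(I;H^2_0(\Om))}^2 \le C\left(\|f\|_{L^2(I;H^{-2}(\Om))}^2 + \|\na \psi_0\|_{L^2(\Om)}^2\right),
\end{equation*}
where the initial contribution is controlled by Bessel's inequality applied to the $H^1_0$-projection that defines $\psi_N(0)$. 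The bound for $\|\partial_t \Delta \psi_N\|_{L^2(I;H^{-2}(\Om))}$ then follows by inserting an arbitrary $\varphi \in L^2(I;H^2_0(\Om))$ into the Galerkin equation and estimating the two remaining terms.

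Finally, weak compactness extracts a (not relabeled) subsequence $\psi_N \rightharpoonup \psi$ in $\XX$. Passing to the limit in the weak form against test functions from the span of the $w_j$, followed by a density argument in $L^2(I;H^2_0(\Om))$, shows that $\psi$ satisfies \eqref{eq:transient_biharmonic_weak}. The initial condition $\psi(0) = \psi_0$ is recovered via the continuous embedding $\XX \hookrightarrow C(\bar I; H^1_0(\Om))$ from \Cref{lemm:H1_continuity}, combined with the fact that $\psi_N(0) \to \psi_0$ in $H^1_0(\Om)$ by construction. Uniqueness follows from applying the energy identity with $f = 0$ and $\psi_0 = 0$, which forces $\na \psi \equiv 0$. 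The stability bound transfers to $\psi$ by weak lower semicontinuity of the norms involved, and the claimed continuity in $H^1_0(\Om)$ is precisely \Cref{lemm:H1_continuity}. The main technical subtlety lies in rigorously justifying the identity $-\langle \partial_t \Delta \psi, \psi\rangle = \tfrac{1}{2} \partial_t \|\na \psi\|_{L^2(\Om)}^2$ for a general element of $\XX$ (needed for both the stability bound on the limit and the uniqueness argument), but this is already handled by the mollification scheme deployed in the proof of \Cref{lemm:H1_continuity}.
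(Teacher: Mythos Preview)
Your proof is correct and follows essentially the same Galerkin approach as the paper, based on the buckling eigenfunctions of \Cref{lemma:eigen} and their double orthogonality in $H^1_0(\Om)$ and $H^2_0(\Om)$. The only minor presentational differences are that the paper spells out explicitly the $H^2_0$-stability of the $H^1_0$-projection onto $V_N$ (which is what you need when estimating $\|\partial_t \Delta \psi_N\|_{H^{-2}}$ against test functions not in $V_N$---your phrase ``inserting an arbitrary $\varphi$ into the Galerkin equation'' elides this step), and verifies the initial condition by integrating by parts against $\varphi \in C^1(\bar I;H^2_0(\Om))$ with $\varphi(T)=0$ rather than via weak continuity of the trace map $\XX \to H^1_0(\Om)$.
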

\begin{proof}
The proof uses a Galerkin procedure similar to the proof of \cite[Section 7.1.2, Theorem 3]{Evans:2010} in the case of the heat equation. We use the eigenfunctions $w_k \in H^2_0(\Omega)$ from \Cref{lemma:eigen} and normalize them such that $\|\nabla w_k\|_{L^2(\Omega)} = 1$. For $n \in \N$ we consider the space
\[
V_n = \operatorname{span}\set{w_k | k=1,2,\dots, n}
\] 
and a Galerkin approximation to \eqref{eq:transient_biharmonic_weak} given as $\psi_n \colon \bar I \to V_n$ and fulfilling
\begin{equation}\label{eq:galerkin_approximation}
\begin{aligned}
    \Oprod{\nabla \partial_t \psi_n(t), \nabla v_n} + \Oprod{\Delta \psi_n(t),\Delta v_n} 
    &= \langle f(t), v_n \rangle_{H^{-2}(\Omega),H^2_0(\Omega)} 
    \quad &&\text{for all }\; v_n \in V_n \text{ and a.a. } t \in I\\
\Oprod{\nabla \psi_n(0),\nabla v_n} &= \Oprod{\nabla \psi_0,\nabla v_n} \quad &&\text{for all }\; v_n \in V_n.
\end{aligned}
\end{equation}
The existence and uniqueness of a solution of this system of ODEs follows directly.
    The initial condition in \eqref{eq:galerkin_approximation}, can compactly be written as 
    \begin{equation}\label{eq:galerkin_approximation_initial_condition}
        \psi_n(0) = P_n \psi_0,
    \end{equation}
    where $P_n \colon H^1_0(\Omega) \to V_n$ is the orthogonal projection defined by
\begin{equation}\label{eq:definition_Pn}
\Oprod{\nabla P_n v,\nabla v_n} = \Oprod{\nabla v,\nabla v_n}
\qquad \text{for all  $v_n \in V_n$}.
\end{equation}
It is straightforward to check, that this projection is stable in $H^1_0(\Om)$, i.e., there holds 
    \begin{equation}\label{eq:stability_Pn}
        \|P_n v\|_{H^1_0(\Om)} \le \|v\|_{H^1_0(\Om)}.
    \end{equation}
Testing \eqref{eq:galerkin_approximation} with $v_n= \psi_n(t)$ we obtain for a.e. $t \in I$
$$
\frac{1}{2}\frac{d}{dt}\|\na \psi_n(t)\|^2_{L^2(\Om)}+\|\Delta \psi_n(t)\|^2_{L^2(\Om)} = \langle f(t), \psi_n(t) \rangle_{H^{-2}(\Omega),H^2_0(\Omega)} \le \|f(t)\|_{H^{-2}(\Om)} \|\psi_n(t)\|_{H^2(\Om)}.
$$
Integrating in time over $(0,t)$ for arbitrary $t \in I$,
and  using  the fact that due to \Cref{lemm:h2_norm_equivalence} 
$\|\psi_n(t)\|_{H^2_0(\Om)} =\|\Delta \psi_n(t)\|_{L^2(\Om)}$ we obtain
after an application of the Young's inequality
$$
\frac{1}{2} \|\na \psi_n(t)\|^2_{L^2(\Om)}+ \|\psi_n\|^2_{L^2((0,t);H^2_0(\Omega))}
\le \frac{1}{2}\|\na \psi_n(0)\|^2_{L^2(\Om)}
+\frac{1}{2} \|\psi_n\|^2_{L^2((0,t);H^2_0(\Omega))} +\frac{1}{2}\|f\|^2_{L^2((0,t);H^{-2}(\Om))}.
$$
Using the fact that, due to \eqref{eq:stability_Pn} it holds
$\|\nabla \psi_n(0)\|^2_{L^2(\Om)} \le \|\nabla \psi_0\|^2_{L^2(\Om)}$, 
and $t \in I$ was arbitrary, we get
\begin{equation}\label{eq: continuous estimate 1}
    \|\na \psi_n\|^2_{L^\infty(I;L^2(\Om))}+ \|\psi_n\|^2_{L^2(I;H^2_0(\Omega))}
    \le \|\na \psi_0\|^2_{L^2(\Om)} +\|f\|^2_{L^2(I;H^{-2}(\Om))}.
\end{equation}
In the next step we show the boundedness of $\|\partial_t \Delta \psi_n\|_{L^2(I;H^{-2}(\Omega))}$.
By the orthonormality in $H^1_0(\Om)$ of the basis we get for every $v \in H^1_0(\Omega)$
\[
v = \sum_{k=1}^\infty \Oprod{\nabla v,\nabla w_k} w_k \quad \text{and} \quad P_n v 
= \sum_{k=1}^n \Oprod{\nabla v,\nabla w_k} w_k.
\]
Moreover, if $v \in H^2_0(\Om)$ and $v_n \in V_n$, then by orthogonality of $\{w_k\}$ in $H^2_0(\Om)$:
  \begin{equation}\label{eq:discrete_tested_with_continuous}
  \Oprod{\Delta v_n,\Delta v} 
  = \Oprod{\Delta v_n,\sum_{k=1}^{\infty}\Delta w_k \Oprod{\na v,\na w_k}}
  = \Oprod{\Delta v_n,\sum_{k=1}^{n}\Delta w_k \Oprod{\na v,\na w_k}}
  = \Oprod{\Delta v_n,\Delta P_nv}.
\end{equation}
By the same arguments, we obtain for any $v\in H^2_0(\Omega)$ 
\[
\|P_n v\|_{H^2_0(\Omega)}^2 = \|\Delta P_n v\|^2_{L^2(\Omega)} 
= \sum_{k=1}^n\Oprod{\nabla v,\nabla w_k}^2 \|\Delta w_k\|^2_{L^2(\Omega)} 
\le \sum_{k=1}^\infty\Oprod{\nabla v,\nabla w_k}^2 \|\Delta w_k\|^2_{L^2(\Omega)} 
= \|\Delta v\|^2_{L^2(\Omega)}.
\]
Let $v \in H^2_0(\Omega)$ be arbitrary. As $\psi_n(t) \in V_n$ a.e. in $I$, there holds 
$\partial_t \psi_n(t) \in V_n$ a.e. in $I$, and thus for a.e. $t \in I$, we get
\[
\begin{aligned}
(\partial_t \Delta \psi_n(t),v)_\Omega &= -(\nabla \partial_t \psi_n(t),\nabla v)_\Omega = -(\nabla \partial_t \psi_n(t),\nabla P_n v)_\Omega\\
&= (\Delta \psi_n(t),\Delta P_n v)_\Omega - \langle f(t), P_n v \rangle_{H^{-2}(\Omega),H^2_0(\Omega)}\\
&\le \|\Delta \psi_n(t)\|_{L^2(\Omega)} \|\Delta P_n v\|_{L^2(\Omega)} 
+ \|f(t)\|_{H^{-2}(\Om)} \|P_n v\|_{H^2_0(\Om)}\\
& \le \left(\|\psi_n(t)\|_{H^2_0(\Omega)} + \|f(t)\|_{H^{-2}(\Om)} \right) \|v\|_{H^2_0(\Omega)}.
\end{aligned}
\]
Thus for a.e. $t \in I$, there holds
\[
\|\partial_t \Delta \psi_n(t)\|_{H^{-2}(\Omega)} \le \|\psi_n(t)\|_{H^2_0(\Omega)} + \|f(t)\|_{H^{-2}(\Om)}.
\]
Squaring, integrating w.r.t $t$ and using \eqref{eq: continuous estimate 1}, we get
\begin{equation}\label{eq:galerkin_approx_bound_dt}
    \|\partial_t \Delta \psi_n\|_{L^2(I;H^{-2}(\Omega))} 
    \le  C\left(\|\na \psi_0\|_{L^2(\Om)} +\|f\|_{L^2(I;H^{-2}(\Om))}\right).
\end{equation}
From these estimates we obtain existence of a subsequence of $\psi_n$ (denoted again by $\psi_n$) and some $\psi \in \XX$ such that
\begin{equation}\label{eq:weak_convergence_galerkin_approx}
\psi_n \rightharpoonup \psi \text{ in } L^2(I;H^2_0(\Omega)) \;\text{ and }\; \partial_t \Delta \psi_n \rightharpoonup \partial_t \Delta \psi \text{ in } L^2(I;H^{-2}(\Omega)).
\end{equation}
To conclude the proof, let $\varphi^N \in C^1(\bar I;H^2_0(\Om))$ be of the form 
$\varphi^N = \sum_{j=1}^N \zeta_j(t)w_j$ where $N \in \N$ is fixed and $\zeta_i(t)\in C^1(\bar{I})$.
Then, testing the first equation of \eqref{eq:galerkin_approximation} for $n > N$
with $\varphi^N(t)$ and integrating over $I$ yields
\[
    -\IOpair{\partial_t \Delta \psi_n, \varphi^N}+\IOprod{\Delta \psi_n, \Delta \varphi^N}
        = \langle f, \varphi^N \rangle.
\]
Using the weak convergences \eqref{eq:weak_convergence_galerkin_approx} thus yields
\[
    -\IOpair{\partial_t \Delta \psi, \varphi^N}+\IOprod{\Delta \psi, \Delta \varphi^N}
        = \langle f, \varphi^N \rangle.
\]
As functions of the form of $\varphi^N$ are dense in $\YY=L^2(I;H^2_0(\Omega))$, we have just shown the first equation of 
\eqref{eq:transient_biharmonic_weak}.
To show, that the initial condition is   satisfied, let $\varphi \in C^1(\bar I;H^2_0(\Om))$ be arbitrary
with $\varphi(T) = 0$.
Testing the first equation of \eqref{eq:transient_biharmonic_weak} with $\varphi$, yields after integration
by parts in time and space, that 
\[
    \revB{-\IOprod{\nabla \psi, \partial_t \nabla \varphi}}+\IOprod{\Delta \psi, \Delta \varphi}
    = \langle f, \varphi \rangle + \Oprod{\nabla \psi(0), \nabla \varphi(0)}.
\]
Moreover, using integration by parts in time, $\varphi(T) = 0$,
the definition of $P_n$ \eqref{eq:definition_Pn}, as well as
\eqref{eq:discrete_tested_with_continuous} applied to every $t \in I$, we have
\begin{align*}
    \revB{-\IOprod{\nabla \psi_n, \partial_t \nabla \varphi}}+\IOprod{\Delta \psi_n, \Delta \varphi}
    &=\revB{\IOprod{\partial_t \nabla \psi_n, \nabla \varphi}}+\IOprod{\Delta \psi_n, \Delta \varphi}
    + \Oprod{\nabla \psi_n(0), \nabla \varphi(0)}\\
    &=\revB{\IOprod{\partial_t \nabla \psi_n, \nabla P_n \varphi}}+\IOprod{\Delta \psi_n, \Delta P_n \varphi}
    + \Oprod{\nabla \psi_n(0), \nabla \varphi(0)}\\
    &= \IOpair{ f, P_n\varphi } + \Oprod{\nabla \psi_n(0), \nabla \varphi(0)}\\
    &= \IOpair{ f, P_n\varphi } + \Oprod{\nabla P_n\psi_0, \nabla \varphi(0)}.
\end{align*}
Hence, subtracting the two equations, and taking the limit yields due to the weak convergence 
\eqref{eq:weak_convergence_galerkin_approx} and \eqref{eq:stability_Pn}
\[
    0= \lim_{n\to\infty} \left[ \IOpair{ f, \varphi - P_n \varphi} +
     \Oprod{\nabla(\psi(0) - P_n\psi_0),\nabla\varphi(0)}\right].
     \]
Using the convergence $P_n \varphi(t) \to \varphi(t)$ in $H^2_0(\Om)$ for all $t \in I$, together
with dominated convergence, as well as $P_n \psi_0 \to \psi_0$ in $H^1_0(\Om)$ yields
\begin{equation*}
    \Oprod{\nabla(\psi(0) - \psi_0),\nabla \varphi(0)} = 0.
\end{equation*}
As $\varphi(0)$ can be arbitrary, we have concluded with the proof of $\psi(0)=\psi_0$.
From the estimates \eqref{eq: continuous estimate 1},\eqref{eq:galerkin_approx_bound_dt}, we know that 
$\psi$ satisfies the bounds 
$$
\|\partial_t \Delta \psi\|_{L^2(I;H^{-2}(\Omega))} 
+ \|\psi\|_{L^2(I;H^2_0(\Omega))}
\le C\left(\|f\|_{L^2(I;H^{-2}(\Om))}+\|\na \psi_0\|_{L^2(\Om)}\right).
$$
The continuity in time is then a direct consequence of \Cref{lemm:H1_continuity}.
Choosing $\psi_0 =0$ and $f=0$ directly implies uniqueness.
\end{proof}

  \begin{remark}
    The stability estimate of \Cref{thm:existence_uniqueness_standard_regularity} bounds the term
    $\|\pa_t \Delta \psi\|_{L^2(I;H^{-2}(\Om))}$ by norms of the data. One could expect that this results in $\pa_t \psi \in L^2(I \times \Om)$, which is, however, not the case as we show in \Cref{sec:appendix} by means of a counterexample. In \cite[Theorem 2.3]{KimQuasiGeostrophic2025} a regularity result $\pa_t \psi \in L^2(I;H^{-1}(\Omega))$ is claimed for a version of \eqref{eq:transient_biharmonic_weak} with some non-linear terms and the same assumptions on the data as in Theorem \ref{thm:existence_uniqueness_standard_regularity}. In \Cref{sec:appendix} we show that even this result is in general not true for \eqref{eq:transient_biharmonic_weak}. 
  \end{remark}

Under some additional conditions on the data, we can show higher-regularity result.
\begin{theorem}\label{prop:weak_stability}
    Let $f \in L^2(I;H^{-1}(\Omega))$ and $\psi_0 \in H^2_0(\Omega)$.
    Then the solution to \eqref{eq:transient_biharmonic_weak} additionally has the following regularities:
    $\partial_t \psi\in L^2(I;H^1_0(\Om))$, $\mathcal A \psi \in L^2(I;H^{-1}(\Om))$,
    and $\psi \in C(\bar I;H^2_0(\Omega))$. The weak form can be written as 
\begin{equation}
    \begin{aligned}
	\IOprod{\partial_t \na \psi, \na \varphi}
        +\IOprod{\Delta \psi, \Delta \varphi}
        &= \langle f, \varphi \rangle \quad &\text{for all }\; \varphi\in \YY, \\
	\psi(0) &= \psi_0, &
    \end{aligned}
\end{equation}
        and there holds
	\[
	\|\na\partial_t \psi\|_{L^2(I;L^{2}(\Om))} 
  + \|\mathcal A \psi\|_{L^2(I;H^{-1}(\Om))} 
  + \|\psi\|_{C(\bar I;H^2_0(\Om))}
  \le C \left( \|f\|_{L^2(I;H^{-1}(\Omega))} + \|\psi_0\|_{H^2_0(\Omega)}\right).
	\]
	If $\Omega$ is in addition convex, then $\psi \in L^2(I;H^3(\Om)\cap H^2_0(\Om))$ and there holds
 	\[
 \|\psi\|_{L^2(I;H^3(\Om))} 
 \le C \left( \|f\|_{L^2(I;H^{-1}(\Omega))} + \|\psi_0\|_{H^2_0(\Omega)}\right).
 \]
  \end{theorem}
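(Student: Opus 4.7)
The natural strategy is to return to the Galerkin scheme \eqref{eq:galerkin_approximation} built from the buckling eigenfunctions $\{w_k\}$ from \Cref{lemma:eigen} and repeat the energy estimate procedure of \Cref{thm:existence_uniqueness_standard_regularity}, but this time testing with the higher-regularity test function $v_n = \partial_t \psi_n(t)$, which lies in $V_n$ because $V_n$ is finite dimensional. This gives the pointwise-in-time identity
\begin{equation*}
\|\nabla \partial_t \psi_n(t)\|_{L^2(\Om)}^2 + \tfrac12\tfrac{d}{dt}\|\Delta \psi_n(t)\|_{L^2(\Om)}^2 = \langle f(t),\partial_t \psi_n(t)\rangle_{H^{-1},H^1_0},
\end{equation*}
where I can now use the $H^{-1}/H^1_0$ duality pairing because of the stronger assumption $f \in L^2(I;H^{-1}(\Om))$. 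After Young's inequality absorbs $\|\nabla \partial_t \psi_n\|_{L^2(\Om)}^2$ on the left, integration in time over $(0,t)$ yields
\begin{equation*}
\|\nabla \partial_t \psi_n\|_{L^2(I;L^2(\Om))}^2 + \|\Delta \psi_n\|_{L^\infty(I;L^2(\Om))}^2 \le C\left(\|f\|_{L^2(I;H^{-1}(\Om))}^2 + \|\Delta \psi_n(0)\|_{L^2(\Om)}^2\right).
\end{equation*}
For the initial term I will use $\psi_n(0)=P_n\psi_0$ and the $H^2_0$-stability of $P_n$ already shown in the existence proof (via $\|\Delta P_n v\|_{L^2}^2 = \sum_{k=1}^n (\nabla v,\nabla w_k)^2\|\Delta w_k\|_{L^2}^2 \le \|\Delta v\|_{L^2}^2$), combined with \Cref{lemm:h2_norm_equivalence}.

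From these bounds I can extract a subsequence converging weakly to $\psi$ in $L^2(I;H^2_0(\Om))$ with $\partial_t \psi \in L^2(I;H^1_0(\Om))$, and uniqueness from \Cref{thm:existence_uniqueness_standard_regularity} identifies the limit as the weak solution. Rewriting the weak form using integration by parts in space,
\begin{equation*}
\IOprod{\partial_t \nabla \psi,\nabla \varphi} + \IOprod{\Delta \psi,\Delta \varphi} = \langle f,\varphi\rangle \quad \textforall \varphi \in \YY,
\end{equation*}
is justified since $\partial_t \psi \in L^2(I;H^1_0(\Om))$, so $-\langle \partial_t \Delta \psi,\varphi\rangle = (\partial_t \nabla \psi,\nabla \varphi)$. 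Consequently, rearranging and using the definition \eqref{eq:def_biharmonic_operator},
\begin{equation*}
|\langle \mathcal A \psi(t),\varphi\rangle| = |\langle f(t),\varphi\rangle - \Oprod{\partial_t \nabla \psi(t),\nabla \varphi}| \le \left(\|f(t)\|_{H^{-1}(\Om)} + \|\partial_t \psi(t)\|_{H^1_0(\Om)}\right)\|\varphi\|_{H^1_0(\Om)}
\end{equation*}
for all $\varphi \in H^2_0(\Om)$; by density this extends to $\varphi \in H^1_0(\Om)$, yielding $\mathcal A \psi \in L^2(I;H^{-1}(\Om))$ with the desired bound. The continuity $\psi \in C(\bar I;H^2_0(\Om))$ then follows immediately from \Cref{lemm:H20_continuity}, since all three hypotheses of that lemma have now been established.

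For the convex case, I invoke \Cref{prop: elliptic} (the $H^3$ regularity assertion): applied pointwise in time to $\psi(t)$, which solves $\mathcal A \psi(t) = \mathcal A \psi(t) \in H^{-1}(\Om)$ as a stationary biharmonic problem, this gives $\|\psi(t)\|_{H^3(\Om)} \le C\|\mathcal A \psi(t)\|_{H^{-1}(\Om)}$, and squaring and integrating over $I$ yields the claimed bound. The step I expect to require the most care is verifying the $H^2_0$-stability of the Galerkin initial condition $\psi_n(0)=P_n\psi_0$ relative to $\|\psi_0\|_{H^2_0(\Om)}$, since $P_n$ was originally defined as the $H^1_0$-orthogonal projection and one must exploit the double orthogonality of the buckling eigenbasis from \Cref{lemma:eigen} — but this identity is essentially already present in the existence proof and transfers directly.
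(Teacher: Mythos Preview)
Your proposal is correct and follows essentially the same approach as the paper: test the Galerkin system with $\partial_t \psi_n$, use the $H^2_0$-stability of $P_n$ via the double orthogonality of the buckling eigenbasis, pass to the limit, and invoke \Cref{lemm:H20_continuity} and \Cref{prop: elliptic}. The only minor variation is that you deduce $\mathcal A\psi \in L^2(I;H^{-1}(\Om))$ directly from the limiting weak form and density of $H^2_0(\Om)$ in $H^1_0(\Om)$, whereas the paper establishes the analogous bound for $\mathcal A\psi_n$ at the Galerkin level (using the identity $(\Delta \psi_n,\Delta v)=(\Delta \psi_n,\Delta P_n v)$) and then passes to the limit; both routes are equally valid.
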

\begin{proof}
We use the Galerkin approximation from the proof of \Cref{thm:existence_uniqueness_standard_regularity} and
test \eqref{eq:galerkin_approximation} with $v_n = \partial_t \psi_n(t) \in V_n$. This results for 
a.e. $t \in I$ in
$$
\|\na \partial_t \psi_n(t)\|^2_{L^2(\Om)}+\frac{1}{2}\frac{d}{dt}\|\Delta \psi_n(t)\|^2_{L^2(\Om)} = \langle f(t),\partial_t \psi_n(t)\rangle_{H^{-1}(\Omega),H^1_0(\Omega)}\le \|f(t)\|_{H^{-1}(\Om)}\|\nabla \partial_t \psi_n(t)\|_{L^2(\Om)}.
$$
Integrating over $I$ and using the Young's inequality, we have
\begin{equation}\label{eq:nabla_dt_psi_bound}
\|\na \partial_t \psi_n\|^2_{I\times\Om}
+ \frac{1}{2} \|\Delta \psi_n(T)\|^2_{L^2(\Om)}
\le \frac{1}{2} \|\Delta \psi_n(0)\|^2_{L^2(\Om)}
+\frac{1}{2}\|\nabla \partial_t \psi_n\|^2_{I\times\Om}
+\frac{1}{2}\|f\|^2_{L^2(I; H^{-1}(\Om))}.
\end{equation}
    As $\psi_0 \in H^2_0(\Om)$, due to \eqref{eq:galerkin_approximation_initial_condition} 
    and \eqref{eq:discrete_tested_with_continuous}, there holds 
    \begin{equation*}
        \|\Delta \psi_n(0)\|^2_{L^2(\Om)}
        = \|\Delta P_n \psi_0\|^2_{L^2(\Om)}
        \le \|\Delta \psi_0\|^2_{L^2(\Om)}
        = \|\psi_0\|^2_{H^2_0(\Om)}.
    \end{equation*}
    Hence, after canceling terms in \eqref{eq:nabla_dt_psi_bound}, multiplying by 2 and taking the root,
    we obtain
\begin{equation}\label{eq: continuous estimate 2}
\|\na \partial_t \psi_n\|_{I\times\Om}\le \|\psi_0\|_{H^2_0(\Om)}+\|f\|_{L^2(I; H^{-1}(\Om))}.
\end{equation}
Moreover, using integration by parts, \eqref{eq:discrete_tested_with_continuous} and 
the definition of the Galerkin approximation \eqref{eq:galerkin_approximation},
there holds for any $v \in H^2_0(\Om)$ and a.e. $t \in I$
\begin{align*}
  \Oprod{\mathcal A \psi_n(t) , v} 
  &= \Oprod{\Delta \psi_n(t) , \Delta v} 
  = \Oprod{\Delta \psi_n(t), \Delta P_n v}
  = -\Oprod{\partial_t \na \psi_n(t), \na P_n v}+\langle f(t), P_n v \rangle.
\end{align*}
By density of $H^2_0(\Om)$ in $H^1_0(\Om)$, for all $v \in H^1_0(\Om)$ we thus have
\begin{equation*}
  \Oprod{\mathcal A \psi_n(t) , v} 
  = -\Oprod{\partial_t \na \psi_n(t), \na P_n v}+\langle f(t), P_n v \rangle.
\end{equation*}
Hence, using the stability of $P_n$, \eqref{eq:stability_Pn}, we obtain the bound
\begin{align*}
  \Oprod{\mathcal A \psi_n(t) , v} 
   &\le C \left(\norm{\pa_t \na \psi_n(t)}_{L^2(\Om)} + \norm{f(t)}_{H^{-1}(\Om)}\right)\norm{P_n v}_{H^1_0(\Om)}\\
   &\le C \left(\norm{\pa_t \na \psi_n(t)}_{L^2(\Om)} + \norm{f(t)}_{H^{-1}(\Om)}\right)\norm{v}_{H^1_0(\Om)},
\end{align*}
which leads to the estimate
$$
\norm{\mathcal A \psi_n(t)}_{H^{-1}(\Om)} 
\le C \left(\norm{\pa_t \na \psi_n(t)}_{L^2(\Om)} + \norm{f(t)}_{H^{-1}(\Om)}\right),
$$
and thus after squaring and integrating in time, and using \eqref{eq: continuous estimate 2}
\begin{equation}\label{eq:D2H-1estimate}
\norm{\mathcal A \psi_n}_{L^2(I;H^{-1}(\Om))} \le C\left(\|\psi_0\|_{H^2(\Om)}+\|f\|_{L^2(I; H^{-1}(\Om))}\right).
\end{equation}
 As in the proof of \Cref{thm:existence_uniqueness_standard_regularity}, we obtain 
    $\mathcal A \psi_n \rightharpoonup \mathcal A \psi$ in $L^2(I;H^{-1}(\Om))$ and
    $\na \pa_t \psi_n \rightharpoonup \na \pa_t \psi$ in $L^2(I;L^2(\Om)^2)$
where $\psi$ is the solution to \eqref{eq:transient_biharmonic_weak}. As  balls in the norms of $L^2(I;H^{-1}(\Om))$ and $L^2(I;L^2(\Om)^2)$ and  are weakly closed, the bounds \eqref{eq: continuous estimate 2} and 
\eqref{eq:D2H-1estimate} derived for $\psi_n$ also hold for $\psi$.
Using $\pa_t \psi \in L^2(I;H^1_0(\Om))$ and $\mathcal A \psi \in L^2(I;H^{-1}(\Om))$ together with
\Cref{lemm:H20_continuity} yields $\psi \in C(\bar I;H^2_0(\Om))$.
If $\Omega$ is convex, due to $\mathcal A \psi \in L^2(I;H^{-1}(\Om))$, we may apply the
$H^3$ regularity from \Cref{prop: elliptic} for a.e. $t \in I$.
This yields $\psi \in L^2(I;H^3(\Om)\cap H^2_0(\Om))$ together with the corresponding estimate.
\end{proof}

\begin{remark}
    In \Cref{thm:existence_uniqueness_standard_regularity,prop:weak_stability}, we have stated the stability
    results
    in terms of the data for the stream-function formulation. If the data $f,\psi_0$ are not directly given,
    but rather have to be deduced from the data for the corresponding Stokes problem, i.e., it holds
    $f = -\curl(\g)$ and $\psi_0$ solves \eqref{eq:initial_psi0}, we have the following 
    relations. If $\g \in L^2(I;\V^*)$ and $\uu_0 \in \H$, then 
    \begin{equation*}
        \|\partial_t \Delta \psi\|_{L^2(I;H^{-2}(\Omega))} 
        + \|\psi\|_{L^2(I;H^2_0(\Omega))}
        +\|\psi\|_{C(\bar{I}; H^1(\Om))}
        \le C\left(\|\g\|_{L^2(I;\V^*)}+\|\uu_0\|_{L^2(\Om)}\right).
    \end{equation*}
    If moreover $\g \in L^2(I;\H)$ and $\uu_0 \in \V$, then 
    \[
   	\|\na\partial_t \psi\|_{L^2(I;L^{2}(\Om))} 
     + \|\mathcal A \psi\|_{L^2(I;H^{-1}(\Om))} 
     + \|\psi\|_{C(\bar I;H^2_0(\Om))}
     \le C \left( \|\g\|_{L^2(I;\H)} + \|\uu_0\|_{\V}\right).
   	\]
    These are direct consequences of \Cref{lemm:initial_data_stability} and \eqref{eq:curl_isom}.
\end{remark}

\subsection{Interpolated Regularity}
The formulations of  \Cref{thm:existence_uniqueness_standard_regularity,prop:weak_stability}
were chosen to explicitly highlight the best regularity of $\psi$ that we require,
but obscure the derivation of a corresponding intermediate result.
Before we state the next corollary, let us note, that the stability estimate of
\Cref{thm:existence_uniqueness_standard_regularity} can equivalently be written as:
If $f \in L^2(I;H^{-2}(\Om))$ and $\psi_0 \in H^1_0(\Om)$, then
\begin{equation}\label{eq:parabolic_regularity_low}
  \|\partial_t \Delta \psi\|_{L^2(I;H^{-2}(\Om))} + \|\mathcal A \psi \|_{L^2(I;H^{-2}(\Om))}
  \le C \left( \|\psi_0\|_{H^1_0(\Om)} + \|f\|_{L^2(I;H^{-2}(\Om))}\right).
\end{equation}
Accordingly, \Cref{prop:weak_stability} states that, if $f \in L^2(I;H^{-1}(\Om))$ and
$\psi_0 \in H^2_0(\Om)$, then
\begin{equation}\label{eq:parabolic_regularity_high}
  \|\partial_t \Delta \psi\|_{L^2(I;H^{-1}(\Om))} + \|\mathcal A \psi \|_{L^2(I;H^{-1}(\Om))}
  \le C \left( \|\psi_0\|_{H^2_0(\Om)} + \|f\|_{L^2(I;H^{-1}(\Om))}\right),
\end{equation}
where in both statements we use the notation $\mathcal A$ for the biharmonic operator as introduced in
\eqref{eq:def_biharmonic_operator}. We then immediately obtain:
\begin{corollary}\label{corr:parabolic_regularity_interpolated}
Let $\theta \in [0,1]$, $f \in L^2(I;H^{-2+\theta}(\Omega))$ and
$\psi_0 \in H^{1+\theta}_0(\Om)$.
Then the solution to \eqref{eq:transient_biharmonic_weak} satisfies the estimate
\begin{equation}\label{eq:parabolic_regularity_interpolated}
  \|\partial_t \Delta \psi\|_{L^2(I;H^{-2+\theta}(\Om))} 
  + \|\mathcal A \psi \|_{L^2(I;H^{-2+\theta}(\Om))}
  \le C \left( \|\psi_0\|_{H^{1+\theta}_0(\Om)} + \|f\|_{L^2(I;H^{-2+\theta}(\Om))}\right).
\end{equation}
  \end{corollary}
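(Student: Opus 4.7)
The proof plan is a direct operator interpolation argument, taking the two reformulated endpoint estimates \eqref{eq:parabolic_regularity_low} and \eqref{eq:parabolic_regularity_high} as the endpoints. By linearity of \eqref{eq:transient_biharmonic_weak} and uniqueness from \Cref{thm:existence_uniqueness_standard_regularity}, the map
\begin{equation*}
  T\colon (f,\psi_0) \mapsto (\Delta \pa_t \psi,\, \mathcal A \psi)
\end{equation*}
is a well-defined linear operator on the data. The two endpoint stability estimates state precisely that $T$ is bounded as
\begin{equation*}
  T\colon L^2(I;H^{-2}(\Om)) \times H^1_0(\Om) \to L^2(I;H^{-2}(\Om))^2
\end{equation*}
and
\begin{equation*}
  T\colon L^2(I;H^{-1}(\Om)) \times H^2_0(\Om) \to L^2(I;H^{-1}(\Om))^2.
\end{equation*}

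Next I would apply the complex interpolation functor $[\cdot,\cdot]_\theta$. This immediately yields boundedness of $T$ between the corresponding interpolation spaces. To conclude, I would invoke the standard Hilbert-scale identifications
\begin{equation*}
  [H^{-2}(\Om), H^{-1}(\Om)]_\theta = H^{-2+\theta}(\Om), \qquad
  [H^1_0(\Om), H^2_0(\Om)]_\theta = H^{1+\theta}_0(\Om),
\end{equation*}
together with the Bochner-space identity $[L^2(I;A), L^2(I;B)]_\theta = L^2(I;[A,B]_\theta)$ and the fact that interpolation commutes with finite Cartesian products. Assembling these four identifications on both the domain and codomain sides produces exactly the estimate \eqref{eq:parabolic_regularity_interpolated}.

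The main technical point to be careful about is the identification $[H^1_0(\Om), H^2_0(\Om)]_\theta = H^{1+\theta}_0(\Om)$, since at the critical value $\theta=1/2$ the correct intermediate space in the Lions--Magenes sense is $H^{3/2}_{00}(\Om)$ rather than $H^{3/2}_0(\Om)$. For every other $\theta \in [0,1]$ the identification is standard, and since the applications of this corollary in the rest of the paper only use the endpoint cases (in particular $\theta = 1$ for the target rate in \eqref{eq:psi:rates}) and values of $\theta$ away from $1/2$, this exceptional case poses no genuine obstacle to the analysis that follows.
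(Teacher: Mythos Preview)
Your proposal is correct and follows exactly the approach of the paper, which proves the corollary in a single sentence by invoking complex interpolation between the two endpoint estimates \eqref{eq:parabolic_regularity_low} and \eqref{eq:parabolic_regularity_high}. Your write-up is in fact more detailed than the paper's own proof, and your remark about the exceptional case $\theta = 1/2$ is a valid caveat that the paper does not mention.
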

  \begin{proof}
    The result is a direct consequence of \eqref{eq:parabolic_regularity_low} and
    \eqref{eq:parabolic_regularity_high} together with the complex interpolation.
  \end{proof}

\section{Fully discrete approximation}\label{sec: discretization}
In this section we consider the  discrete version of the operators presented in the previous section and introduce fully discrete Galerkin solution.  

\subsection{Spatial discretization}\label{sec: space discretization}

Let $\{\Th\}$\label{glos:triangulation} be a 
family of triangulations of $\bar \Omega$, consisting of closed simplices, where we denote by $h$ the maximum mesh-size. Let $V_h\subset H^1_0(\Omega)$ be the finite dimensional space.  
We denote by $\Pi_h\colon H^1_0(\Om) \to V_h$ the $H^1_0$-projection, defined by 
\begin{equation}\label{eq:definition_Pih}
  \Oprod{\nabla \Pi_h w,\nabla v_h} = \Oprod{\nabla w,\nabla v_h} \qquad \textforall v_h \in V_h.
\end{equation}
By definition, $\Pi_h$ is stable in $H^1_0(\Om)$, i.e., there holds 
    \begin{equation}\label{eq:stability_Pih}
        \|\Pi_h w\|_{H^1_0(\Om)} \le \|w\|_{H^1_0(\Om)}.
    \end{equation}
To allow for various space discretizations, we introduce a symmetric discrete elliptic bilinear form 
$$
a_h(\cdot,\cdot):V_h \times V_h \to \mathbb{R},
$$
where at this point we do not fix a special choice of discretization in space.
As $a_h(\cdot,\cdot)$ was defined to be symmetric and elliptic, i.e., there holds
$$
a_h(\varphi_h, \varphi_h)\geq 0\quad \textforall \varphi_h\in V_h\quad\text{and}\quad a_h(\varphi_h, \varphi_h)=0\quad \Rightarrow \quad \varphi_h=0,
$$
the bilinear form $a_h(\cdot,\cdot)$  defines a norm on $V_h$ 
\begin{equation}\label{eq:triple_norm_ah}
\vertiii{w_h}_h := \sqrt{a_h(w_h,w_h)}.
\end{equation}
In addition we also define an operator $A_h\colon V_h\to V_h$, by
\begin{equation}\label{eq:Ah}
a_h(\phi_h,\varphi_h)=(\na A_h \phi_h,\na\varphi_h)_\Om\quad \textforall\phi_h,\varphi_h\in V_h.
\end{equation}
Notice, since the bilinear form $a_h(\cdot,\cdot)$ is symmetric, the operator $A_h$ is self-adjoint 
w.r.t the $H^1_0(\Om)$ inner product, since
$$
(\na A_h \phi_h,\na\varphi_h)_\Om=a_h(\phi_h,\varphi_h)=a_h(\varphi_h,\phi_h)=(\na A_h \varphi_h,\na\phi_h)_\Om=(\na\phi_h,\na A_h \varphi_h)_\Om.
$$
We can establish the following stability result
\begin{lemma}\label{lemma: elliptic na Ah}
Let $g\in H^1_0(\Om)$ and $w_h\in V_h$ be the finite element solution
$$
a_h(w_h,\varphi_h)=(\na g,\na\varphi_h)_\Om\quad \textforall\varphi_h\in V_h.
$$  
Then,
$$
\|\na A_h w_h\|_{L^2(\Om)}\le \|\na g\|_{L^2(\Om)}.
$$
\end{lemma}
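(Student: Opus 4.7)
The plan is a very short computation exploiting the defining identities for $w_h$ and for $A_h$. By hypothesis, $w_h \in V_h$ satisfies
\[
a_h(w_h,\varphi_h)=(\na g,\na\varphi_h)_\Om \quad \textforall \varphi_h \in V_h,
\]
while by the definition of $A_h$ in \eqref{eq:Ah},
\[
a_h(w_h,\varphi_h)=(\na A_h w_h,\na\varphi_h)_\Om \quad \textforall \varphi_h \in V_h.
\]
Combining these gives $(\na A_h w_h, \na \varphi_h)_\Om = (\na g,\na\varphi_h)_\Om$ for every $\varphi_h \in V_h$, which identifies $A_h w_h$ with the $H^1_0$-projection $\Pi_h g$ of $g$ onto $V_h$ defined in \eqref{eq:definition_Pih}.

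To conclude, I would either invoke the stability bound \eqref{eq:stability_Pih} directly, or, more self-contained, test with the admissible choice $\varphi_h = A_h w_h \in V_h$ to obtain
\[
\|\na A_h w_h\|_{L^2(\Om)}^2 = (\na g, \na A_h w_h)_\Om \le \|\na g\|_{L^2(\Om)} \|\na A_h w_h\|_{L^2(\Om)}
\]
via the Cauchy-Schwarz inequality, and then divide by $\|\na A_h w_h\|_{L^2(\Om)}$ (the case where it vanishes being trivial).

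There is no genuine obstacle here; the only thing worth emphasizing is that the symmetry of $a_h(\cdot,\cdot)$ is not needed for the bound itself, and that the argument uses only the two algebraic identities above together with the fact that $A_h w_h$ lies in $V_h$, which is built into the definition of the operator $A_h \colon V_h \to V_h$.
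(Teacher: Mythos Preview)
Your proposal is correct and, in its self-contained version (testing with $\varphi_h = A_h w_h$ and applying Cauchy--Schwarz), is exactly the paper's proof. Your additional observation that $A_h w_h = \Pi_h g$ and that symmetry of $a_h$ is not used is accurate but not needed for the argument.
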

\begin{proof}
By the definition of $A_h$ and the equation for $w_h$,
$$
\|\na A_h w_h\|_{\Om}^2=(\na A_h w_h, \na A_h w_h)_\Omega=a_h( w_h, A_h w_h)=(\na g,\na A_h w_h)_\Om\le \|\na g\|_{\Om}\|\na A_h w_h\|_{\Om}.
$$
Canceling, we obtain the result.
\end{proof}

We wish our analysis to be applicable to a broad range of discretizations, for instance to the 
  $C^0$-interior penalty method (cf. \cite{BrennerS_SungLY_2005}), in which case $a_h(\psi,\cdot)$ is not well
  defined for general $\psi \in H^2_0(\Om)$. 
  Vice versa, not for all data $f \in H^{-2}(\Om)$ the right-hand side can be tested with $\phi_h \in V_h$.
  Hence, throughout the paper, we work under the following assumptions, the first ensuring well-posedness 
  of the discrete problems, and the second guaranteeing an elliptic Galerkin orthogonality.
    \begin{assumption}\label{ass:Vh_contained}
        There exists $\theta \in (0,1]$, such that
            $V_h \subset H^{1+\theta}_0(\Om)$. 
    \end{assumption}
    \begin{assumption}\label{ass:discrete_bilin_form}
      Let $\theta$ and $V_h$ be as in \Cref{ass:Vh_contained}.
      The bilinear form $a_h(\cdot,\cdot)$ can be extended to 
      $$ a_h(\cdot,\cdot):(X+V_h) \times (X+V_h) \to \mathbb{R}, $$
      where $X:= \mathcal A^{-1}(H^{-1-\theta}(\Om))$. Furthermore, for all $\psi \in X, \varphi_h \in V_h$,
      there holds
      \begin{equation*}
          \Opair{\mathcal A \psi,\varphi_h} = a_h(\psi,\varphi_h),
      \end{equation*}
      where $\mathcal A$ is the biharmonic operator introduced 
      in \eqref{eq:def_biharmonic_operator}.
    \end{assumption}
    \begin{remark}\label{remark: assumptions}
        Although technical in nature, these assumptions are not overly restrictive, mainly posing a regularity
        assumption on the solution $\psi$ of \eqref{eq:Fourth}.
        In \cite[Eq. 1.4]{BrennerS_SungLY_2005} the authors show that on any polygonal domain, 
        for the choice $\theta = \frac12 - \varepsilon$, where $\varepsilon > 0$ can be taken 
        arbitrarily small, there holds $X = H^2_0(\Om) \cap H^{\frac{5}{2} + \varepsilon}(\Om)$.
        For this choice of $\theta$, taking $V_h$ as a standard $C^0$-finite element space trivially 
        satisfies \Cref{ass:Vh_contained}. In \cite[Lemma 5]{BrennerS_SungLY_2005} it is then shown, that 
        if $a_h(\cdot,\cdot)$ is chosen as the interior penalty bilinear form then \Cref{ass:discrete_bilin_form}
        holds.
        On the other hand, for $H^2_0(\Om) \supset V_h$ the $C^1$-finite element space, together with
        $a_h(\cdot,\cdot) = \Oprod{\Delta \cdot,\Delta \cdot}$, 
        Assumptions \ref{ass:Vh_contained} and \ref{ass:discrete_bilin_form} 
        are satisfied for $\theta = 1$, $X=H^2_0(\Om)$.
    \end{remark}
 Under \Cref{ass:discrete_bilin_form}, we have the following form of a Galerkin orthogonality:
     Let $\psi \in H^2_0(\Om)$ be the solution of \eqref{eq:Fourth} for given data $f \in H^{-1-\theta}(\Om)$.
     Then $\psi \in X$, and it holds
    \begin{equation}\label{eq:elliptic_galerkin_orth}
         a_h(\psi-\psi_h,\varphi_h) = 0 \qquad \textforall \varphi_h \in V_h,
     \end{equation}
     where $\psi_h \in V_h$ is the solution to the discrete weak formulation of \eqref{eq:Fourth} satisfying
     \begin{equation*}
         a_h(\psi_h,\varphi_h) = \Opair{f,\varphi_h} \qquad \textforall \varphi_h \in V_h.
     \end{equation*}
 Given such a bilinear form, we define the (biharmonic) Ritz projection
 $R_h\colon{X}\to V_h, w \mapsto R_h w$, with $X$ from Assumption \ref{ass:discrete_bilin_form}, by
 \begin{equation}\label{eq:def_ritz_projection}
    a_h(w-R_hw,\chi)=0\quad \textforall\chi\in V_h.
\end{equation}

  \subsection{Full space-time discretization}
\label{sec: full discretization}  
To discretize in time, 
we use the discontinuous Galerkin method.
For that, we partition $I = (0,T]$ into subintervals $I_m = (t_{m-1},t_m]$ of length $k_m = t_m - t_{m-1}$, where $0= t_0<t_1<\dots <t_{M-1}<t_M = T$. The maximal and minimal time steps are denoted by $k=\max_m k_m$ and $k_{\min}=\min_mk_m$, respectively.  
We use the following standard notation for a function $u_k \in X_{k}^r(V)$, for a Banach space $V$, where
$$
X_{k}^r(V):=\set{u_k\in L^2(I;V)\ |\ u_k\mid_{I_m}\in \mathbb{P}_r(I_m;V),\ m=1,2,\dots,M },
$$
\begin{equation}
u_{k,m}^+ = \lim_{\varepsilon \rightarrow 0^+}u_k(t_m+\varepsilon), \quad u_{k,m}^- = \lim_{\varepsilon \rightarrow 0^+}u_k(t_m-\varepsilon),\quad [u_k]_m = u_{k,m}^+-u_{k,m}^-.
\end{equation}
In several places of the argument we will use the following identity for any $w(t)$ 
with well defined 
one-sided limits at $t=t_{m-1}$, there holds
\begin{equation}\label{eq:jump_identity}
([w]_{m-1}, w_{m-1}^+)_{\Omega}= \frac{1}{2}\|w_{m-1}^+\|^2_{\Om}+\frac{1}{2}\| [w]_{m-1}\|^2_{\Om}-\frac{1}{2}\|w_{m-1}^-\|^2_{\Om}.
\end{equation}
We define a time projection $\pi_k\colon C(\bar{I})\to X_k^r(\R)$ which on each time interval $I_m$ is defined by
\begin{equation}\label{eq: projection pik}
\begin{aligned}
(\pi_k v-v,\phi)_{I_m}&=0\quad \textforall \phi\in \mathbb{P}_{r-1}(I_m),\quad r>0,\\
\pi_k v(t_m^-)&=v(t_m^-).
\end{aligned}
\end{equation}
In the case of $dG(0)$ method, the projection is defined by the second condition only. 
With these considerations, we can now define the fully discrete approximation to the problem
  \eqref{eq:transient_biharmonic_weak}, by introducing the fully discrete bilinear form
  $B_h\colon X_{k}^r(V_h)\times X_{k}^r(V_h)\to \mathbb{R}$
\begin{equation}\label{eq: fully discrete bilinear form}
  \begin{aligned}
    {B}_h(u_{kh},v_{kh}) = & \sum_{m=1}^M (\partial_t \nabla u_{kh}, \nabla v_{kh} )_{I_m\times \Omega} 
   + \int_0^Ta_h( u_{kh}, v_{kh}) dt\\
   & + \sum_{m=2}^M ([\nabla u_{kh}]_{m-1},\na v_{kh,m-1}^+)_{\Omega} + (\na u_{kh,0}^+,\na v_{kh,0}^+)_{\Omega}.
 \end{aligned}
\end{equation}
Then the fully discrete problem reads: Find  $\psi_{kh}\in X_{k}^r(V_h)$, 
such that for given $f\in L^2(I;H^{-1-\theta}(\Om))$, $\psi_0 \in H^{2-\theta}_0(\Om)$, it holds
\begin{equation}\label{eq:fully_discrete_instationary}
    {B}_h(\psi_{kh},\varphi_{kh})=\IOpair{f, \varphi_{kh}}+\Oprod{\na \psi_0,\na \varphi_{kh,0}^+},
    \quad \text{for all}\ \varphi_{kh}\in X_{k}^r(V_h),
\end{equation}
where $\theta \in (0,1]$ satisfies \Cref{ass:Vh_contained}.
    Note that due to \Cref{ass:Vh_contained},
    the right-hand side $\IOpair{f,\varphi_{kh}}$ is well defined. Moreover, we have the following parabolic Galerkin orthogonality
between the continuous and fully discrete solution. 
\begin{lemma}
  Let $\theta$ satisfy the Assumptions \ref{ass:Vh_contained} and \ref{ass:discrete_bilin_form}
  and let $f \in L^2(I;H^{-1-\theta}(\Om))$,
    $\psi_0 \in H^{2-\theta}_0(\Om)$.
    Further let $\psi$, $\psi_{kh}$ be the continuous and fully discrete solutions to
    \eqref{eq:transient_biharmonic_weak} and \eqref{eq:fully_discrete_instationary} respectively.
    Then for any $\varphi_{kh} \in X^r_k(V_h)$ there holds 
    \begin{equation}\label{eq: Galerkin orthogonality fully discrete}
        B_h(\psi-\psi_{kh},\varphi_{kh}) = 0.
    \end{equation}
\end{lemma}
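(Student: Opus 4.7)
The strategy is to show that the continuous solution $\psi$ itself satisfies the discrete equation, i.e., $B_h(\psi,\varphi_{kh}) = \IOpair{f,\varphi_{kh}} + \Oprod{\nabla\psi_0,\nabla\varphi_{kh,0}^+}$ for every $\varphi_{kh}\in X^r_k(V_h)$; subtracting \eqref{eq:fully_discrete_instationary} then yields \eqref{eq: Galerkin orthogonality fully discrete}.

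To evaluate $B_h(\psi,\varphi_{kh})$, I first apply Corollary~\ref{corr:parabolic_regularity_interpolated} with interpolation parameter $1-\theta$ (where $\theta$ is from Assumption~\ref{ass:Vh_contained}) to the data $f\in L^2(I;H^{-1-\theta}(\Omega))$ and $\psi_0 \in H^{2-\theta}_0(\Omega)$, obtaining $\partial_t\Delta\psi,\,\mathcal{A}\psi\in L^2(I;H^{-1-\theta}(\Omega))$ and, via Lemma~\ref{lemm:H1_continuity}, $\psi\in C(\bar I;H^1_0(\Omega))$ with $\psi(0)=\psi_0$. The continuity in time makes all jumps $[\nabla\psi]_{m-1}$ in $B_h$ vanish, so only the volume and initial terms remain. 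Since $\psi(t)\in X$ for a.e.\ $t\in I$ (with $X$ from Assumption~\ref{ass:discrete_bilin_form}), that assumption gives $a_h(\psi(t),\varphi_h) = \Opair{\mathcal{A}\psi(t),\varphi_h}$ for all $\varphi_h\in V_h$, hence after integrating, $\int_0^T a_h(\psi,\varphi_{kh})\,dt = \IOpair{\mathcal{A}\psi,\varphi_{kh}}$.

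The time-derivative term is interpreted by spatial duality as $\sum_{m}(\partial_t\nabla\psi,\nabla\varphi_{kh})_{I_m\times\Omega} = -\IOpair{\partial_t\Delta\psi,\varphi_{kh}}$, which is legitimate because $\partial_t\Delta\psi \in L^2(I;H^{-1-\theta}(\Omega))$ and by Assumption~\ref{ass:Vh_contained} $\varphi_{kh}\in L^2(I;H^{1+\theta}_0(\Omega))$. Since $H^2_0(\Omega)$ is dense in $H^{1+\theta}_0(\Omega)$, the continuous weak form \eqref{eq:transient_biharmonic_weak} extends by continuity to all test functions in $L^2(I;H^{1+\theta}_0(\Omega))$, giving in particular $-\IOpair{\partial_t\Delta\psi,\varphi_{kh}} + \IOpair{\mathcal{A}\psi,\varphi_{kh}} = \IOpair{f,\varphi_{kh}}$. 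Assembling the three pieces yields $B_h(\psi,\varphi_{kh}) = \IOpair{f,\varphi_{kh}} + \Oprod{\nabla\psi_0,\nabla\varphi_{kh,0}^+}$, completing the proof.

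The main obstacle is precisely reconciling the continuous weak form, whose natural test space is $L^2(I;H^2_0(\Omega))$, with the discrete space $X^r_k(V_h)$ which, in the $C^0$-interior penalty setting, is \emph{not} contained in $L^2(I;H^2_0(\Omega))$. Assumption~\ref{ass:discrete_bilin_form} is what gives meaning to $a_h(\psi,\cdot)$ on $V_h$, and the interpolated parabolic regularity of Corollary~\ref{corr:parabolic_regularity_interpolated} is what allows $\partial_t\Delta\psi$ to be paired with $V_h$-functions; both are essential for the interpretation of $B_h(\psi,\cdot)$.
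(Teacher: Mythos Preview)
Your proof is correct and follows essentially the same approach as the paper: invoke the interpolated parabolic regularity of Corollary~\ref{corr:parabolic_regularity_interpolated} to get $\partial_t\Delta\psi,\,\mathcal A\psi\in L^2(I;H^{-1-\theta}(\Omega))$, use time-continuity of $\nabla\psi$ to eliminate the jump terms, apply Assumption~\ref{ass:discrete_bilin_form} to rewrite $a_h(\psi,\cdot)$ as $\langle\mathcal A\psi,\cdot\rangle$, and extend the weak formulation to test functions in $L^2(I;H^{1+\theta}_0(\Omega))$ via density of $H^2_0(\Omega)$. The paper carries out the density step explicitly with an approximating sequence, whereas you phrase it as extension by continuity, but the substance is identical.
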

\begin{proof}
    Let $\varphi_{kh} \in X_k^r(V_h)$ be arbitrary.
    Due to the regularity of $f$ and $\psi_0$, as a consequence of the interpolated regularity result in
    \Cref{corr:parabolic_regularity_interpolated}, we have 
    \begin{equation}\label{eq: regularity psi}
        \partial_t \Delta \psi, \mathcal A \psi \in L^2(I;H^{-1-\theta}(\Om)).
    \end{equation}
    By \Cref{ass:discrete_bilin_form},  the regularity of $\mathcal A \psi$
    implies $\psi \in L^2(I;X)$, with $X$ from \Cref{ass:discrete_bilin_form}, and thus 
    in the definition of $B_h(\cdot,\cdot)$, the term $\int_I a_h(\psi,\varphi_{kh}) \, d t$ is well defined.
    Further, as due to \Cref{thm:existence_uniqueness_standard_regularity} it holds 
    $\nabla \psi \in C(\bar I;L^2(\Om)^2)$, all jump terms vanish and 
    $\psi_0^+ = \lim_{t\to 0 +} \psi(t) = \psi(0)$.
    We thus obtain
    \begin{equation*}
        {B}_h(\psi,\varphi_{kh}) = 
        \int_0^T \Oprod{\partial_t \nabla \psi, \nabla \varphi_{kh} }
        + a_h( \psi, \varphi_{kh}) dt
        + \Oprod{\na \psi(0),\na \varphi_{kh,0}^+}.
    \end{equation*}
    By \Cref{ass:discrete_bilin_form},
    due to $\psi \in L^2(I;X)$ we can replace the discrete elliptic bilinear form 
    $a_h(\psi,\varphi_{kh})$ by the continuous elliptic operator $\Opair{\mathcal A \psi,\varphi_{kh}}$.
    Together with the definition of the Laplace operator, this yields
    \begin{equation*}
        {B}_h(\psi,\varphi_{kh}) = 
        -\IOpair{\partial_t \Delta \psi, \varphi_{kh} }
        + \IOpair{\mathcal A\psi, \varphi_{kh}}
        + \Oprod{\na \psi(0),\na \varphi_{kh,0}^+}.
    \end{equation*}
    As by \Cref{ass:Vh_contained},
    $V_h \subset H^{1+\theta}_0(\Om)$, and $H^2_0(\Om)$ is dense in $H^{1+\theta}_0(\Om)$,
    we can approximate $\varphi_{kh}$ by a sequence $\{\varphi^{kh}_n\} \subset L^2(I;H^2_0(\Om))$ with
    $\varphi^{kh}_n \to \varphi_{kh}$ in $L^2(I;H^{1+\theta}_0(\Om))$.
    As due to \eqref{eq:transient_biharmonic_weak}, it holds 
    \begin{equation*}
        -\IOpair{\partial_t \Delta \psi, \varphi^{kh}_n} + \IOpair{\mathcal A\psi, \varphi^{kh}_n}
        = \IOpair{f,\varphi^{kh}_n},
    \end{equation*}
    letting $n \to \infty$, the regularities of $f$ given by assumption, as well as 
    $\partial_t \Delta \psi, \mathcal A\psi$ from \eqref{eq: regularity psi} yield
    \begin{equation*}
        -\IOpair{\partial_t \Delta \psi, \varphi_{kh}} + \IOpair{\mathcal A\psi, \varphi_{kh}}
        = \IOpair{f,\varphi_{kh}}.
    \end{equation*}
    Hence together with $\psi(0) = \psi_0$
    \begin{equation*}
        {B}_h(\psi,\varphi_{kh}) = 
        \IOpair{f, \varphi_{kh} } + \Oprod{\na \psi_0,\na \varphi_{kh,0}^+}
        = {B}_h(\psi_{kh},\varphi_{kh}),
    \end{equation*}
    where the last step is precisely the definition of $\psi_{kh}$. 
    As $\varphi_{kh}$ was arbitrary, this concludes the proof.
\end{proof}

For the fully discrete bilinear form, the following dual expression can be shown via integration by parts on each
$I_m$ and rearranging of terms:
\begin{equation}\label{eq:fully_discrete_bilinear_form_dual}
  \begin{aligned}
    {B}_h(u_{kh},v_{kh}) = & -\sum_{m=1}^M ( \nabla u_{kh}, \partial_t\nabla v_{kh} )_{I_m\times \Omega} 
        + \int_0^T a_h(u_{kh},v_{kh}) \ dt\\
    & - \sum_{m=1}^{M-1} \Oprod{\nabla u^-_{{kh},m},[\na v_{kh}]_{m}} 
        + (\na u_{{kh},M}^-,\na v_{{kh},M}^-)_{\Omega}.
  \end{aligned}
\end{equation}

\section{Fully discrete error analysis}\label{sec:error_analysis}
First, we provide  the identity analogous to \eqref{eq:jump_identity} for the symmetric form
$a_h(\cdot, \cdot)$, namely
\begin{equation}\label{eq:jump_identity ah}
a_h([w]_{m-1}, w_{m-1}^+)=\frac{1}{2}a_h(w^+_{m-1}, w_{m-1}^+)+\frac{1}{2}a_h([w]_{m-1}, [w]_{m-1})-\frac{1}{2}a_h(w_{m-1}^-, w_{m-1}^-),
\end{equation}
for any $w(t)\in V_h$. The identity follows by adding and subtracting $\frac{1}{2}a_h([w]_{m-1}, [w]_{m-1})$
followed by direct calculations.
We moreover introduce the notation
\begin{equation}\label{eq:jump_at_0_convention}
    \psi_{kh,0}^- := \Pi_h \psi_0, \text{ and correspondingly } [\psi_{kh}]_0 = \psi_{kh,0}^+ - \Pi_h \psi_0,
\end{equation}
where $\Pi_h$ is the $H^1_0$-projection from \eqref{eq:definition_Pih}.
 
Before we show error estimates for the discrete problem, we shall first discuss stability estimates.
  Analogously to the continuous results \Cref{thm:existence_uniqueness_standard_regularity,prop:weak_stability}
  we are interested in two different stability estimates.
  One for low regularity of data (as low as possible such that the discrete problem still is well defined),
  and one for higher regularity. In order to derive error estimates by a duality approach, we only require 
  a stability estimate for higher regularity of data, which we will apply for an appropriate dual problem.
  This estimate can be shown under the assumptions made so far, and thus we present it first in
  \Cref{lem: stability fully}, which is the discrete analog of \Cref{prop:weak_stability}.
  Afterwards, in \Cref{thm:stability_low_regularity} we shall discuss how 
  \Cref{thm:existence_uniqueness_standard_regularity} has to be generalized to the discrete case.
The following proof follows the lines of \cite{MeidnerVexler:2008I} for second order parabolic problems. 
\begin{theorem}\label{lem: stability fully}
  Let $g \in L^2(I;H^1(\Om)) ,\psi_0 \in H^1_0(\Om)$, and let $\psi_{kh} \in X^r_k(V_h)$ be the 
    fully discrete solution to
  \begin{equation}
      {B}_h(\psi_{kh},\varphi_{kh})=\IOprod{\nabla g, \nabla \varphi_{kh}}
      +\Oprod{\na \psi_0,\na \varphi_{kh,0}^+}
      \quad \text{for all $\varphi_{kh}\in X_{k}^r(V_h)$}.
  \end{equation}
  Then there exists a constant $C$ independent of $k$ and $h$ such that 
$$
\sum_{m=1}^M\|\na\partial_t \psi_{kh}\|^2_{I_m\times \Om}+\|\na A_h \psi_{kh}\|^2_{I\times \Om}+\sum_{m=1}^M k^{-1}_m\|[\na \psi_{kh}]_{m-1}\|^2_\Omega + \revB{\vertiii{\psi_{kh,M}^-}^2_h}\le C (\vertiii{\Pi_h \psi_0}^2_h + \|\na g\|^2_{I\times \Om}).
$$  
\end{theorem}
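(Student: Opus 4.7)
The plan is to follow the dG-stability argument for second-order parabolic problems of \cite{MeidnerVexler:2008I}, observing that on each $I_m$ the discrete equation reads
$$
  \Oprod{\nabla\partial_t\psi_{kh} + \nabla A_h\psi_{kh}, \nabla\varphi_{kh}}_{I_m\times\Om}
  + \Oprod{[\nabla\psi_{kh}]_{m-1}, \nabla\varphi_{kh,m-1}^+}
  = \Oprod{\nabla g,\nabla\varphi_{kh}}_{I_m\times\Om},
$$
i.e., a scalar parabolic equation with $A_h$ playing the role of $-\Delta$ in the $\nabla$-inner product. Three test functions will then isolate the three norms of interest.

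For the first norm, I would test with $\varphi_{kh} = A_h\psi_{kh} \in X_k^r(V_h)$. The $a_h$-integral becomes $\|\nabla A_h\psi_{kh}\|_{I\times\Om}^2$ by \eqref{eq:Ah}, while the time-derivative and inter-interval jump contributions, after \eqref{eq:jump_identity ah} and telescoping, produce the non-negative boundary and jump terms $\frac{1}{2}\vertiii{\psi_{kh,M}^-}_h^2 + \frac{1}{2}\vertiii{\psi_{kh,0}^+}_h^2 + \frac{1}{2}\sum_{m=2}^M \vertiii{[\psi_{kh}]_{m-1}}_h^2$. The datum $\Oprod{\nabla\psi_0,\nabla A_h\psi_{kh,0}^+}$ rewrites as $a_h(\Pi_h\psi_0,\psi_{kh,0}^+)$ via \eqref{eq:definition_Pih} (using $A_h\psi_{kh,0}^+\in V_h$); a weighted Young split bounds it by $C\vertiii{\Pi_h\psi_0}_h^2$ while keeping $\vertiii{\psi_{kh,0}^+}_h^2$ on the left. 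For the scaled jumps, I would then test, for each $m$, with the local function $\varphi_{kh}|_{I_m}(t) = \omega_m(t)\,[\psi_{kh}]_{m-1}$ (zero outside $I_m$), where $\omega_m \in \mathbb{P}_r(I_m)$ is the Radau-like scalar polynomial fixed by $\omega_m(t_{m-1}^+) = 1$ and $\int_{I_m}\omega_m q\,dt = 0$ for every $q \in \mathbb{P}_{r-1}(I_m)$; in particular $\|\omega_m\|_{I_m} \le Ck_m^{1/2}$. Since $\partial_t\psi_{kh}|_{I_m} \in \mathbb{P}_{r-1}(I_m;V_h)$, the time-derivative term vanishes by orthogonality, the jump contribution reduces to $\|[\nabla\psi_{kh}]_{m-1}\|_\Om^2$, and Cauchy--Schwarz bounds the $a_h$ and data terms by $Ck_m^{1/2}(\|\nabla A_h\psi_{kh}\|_{I_m\times\Om} + \|\nabla g\|_{I_m\times\Om})\|[\nabla\psi_{kh}]_{m-1}\|_\Om$. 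Dividing through, squaring, dividing by $k_m$ and summing closes the estimate using Step~1; the case $m=1$ is handled by convention \eqref{eq:jump_at_0_convention}, so that the initial datum is absorbed into a true jump.

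Finally, for the time-derivative norm I would test with $\varphi_{kh} = \partial_t\psi_{kh} \in X_k^{r-1}(V_h) \subset X_k^r(V_h)$. The first term of $B_h$ yields $\sum_m\|\nabla\partial_t\psi_{kh}\|_{I_m\times\Om}^2$; the telescoped $a_h$-integral, reorganized via \eqref{eq:jump_identity ah}, leaves only $\vertiii{\cdot}_h$-type boundary and jump norms already controlled in Step~1; and the jump couplings $\sum_{m=1}^M\Oprod{[\nabla\psi_{kh}]_{m-1},\nabla\partial_t\psi_{kh,m-1}^+}$ are estimated with the time inverse estimate $\|\nabla\partial_t\psi_{kh,m-1}^+\|_\Om \le Ck_m^{-1/2}\|\nabla\partial_t\psi_{kh}\|_{I_m\times\Om}$, after which a weighted Young absorbs a small portion of $\|\nabla\partial_t\psi_{kh}\|_{I_m}^2$ on the left and leaves $Ck_m^{-1}\|[\nabla\psi_{kh}]_{m-1}\|_\Om^2$, controlled by Step~2. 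The delicate point, and the main obstacle I foresee, is the nodal bookkeeping in this last step: the telescoping of $\int_I a_h(\psi_{kh},\partial_t\psi_{kh})\,dt$ produces cross terms $a_h([\psi_{kh}]_m,\psi_{kh,m}^+)$ at each internal node, which must be rewritten via \eqref{eq:jump_identity ah} into jump $a_h$-norms and pointwise $\vertiii{\psi_{kh,m}^\pm}_h$-norms that Step~1 must be shown to cover. A cleaner route that bypasses this is to merge the last two tests into $\varphi_{kh}|_{I_m}(t) = \partial_t\psi_{kh}(t) + \omega_m(t)(k_m^{-1}[\psi_{kh}]_{m-1} - \partial_t\psi_{kh,m-1}^+)$, whose compensator is built precisely so that the jump coupling becomes $k_m^{-1}\|[\nabla\psi_{kh}]_{m-1}\|_\Om^2$ with no residual cross term.
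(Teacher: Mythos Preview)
Your three-step strategy is correct, but differs from the paper's in the ordering and in the test functions used for the last two steps. The paper also tests first with $A_h\psi_{kh}$, but then handles the time derivative before the jumps: it tests locally with $(t-t_{m-1})\partial_t\psi_{kh}$, so that the weight vanishes at $t_{m-1}^+$ and the jump coupling disappears outright; an inverse inequality in time removes the weight. Only then are the jumps treated, by testing with the \emph{constant} function $[\psi_{kh}]_{m-1}$ on $I_m$, which requires both previous steps. Your route---Radau weight $\omega_m$ to isolate jumps using only Step~1, then plain $\partial_t\psi_{kh}$ for the time derivative---works equally well and has the advantage that the jump bound is obtained independently of the time-derivative bound.

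The ``main obstacle'' you anticipate in Step~3 is not real: do not telescope $\int_I a_h(\psi_{kh},\partial_t\psi_{kh})\,dt$. Simply rewrite it via \eqref{eq:Ah} as $\sum_m\int_{I_m}(\nabla A_h\psi_{kh},\nabla\partial_t\psi_{kh})_\Omega\,dt$ and apply Cauchy--Schwarz and Young directly; the resulting $\|\nabla A_h\psi_{kh}\|_{I\times\Om}^2$ is already controlled by Step~1. With this, no nodal $\vertiii{\psi_{kh,m}^\pm}_h$ terms ever appear, and the compensator test function you propose at the end is unnecessary. The initial coupling $(\nabla\psi_{kh,0}^+,\nabla\partial_t\psi_{kh,0}^+)_\Omega$ combines with the datum term $(\nabla\psi_0,\nabla\partial_t\psi_{kh,0}^+)_\Omega$ into the $m=1$ jump coupling via convention \eqref{eq:jump_at_0_convention}, exactly as you note, so it falls under the same inverse-estimate treatment.
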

\begin{proof}
  First we test the equation for each $m=1,2,...,M$ with $\varphi_{kh}|_{I_m} =A_h\psi_{kh}|_{I_m}$ and set 
  $\varphi_{kh}$ to zero on all other subintervals. Then on each time subinterval $I_m$, we obtain
$$
\int_{I_m}(\na \partial_t \psi_{kh},\na A_h\psi_{kh} )_\Omega dt+\int_{I_m}\|\na A_h \psi_{kh}\|^2dt +(\na [\psi_{kh}]_{m-1}, \na A_h \psi_{kh,m-1}^+)_\Om=\int_{I_m}(\na g,\na A_h \psi_{kh})_\Omega dt,
$$
where for $m=1$, we have made use of our convention \eqref{eq:jump_at_0_convention}.
We may rewrite this, using
$$
(\partial_t \na \psi_{kh},\na A_h \psi_{kh})_{\ImOm} =\int_{I_m}\frac{1}{2}\frac{d}{dt}a_h(\psi_{kh},\psi_{kh})\ dt = \frac{1}{2}a_h(\psi_{kh,m}^-,\psi_{kh,m}^-)-\frac{1}{2}a_h(\psi_{kh,m-1}^+,\psi_{kh,m-1}^+)
$$
and
$$
(\na [\psi_{kh}]_{m-1}, \na A_h \psi_{kh,m-1}^+)_\Om=a_h([\psi_{kh}]_{m-1},\psi^+_{kh,m-1}),
$$
together with the identity \eqref{eq:jump_identity ah}.
By noticing that the term with $a_h(\psi_{kh,m-1}^+,\psi_{kh,m-1}^+)$ cancels out, as a result
we obtain
$$
\begin{aligned}
\frac{1}{2}a_h(\psi_{kh,m}^-,\psi_{kh,m}^-)-\frac{1}{2}a_h(\psi_{kh,m-1}^-,\psi_{kh,m-1}^-)+\frac{1}{2}a_h([\psi_{kh}]_{m-1},[\psi_{kh}]_{m-1}) &+\int_{I_m}\|\na A_h \psi_{kh}\|_\Om^2dt\\&=(\na g, \na A_h \psi_{kh})_{\ImOm}.
\end{aligned}
$$
Summing over $m$, and using the convention $\psi_{kh,0}^- = \Pi_h \psi_0$ yields
$$
\begin{aligned}
\frac{1}{2}a_h(\psi_{kh,M}^-,\psi_{kh,M}^-)+\frac{1}{2}\sum_{m=1}^Ma_h([\psi_{kh}]_{m-1},[\psi_{kh}]_{m-1})&+\int_I \|\na A_h \psi_{kh}\|_\Om^2dt\\
&=(\na g,\na A_h\psi_{kh})_{\IOm}+\frac{1}{2}a_h(\Pi_h\psi_0,\Pi_h\psi_0).
\end{aligned}
$$
Using the Cauchy-Schwarz and Young's inequalities, we have
\revB{
$$
\begin{aligned}
\frac{1}{2} \vertiii{\psi_{kh,M}^-}^2_h + \|\na A_h \psi_{kh}\|^2_{\IOm} &\le \|\na g\|_{I\times \Om}\|\na A_h\psi_{kh}\|_{\IOm}+\frac{1}{2}\vertiii{\Pi_h\psi_0}_h^2\\
&\le \frac{1}{2}\|\na g\|_{I\times \Om}^2+\frac{1}{2}\|\na A_h\psi_{kh}\|^2_{\IOm}+\frac{1}{2}\vertiii{\Pi_h\psi_0}_h^2.
\end{aligned}
 $$
}
Canceling, we obtain
\begin{equation}\label{eq: estimates for integral of ah}
\revB{\vertiii{\psi_{kh,M}^-}^2_h}+ \|\na A_h \psi_{kh}\|^2_{\IOm}\le \|\na g\|^2_{I\times \Om}+\vertiii{\Pi_h\psi_0}_h^2.
\end{equation}

{\it Step 2. Time derivative terms.}

Note that for the case $r=0$ there is nothing to prove here, hence we assume in this step $r\ge 1$.
Noticing that $(t-t_{m-1})\partial_t \psi_{kh}\in \mathbb{P}_r(I_m;V_h)$, we  choose $\varphi_{kh}=(t-t_{m-1})\partial_t \psi_{kh}$ on the time interval $I_m$ and zero on others. Noticing that the term with jumps vanishes, we have
$$
\int_{I_m}(t-t_{m-1})\|\na \partial_t\psi_{kh}\|^2_\Omega dt+\int_{I_m}(t-t_{m-1})a_h(\psi_{kh},\partial_t \psi_{kh})dt =\int_{I_m}(t-t_{m-1})(\na g,\na \partial_t\psi_{kh})_\Omega dt.
$$
For the second term, using the Cauchy-Schwarz and Young's inequalities,
$$
\begin{aligned}
\int_{I_m}(t-t_{m-1})a_h(\psi_{kh},\partial_t \psi_{kh})dt&=\int_{I_m}(t-t_{m-1})( \na \partial_t\psi_{kh},\na A_h \psi_{kh})_\Omega dt\\
&\le \left(\int_{I_m}(t-t_{m-1})\| \na\partial_t \psi_{kh}\|_\Om^2dt\right)^{1/2}\left(\int_{I_m}(t-t_{m-1})\| \na A_h\psi_{kh}\|_\Om^2dt\right)^{1/2}\\
&\le \frac{1}{4}\int_{I_m}(t-t_{m-1})\| \na \partial_t \psi_{kh}\|_\Om^2dt+\int_{I_m}(t-t_{m-1})\| \na A_h\psi_{kh}\|_\Om^2dt.
\end{aligned}
$$
Similarly, 
using the Cauchy-Schwarz and Young's inequalities,
$$
\begin{aligned}
\int_{I_m}(t-t_{m-1})(\na g,\na\partial_t \psi_{kh})_\Omega dt&\le \left(\int_{I_m}(t-t_{m-1})\|\na g\|^2_\Omega dt\right)^{1/2}\left(\int_{I_m}(t-t_{m-1})\|\na \partial_t\psi_{kh}\|^2_\Omega dt\right)^{1/2}\\
&\le \frac{1}{4}\int_{I_m}(t-t_{m-1})\| \na\partial_t \psi_{kh}\|_\Om^2dt+\int_{I_m}(t-t_{m-1})\| \na g\|^2_\Om dt.
\end{aligned}
$$
Combining estimates, and kicking back, we obtain
$$
\int_{I_m}(t-t_{m-1})\|\na \partial_t\psi_{kh}\|^2_\Omega dt\le C\int_{I_m}(t-t_{m-1})\|\na g\|^2_\Omega dt+C\int_{I_m}(t-t_{m-1})\| \na A_h\psi_{kh}\|^2_\Om dt.
$$
Using the inverse inequality
$$
 \int_{I_m}\|\chi_{k}\|^2_\Omega dt\le Ck_{m}^{-1}\int_{I_m}(t-t_{m-1})\|\chi_{k}\|^2_\Omega dt
$$
which holds for any piecewise polynomial in time with fixed maximal degree,
for example for $\chi_k=\na\partial_t \psi_{kh}$, we have
$$
\begin{aligned}
\int_{I_m}\|\na \partial_t\psi_{kh}\|^2_\Omega dt&\le  Ck_{m}^{-1}\int_{I_m}(t-t_{m-1})\|\na g\|^2_\Omega dt+Ck_{m}^{-1}\int_{I_m}(t-t_{m-1})\| \na A_h\psi_{kh}\|_\Om^2dt\\
&\le  C\int_{I_m}\|\na g\|^2_\Omega dt+C\int_{I_m}\| \na A_h\psi_{kh}\|_\Om^2dt.
\end{aligned}
$$
Summing over $m$ and using \eqref{eq: estimates for integral of ah} yields
\begin{equation}\label{eq: estimates for integral of pa_tvkh}
  \int_{I}\|\na \partial_t\psi_{kh}\|^2_\Omega dt\le C(\vertiii{\Pi_h \psi_0}^2_h + \|\na g\|^2_{I\times \Om}).
\end{equation}

{\it Step 3. Jump terms.}

For every $m=1,...,M$, we choose $\varphi_{kh}|_{I_m} =[\psi_{kh}]_{m-1}$ and set $\varphi_{kh}$
to zero on all other subintervals. Inserting this choice yields
$$
\int_{I_m}(\na\partial_t\psi_{kh},[\na \psi_{kh}]_{m-1})_\Om dt+\int_{I_m}(\na A_h\psi_{kh},[\na \psi_{kh}]_{m-1})_\Om dt +\|[\na \psi_{kh}]_{m-1}\|^2_\Omega =\int_{I_m} (\na g, [\na \psi_{kh}]_{m-1})_\Om dt.
$$
Using the Cauchy-Schwarz and Young's inequalities,
$$
\begin{aligned}
\int_{I_m}(\na\partial_t\psi_{kh},[\na \psi_{kh}]_{m-1})_\Om dt&\le 
\left(k_m\int_{I_m}\|\na\partial_t\psi_{kh}\|^2_\Omega dt\right)^{1/2}\left(k^{-1}_m\int_{I_m}\|[\na \psi_{kh}]_{m-1} \|^2_\Omega dt\right)^{1/2}\\
&\le 
k_m\int_{I_m}\|\na\partial_t\psi_{kh}\|^2_\Omega dt+\frac{1}{4}\|[\na \psi_{kh}]_{m-1} \|^2_\Omega,
\end{aligned}
$$
where we used that $\varphi_{kh}|_{I_m} = [\psi_{kh}]_{m-1}$ is constant on $I_m$. Similarly,
$$
\begin{aligned}
\int_{I_m}(\na A_h\psi_{kh},[\na \psi_{kh}]_{m-1})_\Om dt&\le 
\left(k_m\int_{I_m}\|\na A_h\psi_{kh}\|^2_\Omega dt\right)^{1/2}\left(k^{-1}_m\int_{I_m}\|[\na \psi_{kh}]_{m-1} \|^2_\Omega dt\right)^{1/2}\\
&\le 
k_m\int_{I_m}\|\na A_h\psi_{kh}\|^2_\Omega dt+\frac{1}{4}\|[\na \psi_{kh}]_{m-1} \|^2_\Omega,
\end{aligned}
$$
and
$$
\begin{aligned}
  \int_{I_m} (\na g, [\na \psi_{kh}]_{m-1})_\Om dt&\le \left(k_m\int_{I_m}\|\na g\|^2_\Omega dt\right)^{1/2}\left(k^{-1}_m\int_{I_m}\|[\na \psi_{kh}]_{m-1} \|^2_\Omega dt\right)^{1/2}\\
                                             &\le k_m\int_{I_m}\|\na g\|^2_\Omega dt+\frac{1}{4}\|[\na \psi_{kh}]_{m-1} \|^2_\Omega.
\end{aligned}
$$
Combining, we obtain
$$
k^{-1}_m\|[\na \psi_{kh}]_{m-1}\|^2_\Omega \le C\left(\int_{I_m}\|\na g\|^2_\Omega dt+\int_{I_m}\|\na\partial_t\psi_{kh}\|^2_\Omega dt+\int_{I_m}\|\na A_h\psi_{kh}\|^2_\Omega dt\right).
$$ 
Summing over $m$ and using \eqref{eq: estimates for integral of ah} and \eqref{eq: estimates for integral of pa_tvkh}, we obtain
$$
\sum_{m=1}^Mk^{-1}_m\|[\na \psi_{kh}]_{m-1}\|^2_\Omega 
\le C(\vertiii{\Pi_h \psi_0}^2_h + \|\na g\|^2_{I\times \Om}),
$$
which completes the proof of the lemma. 
\end{proof}

After having shown the discrete stability result for higher regularity data, let us discuss 
  the lower regularity setting. Note that we only include this result for completeness, and do not 
  require it for the error estimates.
  \begin{theorem}\label{thm:stability_low_regularity}
    Let \Cref{ass:Vh_contained} be satisfied, and let
    $f\in L^2(I;H^{-1-\theta}(\Om))$, $\psi_0 \in H^1_0(\Om)$.
  Let $\psi_{kh} \in X^r_k(V_h)$ solve \eqref{eq:fully_discrete_instationary}. Then we have the following.
  If there exists a constant $\widetilde C>0$ independent of $h$, such that 
  \begin{equation}\label{eq:ass_stat_stab}
    \|\varphi_h\|_{H^{1+\theta}_0(\Om)} \le \widetilde C \vertiii{\varphi_h}_h \qquad \text{for all } \varphi_h \in V_h,
  \end{equation}
  then there exists a constant $C>0$ independent of $k,h$ such that:
  \begin{equation*}
    \int_I \vertiii{\psikh}^2_h \ dt + \sum_{m=2}^M \|\na [\psi_{kh}]_{m-1}\|_\Om^2 
    +  \max_{1\le m \le M} \|\na \psi_{kh,m}^-\|_\Om^2
    \le C \|f\|_{L^2(I;H^{-1-\theta}(\Om))}^2 + \|\na \psi_0\|_\Om^2 .
  \end{equation*}
\end{theorem}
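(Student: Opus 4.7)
The plan is to test the fully discrete equation with $\varphi_{kh} = \psi_{kh}$ restricted to an initial segment of time intervals, and then employ exactly the same energy identity that underlies \eqref{eq:jump_identity}, but applied to $w = \nabla \psi_{kh}$ rather than to $a_h(\cdot,\cdot)$. More precisely, for any fixed $m^* \in \{1,\dots,M\}$, the function that equals $\psi_{kh}$ on $I_1 \cup \cdots \cup I_{m^*}$ and vanishes on the later intervals lies in $X_k^r(V_h)$ and is therefore an admissible test function for \eqref{eq:fully_discrete_instationary}. First I would collect the resulting left-hand side by combining the time-derivative contribution $\sum_{m=1}^{m^*}\frac12(\|\nabla \psi_{kh,m}^-\|_\Omega^2 - \|\nabla \psi_{kh,m-1}^+\|_\Omega^2)$ with the jump sum $\sum_{m=2}^{m^*}([\nabla \psi_{kh}]_{m-1},\nabla \psi_{kh,m-1}^+)_\Omega$ via \eqref{eq:jump_identity}, exactly as is done (for the $a_h$-bilinear form) in Step~1 of the proof of Theorem~\ref{lem: stability fully}. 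A standard telescoping then yields the identity
\begin{equation*}
\tfrac12 \|\nabla \psi_{kh,m^*}^-\|_\Omega^2 + \tfrac12 \sum_{m=2}^{m^*} \|[\nabla \psi_{kh}]_{m-1}\|_\Omega^2 + \tfrac12 \|\nabla \psi_{kh,0}^+\|_\Omega^2 + \int_0^{t_{m^*}} \vertiii{\psi_{kh}}_h^2 \, dt = (\nabla \psi_0, \nabla \psi_{kh,0}^+)_\Omega + \int_0^{t_{m^*}} \langle f, \psi_{kh} \rangle \, dt.
\end{equation*}

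Next I would handle the initial data contribution. Using the convention $\psi_{kh,0}^- = \Pi_h \psi_0$ together with the defining Galerkin property of $\Pi_h$, i.e.\ $(\nabla \psi_0, \nabla v_h)_\Omega = (\nabla \Pi_h \psi_0, \nabla v_h)_\Omega$ for $v_h \in V_h$, one gets
\begin{equation*}
\tfrac12 \|\nabla \psi_{kh,0}^+\|_\Omega^2 - (\nabla \psi_0, \nabla \psi_{kh,0}^+)_\Omega = -\tfrac12 \|\nabla \Pi_h \psi_0\|_\Omega^2 + \tfrac12 \|\nabla [\psi_{kh}]_0\|_\Omega^2,
\end{equation*}
after expanding $\nabla \psi_{kh,0}^+ = \nabla \Pi_h \psi_0 + \nabla [\psi_{kh}]_0$. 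The stability of $\Pi_h$ in $H^1_0(\Omega)$, \eqref{eq:stability_Pih}, then controls $\|\nabla \Pi_h \psi_0\|_\Omega$ by $\|\nabla \psi_0\|_\Omega$, while the nonnegative initial jump term just strengthens the left-hand side.

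For the right-hand side dual pairing, the crucial use of \eqref{eq:ass_stat_stab} occurs: for a.e.\ $t$, $\psi_{kh}(t) \in V_h \subset H^{1+\theta}_0(\Omega)$ and
\begin{equation*}
\langle f(t), \psi_{kh}(t) \rangle \le \|f(t)\|_{H^{-1-\theta}(\Omega)} \|\psi_{kh}(t)\|_{H^{1+\theta}_0(\Omega)} \le \widetilde C \|f(t)\|_{H^{-1-\theta}(\Omega)} \vertiii{\psi_{kh}(t)}_h.
\end{equation*}
Integrating in time, applying Cauchy--Schwarz and Young's inequality with an appropriate weight, this bound is absorbed into the $\int_0^{t_{m^*}} \vertiii{\psi_{kh}}_h^2\,dt$ contribution on the left. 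After absorbing and multiplying by $2$, we obtain
\begin{equation*}
\|\nabla \psi_{kh,m^*}^-\|_\Omega^2 + \sum_{m=2}^{m^*} \|[\nabla \psi_{kh}]_{m-1}\|_\Omega^2 + \int_0^{t_{m^*}} \vertiii{\psi_{kh}}_h^2 \, dt \le C \left(\|\nabla \psi_0\|_\Omega^2 + \|f\|_{L^2(I;H^{-1-\theta}(\Omega))}^2\right).
\end{equation*}
Taking the maximum over $m^*$ on the left and noting that the right-hand side is independent of $m^*$ yields the stated estimate.

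The main obstacle I anticipate is the careful bookkeeping of the initial jump $[\psi_{kh}]_0$: the convention \eqref{eq:jump_at_0_convention} forces us to recognize that the projected initial datum $\Pi_h \psi_0$ (not $\psi_0$ itself) plays the role of $\psi_{kh,0}^-$, and only through the defining Galerkin identity of $\Pi_h$ do the mixed inner products $(\nabla \psi_0, \nabla [\psi_{kh}]_0)_\Omega$ and $(\nabla \Pi_h \psi_0, \nabla [\psi_{kh}]_0)_\Omega$ cancel cleanly. The other nontrivial ingredient is \eqref{eq:ass_stat_stab}, which is precisely what permits the pairing with $f \in L^2(I;H^{-1-\theta}(\Omega))$ to be controlled by the energy norm $\vertiii{\cdot}_h$; without this hypothesis the estimate would not close.
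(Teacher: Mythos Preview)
Your proof is correct and follows essentially the same strategy as the paper: test \eqref{eq:fully_discrete_instationary} with $\psi_{kh}$ (restricted to $I_1\cup\cdots\cup I_{m^*}$), use the jump identity \eqref{eq:jump_identity} to obtain the energy equality, and invoke \eqref{eq:ass_stat_stab} together with Young's inequality to absorb the $f$-term. The only notable difference is your treatment of the initial contribution: the paper simply applies Young's inequality to $(\nabla\psi_0,\nabla\psi_{kh,0}^+)_\Omega$ and absorbs $\tfrac12\|\nabla\psi_{kh,0}^+\|_\Omega^2$, whereas you exploit the $\Pi_h$-orthogonality to rewrite this term exactly and gain the extra nonnegative jump $\tfrac12\|\nabla[\psi_{kh}]_0\|_\Omega^2$ on the left---a slightly sharper identity, though not needed for the stated estimate.
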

\begin{proof}
  We begin by testing \eqref{eq:fully_discrete_instationary} with $\psi_{kh}$. For the left hand side, we obtain
  with the fundamental theorem of calculus, and the identity \eqref{eq:jump_identity} applied 
  to the jump term after canceling:
  \begin{equation}\label{eq:discrete_stability_tested}
    \begin{aligned}
    B_h(\psikh,\psikh)  &= \sum_{m=1}^M \ImOprod{\nabla \pa_t \psikh,\na\psikh} + \int_I a_h(\psikh,\psikh) \ dt\\
                        & \quad + \sum_{m=2}^M \Oprod{\na [\psikh]_{m-1},\na \psi_{kh,m-1}^+} 
                            + \Oprod{\na \psi_{kh,0}^+,\na \psi_{kh,0}^+}\\
                        &= \sum_{m=1}^M \left[\frac12 \|\na \psi_{kh,m}^-\|_\Om^2 - \frac12 \|\na \psi_{kh,m-1}^+\|_\Om^2\right]
                             + \int_I \vertiii{\psikh}^2_h \ dt\\
                        & \quad + \frac12 \sum_{m=2}^M \left[\|\na \psi_{kh,m-1}^+\|_\Om^2 + \|\na [\psi_{kh}]_{m-1}\|_\Om^2 - \|\na \psi_{kh,m-1}^-\|_\Om^2 \right]
                            + \|\na \psi_{kh,0}^+\|_\Om^2\\
                        &=\int_I \vertiii{\psikh}^2_h \ dt + \frac12 \sum_{m=2}^M \|\na [\psi_{kh}]_{m-1}\|_\Om^2 
                          + \frac12 \|\na \psi_{kh,0}^+\|_\Om^2+ \frac12 \|\na \psi_{kh,M}^-\|_\Om^2.
  \end{aligned}
  \end{equation}
  For the right-hand side, using \eqref{eq:ass_stat_stab}, we obtain together with Young's inequality
  \begin{align*}
    \IOpair{f,\psikh} + \Oprod{\na \psi_0,\na \psi_{kh,0}^+}
    &\le \|f\|_{L^2(I;H^{-1-\theta}(\Om))}\|\psikh\|_{L^2(I;H^{1+\theta}(\Om))} + \|\na \psi_0\|_\Om \|\na \psi_{kh,0}^+\|_\Om\\
    &\le C \|f\|_{L^2(I;H^{-1-\theta}(\Om))}^2 + \frac12 \int_I \vertiii{\psikh}_h^2 dt + \frac12 \|\na \psi_0\|_\Om^2 
      + \frac12 \|\na \psi_{kh,0}^+\|_\Om^2.
  \end{align*}
  We can absorb the second and fourth term into the last line of \eqref{eq:discrete_stability_tested}
  and multiply by 2, to obtain
  \begin{equation*}
    \int_I \vertiii{\psikh}^2_h \ dt + \sum_{m=2}^M \|\na [\psi_{kh}]_{m-1}\|_\Om^2 
    + \|\na \psi_{kh,M}^-\|_\Om^2
    \le C \|f\|_{L^2(I;H^{-1-\theta}(\Om))}^2 +  \|\na \psi_0\|_\Om^2 .
  \end{equation*}
  This is almost the claimed result. If we test \eqref{eq:fully_discrete_instationary}
  not with $\psikh$, but with a function which coincides with $\psikh$ on $I_1 \cup ... \cup I_m$
  and is zero on the remaining intervals, we get the final result by exactly the same argument.
\end{proof}
\begin{remark}
  It is straightforward to see, that the additional condition \eqref{eq:ass_stat_stab} is equivalent to 
  a stability estimate 
  $
    \vertiii{\varphi_h}_h \le C \|f\|_{H^{-1-\theta}(\Om)},
    $
  for all $f \in H^{-1-\theta}(\Om)$, where $\varphi_h$ solves the stationary problem
  \begin{equation}
    a_h(\varphi_h,\phi_h) = \Opair{f,\phi_h} \qquad \text{for all } \phi_h \in V_h.
  \end{equation}
\end{remark}
\begin{remark}
  For $C^1$ finite elements satisfying $V_h \subset H^2_0(\Om)$ the condition \eqref{eq:ass_stat_stab} 
  is trivially satisfied.
  For the standard $C^0$ finite elements used in the interior penalty method of 
  \cite{BrennerS_SungLY_2005} it necessarily has to hold $\theta < \frac12$.
  One may then employ the arguments of \cite{belgacem_nonstandard_2001}, to obtain a localized expression
  of the $H^{1+\theta}_0(\Om)$ norm:
  \begin{equation*}
    \|\varphi_h\|_{H^{1+\theta}(\Om)}^2 \le C_\theta \sum_{T \in \mathcal T_h} \|\varphi_h\|_{H^{1+\theta}(T)}^2.
  \end{equation*}
  Each of the local terms $\|\varphi_h\|_{H^{1+\theta}(T)}^2$ can be bounded by the corresponding
  term $\|\varphi_h\|_{H^{2}(T)}^2$ contained in $\vertiii{\varphi_h}_h^2$.
\end{remark}
We are now able to formulate our main result of the paper.

\begin{theorem}[Main result]\label{thm:error fully discrete}
  Let the Assumptions \ref{ass:Vh_contained} and \ref{ass:discrete_bilin_form} be fulfilled.
  Let $\psi \in \XX$ and $\psi_{kh} \in X_k^r(V_h)$ be the exact and the fully discrete Galerkin solutions satisfying \eqref{eq:transient_biharmonic_weak} and \eqref{eq:fully_discrete_instationary} respectively 
  for given data $f \in L^2(I;H^{-1-\theta}(\Om))$, $\psi_0 \in H^{2-\theta}_0(\Om)$.
  Then, there exists a constant $C$ independent of $k$ and $h$ such that  for any $\chi_{kh}\in X^r_k(V_h)$
\begin{equation}\label{eq:error fully discrete}
    \norm{\na(\psi-\psi_{kh})}_{I\times\Omega} 
    \leq C  \Big(\norm{\na(\psi-\chi_{kh})}_{I\times\Omega}
    +\norm{\na (R_h \psi - \psi)}_{I\times\Omega} 
    +\norm{\na (\pi_k\psi - \psi)}_{I\times\Omega} \Big).
\end{equation}
\end{theorem}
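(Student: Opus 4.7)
The plan is to follow the duality strategy used for second-order parabolic problems in
\cite{LeykekhmanD_VexlerB_2016a,LeykekhmanD_VexlerB_2017a}, replacing the Laplacian throughout by
$\mathcal A$. First I would use the triangle inequality to write
\[
  \|\nabla(\psi - \psi_{kh})\|_{I\times\Omega}
  \le \|\nabla(\psi - \chi_{kh})\|_{I\times\Omega} + \|\nabla e_{kh}\|_{I\times\Omega},
  \qquad e_{kh}:=\chi_{kh}-\psi_{kh}\in X^r_k(V_h),
\]
so that the task reduces to bounding the discrete residual $\|\nabla e_{kh}\|_{I\times\Omega}$.
To do so, I would introduce the discrete dual problem: find $z_{kh}\in X^r_k(V_h)$ such that
\[
  B_h(\varphi_{kh},z_{kh}) = (\nabla\varphi_{kh},\nabla e_{kh})_{I\times\Omega}
  \quad\text{for all } \varphi_{kh}\in X^r_k(V_h).
\]
Choosing $\varphi_{kh}=e_{kh}$ and invoking the Galerkin orthogonality
\eqref{eq: Galerkin orthogonality fully discrete} gives
$\|\nabla e_{kh}\|^2_{I\times\Omega}=B_h(e_{kh},z_{kh})=B_h(\chi_{kh}-\psi,z_{kh})$.

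The second ingredient is a dual analogue of \Cref{lem: stability fully}. Rewriting the dual
equation via the dual form \eqref{eq:fully_discrete_bilinear_form_dual}, $z_{kh}$ is a
backward discrete solution with vanishing data at $t=T$, and mirroring the three energy steps
in the proof of \Cref{lem: stability fully} (or equivalently reversing time $t\mapsto T-t$) should yield
\[
  \sum_{m=1}^M \|\nabla\partial_t z_{kh}\|^2_{I_m\times\Omega}
  +\|\nabla A_h z_{kh}\|^2_{I\times\Omega}
  +\sum_{m=1}^{M-1} k_m^{-1}\|[\nabla z_{kh}]_m\|^2_\Omega
  +\|\nabla z_{kh,M}^-\|^2_\Omega
  \le C\,\|\nabla e_{kh}\|^2_{I\times\Omega}.
\]

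Next I would expand $B_h(\chi_{kh}-\psi,z_{kh})$ via \eqref{eq:fully_discrete_bilinear_form_dual}
and estimate each of its four pieces. In the time-derivative term
$-\sum_m(\nabla(\chi_{kh}-\psi),\partial_t\nabla z_{kh})_{I_m}$, since
$\partial_t\nabla z_{kh}|_{I_m}$ is componentwise a polynomial in time of degree $r-1$, the
$L^2$-orthogonality in \eqref{eq: projection pik} (applied componentwise, using commutativity
of $\pi_k$ with spatial derivatives) allows replacing $\psi$ by $\pi_k\psi$, and Cauchy-Schwarz
together with the dual stability produces
$C\|\nabla(\chi_{kh}-\pi_k\psi)\|_{I\times\Omega}\|\nabla e_{kh}\|_{I\times\Omega}$. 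For the
elliptic term $\int_I a_h(\chi_{kh}-\psi,z_{kh})\,dt$ the Ritz orthogonality
\eqref{eq:def_ritz_projection} lets me replace $\psi$ by $R_h\psi$ (this uses
$\psi(t)\in X$ for a.e.\ $t$, which by \Cref{corr:parabolic_regularity_interpolated}
is ensured by the data assumptions with $\theta'=1-\theta$), and the identity
$a_h(\phi_h,z_{kh})=(\nabla\phi_h,\nabla A_h z_{kh})$ combined with the bound on
$\|\nabla A_h z_{kh}\|_{I\times\Omega}$ gives
$C\|\nabla(\chi_{kh}-R_h\psi)\|_{I\times\Omega}\|\nabla e_{kh}\|_{I\times\Omega}$. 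For the jump
and terminal terms, continuity of $\nabla\psi$ in time (\Cref{lemm:H1_continuity}) together with
$\pi_k\psi(t_m^-)=\psi(t_m^-)$ reduces evaluations of $\psi$ at $t_m^-$ to those of $\pi_k\psi$,
and a standard polynomial inverse estimate in time
$k_m\|\nabla(\chi_{kh}-\pi_k\psi)_m^-\|^2_\Omega\le C\|\nabla(\chi_{kh}-\pi_k\psi)\|^2_{I_m\times\Omega}$
pairs via Cauchy-Schwarz with the dual jump and terminal quantities. Assembling the four
estimates and using the triangle inequalities
$\|\nabla(\chi_{kh}-\pi_k\psi)\|\le\|\nabla(\chi_{kh}-\psi)\|+\|\nabla(\psi-\pi_k\psi)\|$ and
$\|\nabla(\chi_{kh}-R_h\psi)\|\le\|\nabla(\chi_{kh}-\psi)\|+\|\nabla(\psi-R_h\psi)\|$, I obtain
\[
  \|\nabla e_{kh}\|^2_{I\times\Omega}
  \le C\bigl(\|\nabla(\chi_{kh}-\psi)\|_{I\times\Omega}
     +\|\nabla(\psi-R_h\psi)\|_{I\times\Omega}
     +\|\nabla(\psi-\pi_k\psi)\|_{I\times\Omega}\bigr)\|\nabla e_{kh}\|_{I\times\Omega},
\]
and dividing by $\|\nabla e_{kh}\|_{I\times\Omega}$ (if nonzero) and combining with the opening
triangle inequality yields \eqref{eq:error fully discrete}.

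The hard part is the dual stability estimate: \Cref{lem: stability fully} is stated only for the
primal problem, and the terminal pairing $(\nabla(\chi_{kh}-\psi)_M^-,\nabla z_{kh,M}^-)$ in the
dual form of $B_h$ forces the additional endpoint bound
$\|\nabla z_{kh,M}^-\|_\Omega\le C\|\nabla e_{kh}\|_{I\times\Omega}$ which is not explicitly
tracked in that lemma. This has to be recovered either by rerunning the three energy tests
(test by $z_{kh}$ itself to pick up $\|\nabla z_{kh,M}^-\|^2$, test by $A_h z_{kh}$ with a
time weight to pick up $\|\nabla\partial_t z_{kh}\|^2$, test by individual jumps), or by the
time-reversal change of variables $\tilde z(t)=z_{kh}(T-t)$ that turns the dual into a primal
problem of the same type covered by \Cref{lem: stability fully}.
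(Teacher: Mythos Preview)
Your proposal is correct and follows essentially the same duality approach as the paper: introduce the backward discrete dual problem with right-hand side $\nabla e_{kh}$, invoke Galerkin orthogonality to replace $\psi_{kh}$ by $\psi$, expand $B_h(\chi_{kh}-\psi,z_{kh})$ via the dual form \eqref{eq:fully_discrete_bilinear_form_dual}, and pair the resulting three pieces with the dual stability of \Cref{lem: stability fully}.

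Two small differences worth noting. First, for the time-derivative piece the paper simply uses Cauchy--Schwarz directly,
\[
  \Bigl|\sum_m(\nabla(\psi-\chi_{kh}),\nabla\partial_t z_{kh})_{I_m\times\Omega}\Bigr|
  \le \|\nabla(\psi-\chi_{kh})\|_{I\times\Omega}\,
      \Bigl(\textstyle\sum_m\|\nabla\partial_t z_{kh}\|_{I_m\times\Omega}^2\Bigr)^{1/2},
\]
without invoking the $\pi_k$-orthogonality; since $\|\nabla(\psi-\chi_{kh})\|_{I\times\Omega}$ already appears on the right of \eqref{eq:error fully discrete} this is enough, and slightly simpler than your treatment. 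Second, the paper absorbs the terminal pairing into the jump sum via the convention $[z_{kh}]_M:=-z_{kh,M}^-$, so that $J_3=-\sum_{m=1}^M(\nabla(\psi-\chi_{kh})(t_m^-),[\nabla z_{kh}]_m)$ and the time-reversal of \Cref{lem: stability fully} with zero terminal data yields
\[
  \sum_{m=1}^M k_m^{-1}\|[\nabla z_{kh}]_m\|_\Omega^2 \le C\,\|\nabla e_{kh}\|_{I\times\Omega}^2,
\]
which in particular gives the $m=M$ contribution $k_M^{-1}\|\nabla z_{kh,M}^-\|_\Omega^2\le C\|\nabla e_{kh}\|_{I\times\Omega}^2$. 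You actually need this $k_M^{-1}$-weighted bound (not the unweighted one you stated) in order to pair with the inverse estimate $k_M\|\nabla(\chi_{kh}-\pi_k\psi)_M^-\|_\Omega^2\le C\|\nabla(\chi_{kh}-\pi_k\psi)\|_{I_M\times\Omega}^2$; with this correction your argument goes through.
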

\begin{proof} 
  Let $\chi_{kh} \in X_k^r(V_h)$ be arbitrary but fixed. 
    We begin by showing the estimate
\begin{equation}\label{eq:error_fully_discrete_chikh}
    \norm{\na(\psi_{kh} - \chi_{kh})}_{I\times\Omega} 
    \leq C  \Big(\norm{\na(\psi-\chi_{kh})}_{I\times\Omega}
    +\norm{\na (R_h \psi - \chi_{kh})}_{I\times\Omega} 
    +\norm{\na (\pi_k\psi - \chi_{kh})}_{I\times\Omega} \Big).
\end{equation}
  To this end we consider the dual problem
\begin{equation}\label{eq: dual problem}
  B_h(\varphi_{kh},z_{kh})=(\na (\psi_{kh} - \chi_{kh}),\na \varphi_{kh})_{I\times\Omega}\quad \textforall\varphi_{kh}\in X_{k}^r(V_h).
\end{equation}
Note that due to this definition, $z_{kh}$ has a zero terminal condition at $t=T$. Hence,
analogously to \eqref{eq:jump_at_0_convention}, we use the abbreviating notation $[z_{kh}]_M = 0 - z_{kh,M}^-$.
Then, due to the assumption on the data, by the Galerkin orthogonality 
\eqref{eq: Galerkin orthogonality fully discrete} and using the dual expression of the bilinear form
\eqref{eq:fully_discrete_bilinear_form_dual},
$$
\begin{aligned}
  \|\na (\psi_{kh}-\chi_{kh})\|^2_{I\times\Omega}
  &=B_h(\psi_{kh} - \chi_{kh},z_{kh})=B_h(\psi-\chi_{kh},z_{kh})\\
  &=-\sum_{m=1}^M \int_{I_m}(\na (\psi - \chi_{kh}),\na \partial_t z_{kh})_\Omega \ dt
  +\int_I a_h(\psi - \chi_{kh},z_{kh})dt \\
  & \qquad -\sum_{m=1}^{M}(\na (\psi(t_m)-\chi_{kh}(t_m)),[\na z_{kh}]_m)_\Omega\\
&:=J_1+J_2+J_3.
\end{aligned}
$$
In the following steps, we repeatedly apply the result of \Cref{lem: stability fully}, which was 
stated for a forward problem, to the dual problem \eqref{eq: dual problem}, which is defined backwards in time.
The results of the lemma still hold true with the appropriate modifications. Doing so yields for the 
first term the estimate
$$
J_1\le \left(\sum_{m=1}^M \|\na (\psi-\chi_{kh})\|_{I_m\times \Om}^2\right)^{\frac12}
\left(\sum_{m=1}^M \|\na \partial_t z_{kh}\|_{I_m \times \Om}^2\right)^{\frac12}
\le C\|\na (\psi-\chi_{kh})\|_{I\times \Om} \|\na (\psi_{kh}-\chi_{kh})\|_{I\times \Om}.
$$
To estimate $J_2$, we use the definition of the operators  $R_h$ and $A_h$, and 
\Cref{lem: stability fully} applied to $z_{kh}$ to obtain 
$$
\begin{aligned}
  J_2 &= \int_I a_h(R_h \psi -\chi_{kh},z_{kh})dt
  =\int_I (\na (R_h \psi - \chi_{kh}),\na A_hz_{kh})_\Om dt\\
      &\le \|\na (R_h \psi - \chi_{kh})\|_{I\times \Om}\|\na A_hz_{kh}\|_{I\times \Om} 
      \le C\|\na (R_h \psi - \chi_{kh})\|_{I\times \Om}\|\na (\psi_{kh}-\chi_{kh})\|_{I\times \Om}.
\end{aligned}
$$
Similarly, using the definition of the operator $\pi_k$, we have 
$$
\begin{aligned}
  J_3 &= \sum_{m=1}^M(\na(\pi_k \psi(t_m)-\chi_{kh}(t_m)),[\na z_{kh}]_m)_\Omega\\
      &\le  \sum_{m=1}^M\|\na (\pi_k  \psi(t_m) - \chi_{kh}(t_m))\|_\Omega\|[\na z_{kh}]_m\|_\Omega\\
      &\le  \left(\sum_{m=1}^M k_m\|\na(\pi_k \psi - \chi_{kh})\|^2_{L^\infty(I_m; L^2(\Om))}\right)^{1/2}\left(\sum_{m=1}^M k^{-1}_m\|[\na z_{kh}]_m\|^2_\Omega\right)^{1/2}\\
      &\le  C\left(\sum_{m=1}^M \|\na(\pi_k \psi -\chi_{kh})\|^2_{I_m\times\Omega}\right)^{1/2}\left(\sum_{m=1}^M k^{-1}_m\|[\na z_{kh}]_m\|^2_\Omega\right)^{1/2}\\
      &\le C\|\na (\pi_k \psi -\chi_{kh})\|_{I\times \Om}\|\na (\psi_{kh} - \chi_{kh})\|_{I\times \Om},
\end{aligned}
$$
where we used the one-dimensional inverse inequality on $I_m$.
Combining the estimates for $J_1$, $J_2$, and $J_3$ and canceling $\|\na( \psi_{kh} - \chi_{kh})\|_{I\times \Om}$ on both sides, we obtain the estimate \eqref{eq:error_fully_discrete_chikh}.  
From this, we obtain by inserting twice $\pm \psi$ and applying triangle inequality, that
\begin{equation*}
    \norm{\na(\psi-\psi_{{kh}})}_{I\times\Omega} 
    \leq C  \Big(\norm{\na(\psi-\chi_{kh})}_{I\times\Omega}
    +\norm{\na (R_h \psi - \psi)}_{I\times\Omega} 
    +\norm{\na (\pi_k\psi - \psi)}_{I\times\Omega}
    +2\norm{\na (\psi - \chi_{kh})}_{I\times\Omega}
    \Big).
\end{equation*}
Collecting the $\norm{\na (\psi - \chi_{kh})}_{I\times\Omega}$ terms together then concludes the proof.
\end{proof}

\subsection{Specific spatial finite element spaces}

Here, we provide several examples of finite element spaces for solving the stationary biharmonic problem. 

\subsubsection{Conformal $C^1$ elements}\label{sec: C1_method}

It is natural to seek a finite element space $V_h\subset H^2_0(\Om)$, i.e., a conformal finite element approximation.  
In two dimensions, there are a number of such constructions, which usually require rather high polynomial order, we will focus only on $C^1$ Argyris elements, (cf. \cite[p.~273]{Ciarlet_1991} for other finite element methods for fourth order problems). Trivially, \Cref{ass:Vh_contained} is satisfied with $\theta=1$ and \Cref{ass:discrete_bilin_form} holds since the method is conformal and the discrete bilinear form coincides with the continuous one, see \Cref{remark: assumptions}. In the case of $\Om$ being convex, the standard duality argument and a priori error estimates show   
\begin{equation}\label{eq: error in H1 norm C1}
\|\psi-R_h \psi\|_{H^1(\Om)}\le Ch^2\|\psi\|_{H^{3}(\Om)}.
\end{equation}

\subsubsection{The $C^0$ interior penalty method}\label{sec: C0_method}
This method is very attractive since it uses Lagrange elements of degree $l\geq 2$.
To define the method, we need some additional notation. Let $\mce$ be the set of edges in $\mathcal{T}_h$.
For $e\in\mce$ and a suitably regular function $v$
we define the jump  $\jump{\frac{\pa v}{\pa n}}$ of the normal derivative of $v$ across an edge $e$  and the average of the second normal derivative $\avg{\frac{\pa^2 v}{\pa n^2}}$ of $v$ on an edge $e$  as follows. If $e\subset \Om$, 
we take $n_e$ to be one of the two unit vectors normal to $e$. Then $e$ is the common side of two triangles $T_+\in \mathcal{T}_h$ and $T_{-}\in \mathcal{T}_h$, where $n_e$ is pointing from $T_{-}$ to $T_{+}$.
Thus, on such $e$ we define
$$
\jump{\frac{\pa v}{\pa n}} = \frac{\pa v_{T_+}}{\pa n}\mid_e-\frac{\pa v_{T-}}{\pa n}\mid_e\quad\text{and}\quad 
\avg{\frac{\pa^2 v}{\pa n^2}} = \frac12 \left( \frac{\pa^2 v_{T_+}}{\pa n^2}\mid_e+\frac{\pa^2 v_{T-}}{\pa n^2}\mid_e\right).
$$
We note that the above definitions do not depend on the choice of $n_e$.
If $e\subset \pa\Om$, we take $n_e$ to be the unit normal pointing outside $\Om$ and
$$
\jump{\frac{\pa v}{\pa n}} =-\frac{\pa v}{\pa n_e}\quad \text{and}\quad\avg{\frac{\pa^2 v}{\pa n^2}} = \frac{\pa^2 v}{\pa n_e^2}.
$$
It is clear that $\jump{\frac{\pa v}{\pa n}},\avg{\frac{\pa^2 v}{\pa n^2}}$ are well defined for 
$v \in V_h$. Moreover, following the discussion in \Cref{remark: assumptions}, 
the space $X$ from \Cref{ass:discrete_bilin_form} is of the form $X = H^2_0(\Om) \cap H^{\frac52 +\varepsilon}(\Om)$ for some $\varepsilon >0$. Thus all considered jumps and averages are well defined for $v \in X$.
We thus define the bilinear form $a_h:(X+V_h)\times (X+V_h)\to \mathbb{R}$ by
\begin{equation}\label{eq:interior_penalty_ah}
\begin{aligned}
a_h(v,w)
&= \sum_{T\in \mct} \int_T D^2 v:D^2 w\, dx\\
& \quad +\sum_{e\in \mce}\int_e \left(\avg{\frac{\pa^2 v}{\pa n^2}}\jump{ \frac{\pa w}{\pa n}}
 +\jump{\frac{\pa v}{\pa n}}\avg{\frac{\pa^2 w}{\pa n^2}}
+\frac{\eta}{|e|} \jump{\frac{\pa v}{\pa n}}\jump{\frac{\pa w}{\pa n}}\right)ds,
\end{aligned}
\end{equation}
where 
$$
 D^2 v:D^2 w = \sum_{i,j=1}^2 \pa^2_{ij}v\ \pa^2_{ij} w,
$$
$|e|$ is the length of the edge $e$ and $\eta>0$ is a sufficiently large penalty constant. 
As discussed in \Cref{remark: assumptions}, 
for this choice of discretization, the results of \cite[Lemma 5]{BrennerS_SungLY_2005} show that for any 
polygonal domain $\Omega$, the \Cref{ass:discrete_bilin_form} holds for
$\theta = \frac12 - \varepsilon$, where $\varepsilon>0$ can be chosen arbitrarily small.
It is a standard result, that the \Cref{ass:Vh_contained} is also satisfied.
As mentioned above, the space $X$ in this case is given by $H^2_0(\Om)\cap H^{\frac52+\varepsilon}(\Om)$.
Hence, the Ritz projection $R_h$ introduced in \eqref{eq:def_ritz_projection} is well defined for any 
$\psi \in H^2_0(\Om)\cap H^{\frac52+\varepsilon}(\Om)$.
Moreover, if $\Omega$ is convex, we have the following error estimate 
(cf. \cite{BrennerS_SungLY_2005}, Theorem 5, with $\alpha=1$)
\begin{equation}\label{eq: error in H1 norm}
\|\psi-R_h \psi\|_{H^1(\Om)}\le Ch^2\|\psi\|_{H^{3}(\Om)}.
\end{equation}
Thus, in the above setting, for conformal $C^1$ and $C^0$ interior penalty methods, we obtain the following rates of convergence.
    \begin{theorem}\label{thm:convergence_orders_Czero}
        Let $\Omega$ be convex, $f \in L^2(I;H^{-1}(\Om))$ and $\psi_0 \in H^2_0(\Om)$. 
        Let further $\psi$, $\psi_{kh}$ solve the continuous and fully discrete time-dependent stream-function
        equations \eqref{eq:transient_biharmonic_weak} and \eqref{eq:fully_discrete_instationary}, respectively.
        Then there exists $C>0$ independent of $h$ and $k$ such that 
        \begin{equation*}
            \norm{\nabla (\psi-\psi_{kh})}_{L^2(I;L^2(\Om))} 
            \le C (k + h^2)\left(\norm{f}_{L^2(I;H^{-1}(\Om))} + \norm{\psi_0}_{H^2_0(\Om)}\right).
        \end{equation*}
    \end{theorem}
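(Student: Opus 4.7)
The plan is to apply the best approximation result \Cref{thm:error fully discrete} and bound each of the three terms on the right hand side using the parabolic regularity from \Cref{prop:weak_stability}. First, I would invoke \Cref{prop:weak_stability} to combine the given assumptions: since $\Omega$ is convex, $f \in L^2(I;H^{-1}(\Om))$, and $\psi_0 \in H^2_0(\Om)$, the solution $\psi$ of \eqref{eq:transient_biharmonic_weak} satisfies
$$\|\psi\|_{L^2(I;H^3(\Om))} + \|\na \partial_t \psi\|_{L^2(I\times\Om)} + \|\psi\|_{C(\bar I; H^2_0(\Om))} \le C\big(\|f\|_{L^2(I;H^{-1}(\Om))} + \|\psi_0\|_{H^2_0(\Om)}\big).$$
In particular, the $C(\bar I; H^2_0(\Om))$ regularity ensures that $R_h \psi \in C(\bar I; V_h)$ is well-defined (both in the $C^1$ and $C^0$ interior penalty cases), so that $\pi_k R_h \psi \in X^r_k(V_h)$ is an admissible choice of $\chi_{kh}$ in \eqref{eq:error fully discrete}.

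Next, I would estimate the second and third terms in \eqref{eq:error fully discrete} directly. For the Ritz projection term, the pointwise-in-time bound \eqref{eq: error in H1 norm} (or \eqref{eq: error in H1 norm C1} in the $C^1$ case), squared and integrated over $I$, gives $\|\na(\psi - R_h\psi)\|_{I\times\Om} \le Ch^2 \|\psi\|_{L^2(I;H^3(\Om))}$. For the time projection term, the standard approximation property of $\pi_k$ applied pointwise in space yields $\|\na(\psi - \pi_k \psi)\|_{I\times\Om} \le Ck \|\na \partial_t \psi\|_{L^2(I\times\Om)}$. Both right hand sides are controlled by the regularity bound above.

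For the $\chi_{kh}$ term in \eqref{eq:error fully discrete}, I would choose $\chi_{kh} = \pi_k R_h \psi$ and split via the triangle inequality
$$\|\na(\psi - \pi_k R_h \psi)\|_{I\times\Om} \le \|\na(\psi - R_h \psi)\|_{I\times\Om} + \|\na(R_h \psi - \pi_k R_h \psi)\|_{I\times\Om},$$
where the first term is handled by the previous paragraph, and the second is bounded via the $\pi_k$ approximation property by $Ck \|\na \partial_t R_h \psi\|_{L^2(I\times\Om)}$. Since $R_h$ is a time-independent spatial projection, one has $\partial_t R_h \psi = R_h \partial_t \psi$ in a suitable sense, and by $H^1$-stability of $R_h$ (which holds in both the $C^1$ Argyris and $C^0$ interior penalty settings), this is further bounded by $Ck \|\na \partial_t \psi\|_{L^2(I\times\Om)}$.

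Combining all three estimates together with the regularity bound from Step~1 yields the claimed $O(k+h^2)$ convergence rate. The main obstacle in the argument is justifying the bound on $\|\na \partial_t R_h \psi\|$: for the $C^0$ interior penalty method the Ritz projection $R_h$ is a priori defined only on $X = H^2_0(\Om) \cap H^{5/2+\varepsilon}(\Om)$, whereas $\partial_t \psi$ is guaranteed only in $L^2(I;H^1_0(\Om))$. This gap can be closed either by a density argument exploiting the time regularity of $\psi$, or by defining $\partial_t R_h \psi \in V_h$ directly as the solution of the differentiated finite-dimensional Galerkin system and invoking a discrete stability estimate uniform in $h$, which for $C^0$-IPM follows by combining \Cref{lemma: elliptic na Ah} with the equivalence of the broken $H^2$-norm and $\vertiii{\cdot}_h$ on $V_h$.
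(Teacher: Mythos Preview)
Your overall strategy matches the paper's—apply \Cref{thm:error fully discrete} and bound each term via \Cref{prop:weak_stability}—and your treatment of the $R_h$- and $\pi_k$-terms is correct. The gap is in the best-approximation term. With $\chi_{kh}=\pi_k R_h\psi$ you need $\|\nabla(R_h\psi-\pi_k R_h\psi)\|_{I\times\Om}\le Ck\|\nabla\partial_t(R_h\psi)\|_{I\times\Om}$ and then the uniform-in-$h$ bound $\|\nabla\partial_t(R_h\psi)\|\le C\|\nabla\partial_t\psi\|$, i.e.\ $H^1$-stability of the \emph{biharmonic} Ritz projection. This is not established in the paper and is not standard: under the available regularity one has only $\partial_t\psi\in L^2(I;H^1_0(\Om))$, whereas $R_h$ is defined only on $X$ (for $C^0$-IP, $X=H^2_0\cap H^{5/2+\varepsilon}$), so $R_h(\partial_t\psi)$ is not even defined. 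Your proposed fixes do not close this. A density argument still needs the very $H^1$-bound in question to pass to the limit. The ``differentiated Galerkin system'' has right-hand side $\partial_t\langle\mathcal{A}\psi,\varphi_h\rangle$, which is neither of the form $(\nabla g,\nabla\varphi_h)$ required by \Cref{lemma: elliptic na Ah} (and that lemma bounds $\|\nabla A_h w_h\|$, not $\|\nabla w_h\|$) nor controlled by the data, since no bound on $\partial_t\mathcal{A}\psi$ is available. (A related minor issue: for $C^0$-IP, $\psi\in C(\bar I;H^2_0)$ alone does not place $\psi(t)$ in $X$ for every $t$, so even the well-definedness of $\pi_k R_h\psi$ needs care.)

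The paper sidesteps all of this by choosing instead $\chi_{kh}=\Pi_h\pi_k\psi$, where $\Pi_h$ is the $H^1_0$-orthogonal projection onto $V_h$ from \eqref{eq:definition_Pih}. By \eqref{eq:stability_Pih} this projection is trivially $H^1$-stable, so $\|\nabla\Pi_h(\psi-\pi_k\psi)\|\le\|\nabla(\psi-\pi_k\psi)\|$ reduces to the already-handled $\pi_k$ term; and since $\Pi_h$ is precisely the Ritz projection for the Laplacian, $\|\nabla(\psi-\Pi_h\psi)\|\le Ch^2\|\psi\|_{L^2(I;H^3(\Om))}$ is a classical second-order estimate (using polynomial degree $l\ge 2$). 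This choice requires no time differentiability of any spatial projection and works uniformly for both the $C^1$ and $C^0$-IP discretizations.
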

    \begin{proof}
        By the discussion preceding this theorem, the Assumptions \ref{ass:Vh_contained} and
        \ref{ass:discrete_bilin_form} are fulfilled.
        Hence we need to estimate the terms presented in \Cref{thm:error fully discrete}. By the regularity 
        results derived in \Cref{prop:weak_stability}, 
        the stationary error estimate of the Ritz projection \eqref{eq: error in H1 norm}
        and the standard error estimate for the time projection $\pi_k$,we have 
        \begin{equation}\label{eq:error_time_projection}
            \norm{\na (\pi_k \psi- \psi)}_{L^2(I\times \Om)} 
            \le C k \norm{\pa_t \na \psi}_{L^2(I\times \Om)}
            \le C k \left(\norm{f}_{L^2(I;H^{-1}(\Om))} + \norm{\psi_0}_{H^2_0(\Om)}\right),
        \end{equation}
        \begin{equation}\label{eq:error_Ritz_projection}
            \norm{\na (R_h \psi- \psi)}_{L^2(I\times \Om)} 
            \le C h^2 \norm{\psi}_{L^2(I;H^3(\Om))}
            \le C h^2 \left(\norm{f}_{L^2(I;H^{-1}(\Om))} + \norm{\psi_0}_{H^2_0(\Om)}\right).
        \end{equation}
        In the best approximation term, following the proof of \cite[Theorem 12]{LeykekhmanD_VexlerB_2017a}
        we choose $\chi_{kh} = \Pi_h \pi_k \psi$, where $\Pi_h$ is the 
        $H^1_0$ projection introduced in \eqref{eq:definition_Pih}.
        This, together with the triangle inequality and the stability of
        $\Pi_h$ in $H^1_0(\Om)$, see \eqref{eq:stability_Pih}, yields
        \begin{align*}
            \norm{\na (\psi - \chi_{kh})}_{L^2(I\times \Om)} 
            &\le \norm{\na(\psi - \Pi_h \psi)}_{L^2(I\times \Om)}  
            + \norm{\na(\Pi_h(\psi - \pi_k \psi))}_{L^2(I\times \Om)}\\
            &\le \norm{\na(\psi - \Pi_h \psi)}_{L^2(I\times \Om)}  
            + \norm{\na(\psi - \pi_k \psi)}_{L^2(I\times \Om)}.
        \end{align*}
        We have already estimated the second term in \eqref{eq:error_time_projection}, and it is a 
        classical result from numerical analysis of the Poisson problem, that due to $l\ge 2$ and the
        regularity of $\psi$, the projection $\Pi_h$ satisfies the error estimate
        \begin{equation*}
            \norm{\na(\psi - \Pi_h \psi)}_{L^2(I\times \Om)} \le C h^2 \norm{\psi}_{L^2(I;H^3(\Om))}
            \le C h^2 (\norm{f}_{L^2(I;H^{-1}(\Om))} + \norm{\psi_0}_{H^2_0(\Om)}),
        \end{equation*}
        which concludes the proof.
    \end{proof}

Naturally, $\Curl(\psi_{kh})=(\partial_2 \psi_{kh}, -\partial_1 \psi_{kh})^T$ is discontinuous across triangulation boundaries. 
One way to address this issue is postprocessing \cite{BrennerS_SungLY_2005}, but this topic lies outside of the scope of this paper.

\section{Numerical results}\label{sec:numerical_experiments}
To demonstrate the theoretical error estimates derived in this work, we generate an example where the analytic 
solution $\uu$ to the time-dependent Stokes equations is known.
We choose $I = (0,1]$, $\Omega = [0,1]^2$ and calculate 
$\ww = \Curl(\Phi)$, where $\Phi\colon\R^2 \to \R, \Phi(x) = \sin(2\pi x_1)^2 \sin(2\pi x_2)^2$.
We then choose $\uu = \sin(2 \pi t)\ww$ and set
\begin{equation*}
    \g = \pa_t \uu - \Delta \uu.
\end{equation*}
By construction $\uu$ satisfies $\nabla \cdot \uu = 0$, $\uu(0) = \oo$ and $\uu|_{\pa \Omega} = 0$,
and thus solves the Stokes equations with right-hand side $\g$, together with an associated pressure $p=0$.
We first discretize the standard weak formulation of these equations by dG(0) in time and the MINI element
in space. 
The numerically observed errors for this setting are reported in \Cref{fig:error_analysis}.
\revB{ In the left-hand plot, we have set $h=2^{-8}$ constant and refined in time, 
  whereas in the right-hand plot, we have chosen $k=2^{-10}$ and have refined in space only.
}
We then proceed by modifying the right-hand side $\g$ by a gradient field, i.e., set the new right-hand side
to
\begin{equation*}
    \tilde \g = \g + 10^5 \begin{pmatrix} x_1^{-0.49} \\ 0 \end{pmatrix} \in L^\infty(I;L^2(\Om)^2).
\end{equation*}
As $\tilde \g$ and $\g$ only differ by a gradient field, they yield the same velocity solution $\uu$, 
and only the pressure changes to \revB{$\tilde p = p + 10^5 \cdot 0.51^{-1} \cdot x_1^{0.51}$}.
As for the discretization of the Stokes with mixed finite elements,
the velocity error is bounded by the sum of best approximation errors for 
velocity and pressure, one can observe in \Cref{fig:error_analysis}, that the error in the velocity increases.
\revB{In the left-hand plot this effect is only visible for fine $k$, since for coarse $k$,
  the discretization error in time dominates the additional spatial error.
  In the convergence plot with respect to $h$ on the right-hand side,
  the increase in the error can be observed at all discretization levels.
}
Lastly we set up a discrete stream-function formulation by taking 
$f = -\curl(\g)$ and solving:
\revB{Find $\psi_{kh} \in X^r_k(V_h)$ such that}
\begin{equation*}
    B_h(\psi_{kh},\varphi_{kh}) = \IOprod{f,\varphi_{kh}} \quad \textforall \varphi_{kh} \in X^r_k(V_h).
\end{equation*}
\revB{Analogous to the mixed discretization above, we use dG(0) in time, i.e., use $r=0$, but 
    in space we now use Lagrange finite elements of degree 2:
    $$V_h = \set{v_h \in C(\bar \Omega): v_h\vert_{K} \in \mathbb P_2(K) \quad \forall K \in \mathcal T_h, \quad v_h\vert_{\pa \Om} = 0}.$$
    The bilinear form $a_h(\cdot,\cdot)$ used in the definition of $B_h$ is in this case given by the interior
    penalty formulation \eqref{eq:interior_penalty_ah}.
}
Again we present the error $\uu - \Curl(\psi_{kh})$ in \Cref{fig:error_analysis}. 
Note that we observe exactly the error estimates derived in \Cref{thm:convergence_orders_Czero}.
Even though the absolute values of the errors are higher than the error for the MINI discretization
applied to $\g$, we obtain the same error values, if we applied the stream-function formulation to 
$\tilde \g$, as in the setting of this problem, $\curl$ is applied to the right-hand side, and the 
added gradient field vanishes in the right-hand side for $\psi_{kh}$.
Thus the same convergence behaviour is obtained for the modified right-hand side $\tilde \g$,
showing the pressure-robustness of this method.


\begin{figure}[h]
    \hspace{-20mm}
  \begin{minipage}{.40\linewidth}
  \begin{tikzpicture}[]
    \begin{axis}[legend pos=north west, legend style={draw=none},
                  xmin=0.001, xmax=0.1,
                  xmode = log, ymode = log,
                  xlabel={$k$},
                  ymin=0.01,ymax=1
                  ]
      \addplot[black,mark=star] table[x=k,y=error,col sep=comma]{evaluations/Stokes_kConvergence_MINI_analytic_instationary_v10.csv};
      \addplot[red,mark=diamond] table[x=k,y=error,col sep=comma]{evaluations/Stokes_kConvergence_MINI_analytic_instationary_GradField_300725.csv};
      \addplot[blue,mark=o] table[x=k,y=error,col sep=comma]{evaluations/Bilap_kConvergence_analytic_instationary_v10.csv};
      \addplot[black,dashed] table[x=k,y expr=10*x,col sep=comma]{evaluations/Stokes_kConvergence_MINI_analytic_instationary_v10.csv};
      \legend{{MINI ($\g$)},{MINI ($\tilde \g$)},{streamfct.},{$O(k)$}}
    \end{axis}
  \end{tikzpicture}
  \end{minipage}
  \hspace{15mm}
  \begin{minipage}{.40\linewidth}
  \begin{tikzpicture}[]
    \begin{axis}[legend pos=north west, legend style={draw=none},
                  xmode = log, ymode = log,
                  xmin=0.001, xmax=0.1,
                  xlabel={$h$},
                  ymin=0.001,ymax=10
                  ]
      \addplot[black,mark=star] table[x=h,y=error,col sep=comma]{evaluations/Stokes_hConvergence_MINI_analytic_instationary_v10.csv};
      \addplot[red,mark=diamond] table[x=h,y=error,col sep=comma]{evaluations/Stokes_hConvergence_MINI_analytic_instationary_GradField_300725.csv};
      \addplot[blue,mark=o] table[x=h,y=error,col sep=comma]{evaluations/Bilap_hConvergence_analytic_instationary_v10.csv};
      \addplot[black,dashed] table[x=h,y expr=700*x*x,col sep=comma]{evaluations/Stokes_hConvergence_MINI_analytic_instationary_v10.csv};
      \legend{{MINI ($\g$)},{MINI ($\tilde \g$)},{streamfct.},{$O(h^2)$}}
    \end{axis}
  \end{tikzpicture}
  \end{minipage}
  \centering
    \caption{Convergence of discretization with MINI element for different right-hand sides (black/red) and 
    interior penalty method for stream-function formulation (blue).}
    \label{fig:error_analysis}
\end{figure}

\revB{
\begin{figure}[h]
  \begin{minipage}{.40\linewidth}
  \begin{tikzpicture}[]
    \begin{axis}[legend pos=north west, legend style={draw=none},
                  xmin=0.0001, xmax=0.1,
                  xmode = log, ymode = log,
                  xlabel={$k=ch^2$},
                  ymin=0.001,ymax=10
                  ]
      \addplot[black,mark=star] table[x=k,y=error,col sep=comma]{evaluations/Stokes_khConvergence_MINI_analytic_instationary_revision.csv};
      \addplot[red,mark=diamond] table[x=k,y=error,col sep=comma]{evaluations/Stokes_khConvergence_MINI_analytic_instationary_revision_GradField.csv};
        \addplot[blue,mark=o] table[x=k,y=error,col sep=comma]{evaluations/Bilap_khConvergence_analytic_instationary_revision.csv};
      \addplot[black,dashed] table[x=k,y expr=10*x,col sep=comma]{evaluations/Bilap_khConvergence_analytic_instationary_revision.csv};
      \legend{{MINI ($\g$)},{MINI ($\tilde \g$)},{streamfct.},{$O(k+h^2)$}}
    \end{axis}
  \end{tikzpicture}
  \end{minipage}
  \centering
    \caption{Convergence rates for simultaneous space-time refinement of the
        MINI element for different right-hand sides (black/red) and 
    interior penalty method for stream-function formulation (blue).}
    \label{fig:error_simult_refinement}
\end{figure}
  The reduction of the convergence rate with respect to $h$ at fine discretizations for both the MINI discretization
  and the interior penalty approach can be explained by the time error dominating in this regime.
  Due to the first order term $O(k)$ compared to the second order $O(h^2)$, this indicates that 
  the chosen $k=2^{-10}$ is still too coarse, to observe the asyptotic convergence.
  To eliminate the possibility, that the effect is caused by other reasons, in \Cref{fig:error_simult_refinement} 
  $k$ and $h$ are refined simultaneously, with a coupling condition of $k = c h^2$.
  This leads to a theoretical convergence rate of $O(k+h^2) = O(h^2)$ which agrees with the numerically observed errors.
}

\appendix
\section{Sharpness of the time regularity. Counterexample}\label{sec:appendix}
In \Cref{thm:existence_uniqueness_standard_regularity} we especially proved that for $f \in L^2(I;H^{-2}(\Omega))$ and $\psi_0 \in H^1_0(\Omega)$ the solution $\psi$ of \eqref{eq:transient_biharmonic_weak} possesses the regularity $\partial_t \Delta \psi \in L^2(I;H^{-2}(\Omega))$ and that the estimate
\[
\|\partial_t \Delta \psi\|_{L^2(I;H^{-2}(\Omega))} 
+ \|\psi\|_{L^2(I;H^2_0(\Omega))} \le C\left(\|f\|_{L^2(I;H^{-2}(\Om))}+\|\na \psi_0\|_{L^2(\Om)}\right)
\]
holds. The constant $C$ here depends only on the domain $\Omega$. In this section, we demonstrate the sharpness of this result in the following sense: We provide a counterexample showing that such an estimate is not possible for $\|\partial_t \psi\|_{L^2(I;L^2(\Omega))}$ and not even for $\|\partial_t \psi\|_{L^2(I;H^{-1}(\Omega))}$. The latter regularity is claimed in \cite[Theorem 2.3]{KimQuasiGeostrophic2025} for a version of \eqref{eq:transient_biharmonic_weak} that includes certain nonlinear terms, under the same assumptions on the data as in \Cref{thm:existence_uniqueness_standard_regularity}.

We consider $I=(0,1)$, $\Omega=(0,1)$ and set
\begin{equation}\label{eq:def_psi_n}
\psi_n(t,x) := \sum_{k=1}^n a_{k,n}(t)\,\big(\cos(2\pi k x)-1\big),\qquad (t,x)\in I\times\Omega
\end{equation}
with the coefficients $a_{k,n}(t)$ to be chosen later. By construction, every $\psi_n$ fulfills the boundary conditions
\[
 \psi_n(t,0)=\psi_n(t,1)=\partial_x\psi_n(t,0)=\partial_x\psi_n(t,1)=0\quad (t\in I)
\]
and we have
\[
\partial_t\psi_n(t,x) = \sum_{k=1}^n a'_{k,n}(t)\big(\cos(2\pi k x)-1\big).
\]
We set the corresponding sequence of right-hand sides as
\begin{equation}\label{eq:app_def_fn}
f_n := -\partial_t\partial_x^2\psi_n + \partial_x^4\psi_n.
\end{equation}
Defining $\lambda_k = (2\pi k)^2$ we obtain by a direct calculation
\[
	f_n(t,x)=\sum_{k=1}^n \big(\lambda_k a'_{k,n}(t)+\lambda_k^2 a_{k,n}(t)\big)\,\cos(2\pi k x).
\]
In the following lemma we calculate certain norms of $f_n$ and $\psi_n$.
\begin{lemma}
Let $S_n(t)$ and $R_n(t)$ be defined as
\begin{equation}\label{eq:def:S_n_R_n}
S_n(t):=\sum_{k=1}^n a'_{k,n}(t) \quad \text{and} \quad R_n(t):=\sum_{k=1}^n \frac{a'_{k,n}(t)}{\lambda_k}.
\end{equation}
Then for the construction above there holds:
\begin{subequations}
\begin{align}
    \|f_n\|^2_{L^2(I;H^{-2}(\Omega))} &= \frac12\int_0^1 \sum_{k=1}^n \big|a'_{k,n}(t)+\lambda_k a_{k,n}(t)\big|^2\,dt,\label{eq:app_repr_1}\\
    \|\partial_t \partial^2_x \psi_n\|^2_{L^2(I;H^{-2}(\Omega))} &= \frac12\int_0^1 \sum_{k=1}^n |a'_{k,n}(t)|^2\,dt, \label{eq:app_repr_2}\\
    \|\partial_t \psi_n\|^2_{L^2(I;L^2(\Omega))} &= \int_0^1\left[\frac12\sum_{k=1}^n |a'_{k,n}(t)|^2 + |S_n(t)|^2\right]\,dt, \label{eq:app_repr_3}\\
    \|\partial_t \psi_n\|^2_{L^2(I;H^{-1}(\Omega))} &= \int_0^1\left[\sum_{k=1}^n \frac{1}{2\lambda_k}|a'_{k,n}(t)|^2 + \frac{1}{12}|S_n(t)|^2+2S_n(t)R_n(t)\right] \,dt. \label{eq:app_repr_4}
\end{align}
\end{subequations}
\end{lemma}
\begin{proof}
In order to calculate the $H^{-2}(\Omega)$ norm of a sum
\[
g(x) = \sum_{k=1}^n g_k\,\cos(2\pi k x)
\]
we solve the equation
\[
\varphi_g^{(4)} = g, \quad \varphi_g(0) = \varphi_g(1) = \varphi_g'(0) = \varphi_g'(1) = 0
\]
and obtain (cf. \Cref{lemm:h2_norm_equivalence})
\[
\|g\|_{H^{-2}(\Omega)} = \|\varphi''_g\|_{L^2(\Omega)}.
\]
A direct calculation gives
\[
\varphi''_g = \sum_{k=1}^n \frac{g_k}{\lambda_k}\,\cos(2\pi k x)
\]
and therefore by orthogonality of $\cos(2\pi k x)$ with respect to $L^2(\Omega)$ and the fact that $\norm{\cos(2\pi k x)}^2_{L^2(\Omega)} = \frac{1}{2}$ we get
\[
\|g\|_{H^{-2}(\Omega)}^2 = \half \sum_{k=1}^n \frac{g_k^2}{\lambda_k^2}.
\]
Applying this to $f_n(t,\cdot)$ given by \eqref{eq:app_def_fn} and integrating in time we obtain the first representation \eqref{eq:app_repr_1}.\\

Similarly, using
\[
 \partial_t\partial_x^2\psi_n(t,x) = -\sum_{k=1}^n \lambda_k a'_{k,n}(t)\,\cos(2\pi k x)
\]
we get \eqref{eq:app_repr_2}.
%
To obtain the representation for the $L^2$-norm of $\partial_t \psi_n$ of \eqref{eq:app_repr_3}, we rewrite
\[
\partial_t\psi_n(t,x) = \sum_{k=1}^n a'_{k,n}(t)\cos(2\pi k x) - S_n(t)
\]
and get, using the orthogonality of the $\cos$-terms and $(\cos(2\pi k x),1)_{L^2(\Omega)}=0$,
the appropriate expression.\\

To calculate the $H^{-1}(\Omega)$ norm of a sum
\[
g(x) = \sum_{k=1}^n g_k\left(\cos(2\pi k x)-1\right)
\]
we solve the equation
\[
-w''_g = g, \quad w_g(0) = w_g(1) = 0
\]
and obtain
\[
\|g\|_{H^{-1}(\Omega)} = \|w'_g\|_{L^2(\Omega)}.
\]
A direct calculation shows
\[
w'_g = \sum_{k=1}^n g_k \left(-\frac{1}{\sqrt{\lambda_k}}\sin(2\pi k x) + x -\frac{1}{2} \right)
\]
and for $g = \partial_t \psi_n$, i.e., choosing $g_k = a'_{k,n}(t)$, we get
\[
w'_g = -\sum_{k=1}^n \frac{a'_{k,n}(t)}{\sqrt{\lambda_k}}\sin(2\pi k x) + \left(x -\frac{1}{2} \right)S_n(t).
\]
Using the $L^2(\Omega)$ orthogonality of the $\sin$-terms and the fact that
\[
\left(\sin(2\pi k x),x-\frac{1}{2}\right)_{L^2(\Omega)} = -\frac{1}{\sqrt{\lambda_k}} \quad\text{and}\quad \left\|x-\frac{1}{2}\right\|_{L^2(\Omega)}^2 = \frac{1}{12}
\]
we obtain
\[
\|\partial_t \psi_n(t,\cdot)\|^2_{H^{-1}(\Omega)} = \frac{1}{2}\sum_{k=1}^n \frac{1}{\lambda_k} |a'_{k,n}(t)|^2 + \frac{1}{12} S_n(t)^2 + 2 S_n(t) \sum_{k=1}^n \frac{a'_{k,n}(t)}{\lambda_k}.
\]
By integrating in time and applying the definition of $R_n(t)$, we obtain the last representation 
\eqref{eq:app_repr_4}.
\end{proof}

As the next step we make a special choice for the coefficients $a_{k,n}(t)$, $k=1,2,\dots,n$ in \eqref{eq:def_psi_n}. We determine $a_{k,n}$ as the unique solution of the ODE
\[
 a'_{k,n}(t)+\lambda_k a_{k,n}(t)=r_n(t),\qquad a_{k,n}(0)=0.
\]
with 
\[
r_n(t) = n \chi_{[0,\varepsilon_n]}, \; \varepsilon_n = n^{-3}.
\]
By construction we get $\psi_n(0,x) =0$.

We directly obtain
\[
a_{k,n}(t) = 
\begin{cases}
	\frac{n}{\lambda_k}(1-e^{-\lambda_k t}), & 0 \le t \le \varepsilon_n\\
	\frac{n}{\lambda_k}(1-e^{-\lambda_k \varepsilon_n})e^{-\lambda_k(t-\varepsilon_n)}, & \varepsilon_n \le t \le 1
\end{cases}
\]
and thus
\[
a'_{k,n}(t) = 
\begin{cases}
	ne^{-\lambda_k t}, & 0 \le t < \varepsilon_n\\
	-n(1-e^{-\lambda_k \varepsilon_n})e^{-\lambda_k(t-\varepsilon_n)}, & \varepsilon_n < t \le 1
\end{cases}
.
\]
Please note that we have
\[
a'_{k,n}(t)  \
\begin{cases}
	>0, & 0 \le t < \varepsilon_n\\
	<0, & \varepsilon_n < t \le 1
\end{cases}
\]
for every $k=1,2,\dots,n$. Thus, for $S_n(t)$ and $R_n(t)$ defined in \eqref{eq:def:S_n_R_n} there holds
\begin{equation}\label{eq:SR_positive}
S_n(t) R_n(t) >0 \quad \text{for all } t \neq \varepsilon_n.
\end{equation}

\begin{theorem}
For the construction above there holds
\begin{subequations}
\begin{align}
        \|f_n\|^2_{L^2(I;H^{-2}(\Omega))} &= \frac{1}{2} \label{eq:app_est_1}\\
        \|\partial_t \partial^2_x \psi_n\|^2_{L^2(I;H^{-2}(\Omega))} &\le \frac{1}{2} +\pi^2 \label{eq:app_est_2}\\
        \|\partial_t \psi_n\|^2_{L^2(I;L^2(\Omega))} & \ge cn \label{eq:app_est_3}\\
        \|\partial_t \psi_n\|^2_{L^2(I;H^{-1}(\Omega))} &\ge cn \label{eq:app_est_4}
\end{align}
\end{subequations}
with a constant $c>0$ independent of $n$.
\end{theorem}
\begin{proof}
By construction there holds
\[
\|f_n\|^2_{L^2(I;H^{-2}(\Omega))} = \frac{1}{2} \int_0^1 \sum_{k=1}^n r_n^2(t)\, dt = \frac{n}{2} \int_0^1 r_n^2(t)\, dt = \frac{n}{2} \int_0^{\varepsilon_n} n^2 \,dt = \frac{n^3 \varepsilon_n}{2} = \frac{1}{2}.
\]
This provides the identity \eqref{eq:app_est_1}. For the second estimate \eqref{eq:app_est_2} we 
use the representation from \eqref{eq:app_repr_2} and obtain by splitting the integral
\begin{equation}\label{eq:app_split_integral}
\|\partial_t \partial^2_x \psi_n\|^2_{L^2(I;H^{-2}(\Omega))} =  \frac12\int_0^{\varepsilon_n} \sum_{k=1}^n |a'_{k,n}(t)|^2\,dt + \frac12\int_{\varepsilon_n}^1 \sum_{k=1}^n |a'_{k,n}(t)|^2\,dt.
\end{equation}
For the first sum of \eqref{eq:app_split_integral} we observe
\[
 |a'_{k,n}(t)| = n e^{-\lambda_k t} \le n
\]
and thus
\[
\frac12\int_0^{\varepsilon_n} \sum_{k=1}^n |a'_{k,n}(t)|^2\,dt \le \frac12 n \varepsilon_n n^2 = \frac12.
\]
To estimate the second sum in \eqref{eq:app_split_integral}, we calculate the integral and apply the inequality $1-e^{-x} \le x $ for $x=\lambda_k \varepsilon _n$ resulting in 
  \begin{align*}
	\int_{\varepsilon_n}^1 |a'_{k,n}(t)|^2 dt
	& =n^2(1-e^{-\lambda_k \varepsilon_n})^2 \int_{\varepsilon_n}^1 e^{-2\lambda_k(t-\varepsilon_n)} dt
	= \frac{n^2(1-e^{-\lambda_k \varepsilon_n})^2}{2 \lambda_k} [1 - e^{-2\lambda_k(1-\varepsilon_n)}]\\
	& \le \frac{n^2(\lambda_k \varepsilon_n)^2}{2 \lambda_k} = \frac12 n^{-4} \lambda_k = 2 \pi^2 n^{-4} k^2.
\end{align*}
Thus there holds 
\begin{equation*}
	\frac12 \sum_{k=1}^n \int_{\varepsilon_n}^1 |a'_{k,n}(t)|^2 dt \le \frac12 \sum_{k=1}^n 2 \pi^2 n^{-4} k^2
	= \pi^2 n^{-4} \sum_{k=1}^n k^2 \le \pi^2 n^{-4} n n^2 = \pi^2 n^{-1} \le \pi^2.
\end{equation*}
Putting terms together we obtain \eqref{eq:app_est_2}.\\

By the representation of $\|\partial_t \psi_n\|^2_{L^2(I;L^2(\Omega))}$ from \eqref{eq:app_repr_3}, we obtain
\[
\|\partial_t \psi_n\|^2_{L^2(I;L^2(\Omega))} \ge \int_0^1 S_n(t)^2 \ge \int_0^{\varepsilon_n} S_n(t)^2\, dt. 
\]
Moreover, for $\|\partial_t \psi_n\|_{L^2(I;H^{-1}(\Om))}$  we get with the representation \eqref{eq:app_repr_4}
and $\int_0^1 S_n(t) R_n(t) dt \ge 0$ due to \eqref{eq:SR_positive}, 
\[
\|\partial_t \psi_n\|^2_{L^2(I;H^{-1}(\Omega))} \ge  \frac{1}{12}\int_0^1 S_n(t)^2 \ge \frac{1}{12} \int_0^{\varepsilon_n} S_n(t)^2\, dt. 
\]
Thus, in order to show the last two statements \eqref{eq:app_est_3} and \eqref{eq:app_est_4},
it remains to estimate $\int_0^{\varepsilon_n} S_n(t)^2\, dt$ from below.
There holds for $0\le t \le \varepsilon_n$ and $\lambda_k \le \lambda_n$ with $e^{-x} \ge 1-x$ that
\[
e^{-\lambda_k t} \ge 1 - \lambda_k t \ge  1 - \lambda_n  \varepsilon_n  = 1-4 \pi^2 n^{-1}\ge \frac{1}{2}
\]
for $n$ large enough. Thus,
\[
S_n(t) = \sum_{k=1}^n a'_{k,n}(t) \ge n \sum_{k=1}^n e^{-\lambda_k t} \ge \frac{n^2}{2}
\]
for $0\le t \le \varepsilon_n$ and $n$ large enough. This results in
\[
 \int_0^{\varepsilon_n} S_n(t)^2\, dt \ge \frac{n^4}{4} \varepsilon_n = \frac{n}{4}.
\]
This completes the proof.
\end{proof}

The above considerations show that for $f \in L^2(I,H^{-2}(\Omega))$ and $\psi_0 = 0$ estimates like
\[
\|\partial_t \psi\|_{L^2(I;L^2(\Omega))} \le C \|f\|_{L^2(I,H^{-2}(\Omega))} \quad \text{or} \quad \|\partial_t \psi\|_{L^2(I;H^{-1}(\Omega))} \le C \|f\|_{L^2(I,H^{-2}(\Omega))}
\]
cannot hold.
\bibliography{stream_references}
\bibliographystyle{siam}

\end{document}